\def\es{\emptyset}
\def\eps{\varepsilon}
\def\Z{\mathbb{Z}}
\def\R{\mathbb{R}}
\def\dist{\mathrm{dist}}
\def\desc{\mathrm{desc}}
\def\conn{\mathrm{conn}}
\def\diam{\mathrm{diam}}
\def\cC{\mathcal{C}}
\def\bC{\mathbf{C}}
\def\cD{\mathcal{D}}
\def\bD{\mathbf{D}}
\def\sfE{\mathsf{E}}
\def\fL{\mathfrak{L}}
\def\fM{\mathfrak{M}}
\def\cR{\mathcal{R}}
\def\cT{\mathcal{T}}
\def\LE{\mathrm{LE}}
\def\E{\mathbf{E}}
\def\P{\mathbf{P}}
\def\E{\mathbf{E}}
\def\WSF{\mathsf{WSF}}
\def\FSF{\mathsf{FSF}}
\def\UST{\mathsf{UST}}
\def\tail{\mathrm{tail}}
\def\head{\mathrm{head}}
\def\Vrt{\mathrm{Vert}}
\newcommand{\eqn}[2]{\begin{equation}\label{#1}#2\end{equation}}
\newcommand{\eqnst}[1]{\begin{equation*}#1\end{equation*}}
\newcommand{\eqnspl}[2]{\begin{equation}\begin{split}\label{#1}%
   #2\end{split}\end{equation}}
\newcommand{\eqnsplst}[1]{\begin{equation*}\begin{split}%
   #1\end{split}\end{equation*}}
\theoremstyle{plain}
\newtheorem{theorem}{Theorem}
\newtheorem{lemma}[theorem]{Lemma}
\newtheorem{proposition}[theorem]{Proposition}
\theoremstyle{definition}
\newtheorem{definition}{Definition}
\theoremstyle{remark}
\newtheorem{remark}{Remark}
\newtheorem{open}{Open Question}
\begin{document}

\title{Anchored burning bijections on finite and infinite graphs}
\author{
Samuel L. Gamlin\thanks{Department of Mathematical Sciences, University of Bath,
Claverton Down,
Bath, BA2 7AY,
United Kingdom,
Email: {\tt S.Gamlin@bath.ac.uk}} \and
Antal A. J\'{a}rai\thanks{Department of Mathematical Sciences, University of Bath,
Claverton Down,
Bath, BA2 7AY,
United Kingdom,
Email: {\tt A.Jarai@bath.ac.uk}}
}

\maketitle

\abstract{Let $G$ be an infinite graph such that each tree in the wired uniform spanning forest on $G$ has one end almost surely. On such graphs $G$, we give a family of continuous, measure preserving, almost one-to-one mappings from the wired spanning forest on $G$ to recurrent sandpiles on $G$, that we call anchored burning bijections. In the special case of $\Z^d$, $d \ge 2$, we show how the anchored bijection, combined with Wilson's stacks of arrows construction, as well
as other known results on spanning trees, yields a power law upper bound on the rate of convergence to the sandpile measure along any exhaustion of $\Z^d$. We discuss some open problems related to these findings.}

\medbreak

{\bf Key words:} abelian sandpile, uniform spanning tree, loop-erased random walk, Wilson's algorithm, burning algorithm, wired spanning forest.

\section{Introduction}

The abelian sandpile model is a stochastic particle model defined on a graph by a cellular automaton. Also known as the chip-firing game \cite{Holroyd(08)}, this model has shown interesting connections to a variety of different areas. In \cite{Bak(87)} the idea of self-organized criticality was introduced and
the sandpile model was used as a simple example of the properties sought. Dhar \cite{Dhar(90)} was the first to study the model in its own right, realising that many of its properties can be computed exactly, and
hence it has the capacity to demonstrate important underlying principles of self-organized criticality. See the surveys
\cite{Dhar(06),Redig(06),Jarai(14)} for background.

Let $G = (V \cup \{ s \}, \sfE)$ be a finite, connected multigraph, with a
distinguished vertex $s$, called the sink. A \emph{sandpile configuration}, that we usually denote by $\eta$, consists of assigning an integer
number of particles $\eta(v) \in \{ 0, 1, 2, \ldots\}$ to every non-sink vertex $v \in V$, The sandpile $\eta$ is \emph{stable}, if $\eta(v) \in \{ 0, 1, \ldots, \deg_G(v)-1 \}$,
for all $v \in V$, where $\deg_G(v)$ is the degree of the vertex $v$ in the graph $G$.

The dynamics of the model consist of two ingredients. The first is called \emph{toppling}. This occurs when a vertex has at least as many particles as its degree. For such a vertex $v$, its height is reduced by its degree and one particle is sent along every edge incident with $v$ (i.e.~vertices with multiple edges connecting them to $v$ receive more than one particle). Particles reaching the sink are lost
(i.e.~we do not keep track of them). The toppling of $v$ is summarised by the mapping $\eta(w) \to \eta(w) - \Delta_{v,w}$, $w \in V$, using the Laplacian matrix
\begin{align*}
\Delta_{v,w}=
\begin{cases}
\deg_G(w) & \text{ if $w = v$;} \\
-a_{v,w}  & \text{ if $w \not= v$;}
\end{cases}
\end{align*}
where $a_{v,w}$ is the number of edges connecting vertices $v, w \in V$.
Starting from any sandpile $\eta$, carrying out all possible topplings (in any order) yields a unique stable sandpile $\eta^\circ$ \cite{Dhar(90)}.

The second ingredient of the model is \emph{particle additions}. Given a stable sandpile $\eta$,
we add a particle at a randomly chosen vertex $v \in V$, and then stabilize via
topplings, if necessary. Successive particle additions yield a Markov chain on the set of stable sandpiles. We denote the set of recurrent states of this Markov chain by $\cR_G$, and by $\nu_G$ the unique stationary distribution, that is the uniform distribution on $\cR_G$ \cite{Dhar(90)}. The following combinatorial characterization of $\cR_G$ follows from \cite{Dhar(90),MD(92)} (see also \cite{Holroyd(08)}):
\eqnst
{ \cR_G
  = \left\{ \eta \in \prod_{x \in V} \{ 0, \ldots, \deg_G(x)-1 \} :
    \text{$\eta$ is ample for all $\es \not= F \subset V$} \right\}. }
Here $\eta$ is called \emph{ample for $F$}, if there exists $x \in F$
such that $\eta(x) \ge \deg_F(x)$.

An important tool for investigating sandpile configurations is the burning algorithm of Dhar \cite{Dhar(90)}, that we now describe.
Given $\eta \in \cR_G$, at time $0$ we declare the sink to be ``burnt''. Following this, we successively ``burn'' vertices
where $\eta(x)$ is at least as much as the number of edges leading from $x$ to any unburnt neighbours. More precisely, we set
\eqnsplst
{ B_0
  := \{ s \}, \qquad\qquad
  U_0
  := V, }
and for $j \ge 1$ we inductively set:
\eqnsplst
{ B_j
  := \left\{ v \in U_{j-1} : \eta(v) \ge \deg_{U_{j-1}}(v) \right\}, \qquad\qquad
  U_j
  := U_{j-1} \setminus B_j. }
Here $B_j$ (resp.~$U_j$), are the sets of vertices burnt, 
(resp.~unburnt), at time $j$.
Since $\eta$ is ample for any non-empty $U_{j-1}$, we have $U_j = \es$ eventually, 
at which time the algorithm terminates.

Majumdar and Dhar \cite{MD(92)}, following the above burning algorithm, 
constructed a bijection $\varphi_G : \cR_G \to \cT_G$, 
where $\cT_G$ is the set of spanning trees of $G$. 
The map $\varphi_G : \eta \mapsto t$, that arises as a special case 
of the bijections introduced in Section \ref{sec:anchored-bijection},
can be defined as follows. Fix for each $v \in V$ an ordering $\prec_v$ 
of the oriented edges $\{ f : \tail(f) = v \}$. If $v \in B_j$, let 
\eqnspl{e:mvFv}
{ m_v
  &:= \Big| \Big\{ f : \tail(f) = v,\, \head(f) \in \bigcup_{j' < j} B_{j'} \Big\} \Big|, \\
  F_v
  &:= \left\{ e : \tail(e) = v,\, \head(e) \in B_{j-1} \right\}. }
Due to the burning rule, we have
\eqn{e:etav}
{ \eta(v)
  = \deg(v) - m_v + \ell \quad \text{ for some $0 \le \ell < |F_v|$.} }
With $\ell$ as above, let $e_v \in F_v$ be that edge $e$ such that $| \{ f \in F_v : f \prec_v e \} | = \ell$. 
Then we place, for each $v \in V$, the directed edge $e_v$ in $t$, and forget the orientation.
Observe that the burning time of a vertex $v \in V$ equals $\dist_t(v,s)$, where
$\dist_t(\cdot,\cdot)$ is graph distance in $t$.

The image of $\nu_G$ under $\varphi_G$
is the uniform spanning tree measure $\UST_G$, i.e.~the uniform distribution on $\cT_G$. 
The burning bijection has been very fruitful in proving things about the sandpile model; 
see e.g.~\cite{Pr(94),JPR(06),Athreya(04),JR(08)}. It is natural to look for an extension 
of the burning bijection to infinite graphs, and this leads to some highly non-trivial questions. 
The main difficulty is that on finite graphs the burning algorithm starts from the sink, 
so the analogous process on infinite graphs should start from infinity. Our paper will be 
concerned with a particular way of overcoming this problem. However, as we outline below, 
some very natural questions remain open.

From now on, let $G = (V,\sfE)$ be a locally finite, connected, infinite graph. Given an 
exhaustion by finite subgraphs: $V_1 \subset V_2 \subset \dots \subset V$,
$\cup_{n=1}^\infty V_n = V$, let $G_n = (V_n \cup \{ s \}, \sfE_n)$ denote the 
\emph{wired graph} obtained by identifying the vertices in $V \setminus V_n$, 
that becomes the sink $s$, and removing loop-edges at $s$. Note that there is a natural 
identification between $\sfE_n$ and those edges in $\sfE$ that have an endvertex in $V_n$. 
We denote by $\WSF$ the weak limit of the measures $\UST_{G_n}$ \cite{LPbook}, called the 
\emph{wired uniform spanning forest measure} on $G$. It is well-known, and easy to see, 
that $\WSF$ concentrates on spanning subgraphs of $G$ all of whose components are 
infinite trees. We say that an infinite tree has \emph{one end}, if any two infinite
self-avoiding paths in the tree have a finite symmetric difference.
We will assume that $G$ satisfies the following condition:
\eqn{e:one-end}
{ \text{$\WSF$-a.s.~all components have one end.} }
While, in general, condition \eqref{e:one-end} is difficult to verify, it is known to hold on a large class of graphs, including $\Z^d$, $d \ge 2$; see \cite{Pem(91),BLPS(01),LMS(08),LPbook}.
We denote \eqnst
{ \cT
  := \left\{ \parbox{6.7cm}{spanning subgraphs of $G$ such that all
     components are infinite one-ended trees} \right\}. } The counterpart of $\cT$ for the sandpile model will be \eqnst
{ \cR
  := \left\{ \eta \in \prod_{x \in V} \{0, \dots, \deg_G(x)-1 \} :
     \text{$\eta$ is ample for all finite $\es \not= F \subset V$} \right\}, }
that we call the \emph{recurrent configurations on $G$}.

Athreya and J\'{a}rai \cite{Athreya(04)} considered the case of $\Z^d$, $d \ge 2$,
$V_n = [-n,n]^d \cap \Z^d$, and they showed that $\nu_{G_n}$ has a weak limit $\nu$
that concentrates on $\cR$. When $2 \le d \le 4$ the argument is particularly transparent. 
It was shown by Pemantle \cite{Pem(91)} that when $2 \le d \le 4$, the measure $\WSF$ 
concentrates on the set
\eqnst
{ \cT^\conn
  := \left\{ \text{one-ended spanning trees of $\Z^d$} \right\}
  \subset \cT. } 
In this case the limiting sandpile measure $\nu$ is exhibited as the
image of $\WSF$ under a map $\psi : \cT^\conn \to \cR$. 
Here $\psi$ is defined essentially by inverting the relationships 
\eqref{e:mvFv}--\eqref{e:etav}, that can be made sense of in $\Z^d$ 
for $t \in \cT^\conn$. Namely, fix $t \in \cT^\conn$ and $v \in \Z^d$.
%Let $\dist_t(\cdot, \cdot)$ denote the graph distance in $t$.
Let $v^*$ denote the unique vertex such that all infinite paths starting at a
neighbour of $v$ pass through $v^*$, and $v^*$ is nearest to $v$ with respect
to $\dist_t$ (such vertex exists because $t \in \cT^\conn$). Orient all edges 
of $t$ towards infinity (this makes sense, because $t$ has one end). 
%Let $e_1 \prec \dots \prec e_{2d}$ be a fixed ordering
%of the oriented egdes of $\Z^d$ such that $\tail(e_i) = v$. 
Let
\eqnsplst
{ m'_v
  &:= \Big| \Big\{ f : \tail(f) = v,\, \dist_t(\head(f),v^*) < \dist_t(v,v^*) \Big\} \Big|, \\
  F'_v
  &:= \left\{ e : \tail(e) = v,\, \dist_t(\head(e),v^*) = \dist_t(v,v^*)-1 \right\}. }
Enumerate $F'_v$ as $e_0 \prec_v \dots \prec_v e_{|F'_v|-1}$, and let 
$0 \le \ell' < |F'_v|$ be the unique index such that $e_{\ell'} \in t$.
Then we set
\eqnst
{ \psi(t)(v)
  := \eta(v)
 : = 2d - m'_v + \ell', \quad v \in \Z^d. }

It is not difficult to see that $\psi$ is continuous on $\cT^\conn$. 
(In a certain sense, $\psi$ is the limit of the inverse bijections 
$\psi_{G_n} := \varphi_{G_n}^{-1} : \cT_{G_n} \to \cR_{G_n}$.)
Moreover, $\psi$ is equivariant under translations of $\Z^d$, if the orderings
$\{ \prec_v : v \in \Z^d \}$ are chosen equivariant. It is tempting to 
conjecture that $\psi$ is almost 
one-to-one, i.e.~injective up to sets of measure $0$. We do not have a proof of this.

\begin{open}
\label{op:one-to-one}
Is $\psi$ almost one-to-one in the case of $\Z^d$, $2 \le d \le 4$?
\end{open}

When $d > 4$, it turned out to be necessary to add extra randomness to the $\WSF$ in order to construct $\nu$ \cite{Athreya(04)}, so there
is no natural mapping $\cT \to \cR$, a priori.

The main result of this paper is the construction of a family of measure preserving 
mappings between spanning forests and sandpiles that are almost one-to-one.
Our mappings can be constructed on general infinite graphs $G$ satisfying condition 
\eqref{e:one-end}, in particular, also on some non-transitive graphs. 
In this general setting, J\'{a}rai and Werning \cite{JW(12)} showed that $\nu_{G_n}$ 
converges weakly to a limit $\nu$, that is independent of the exhaustion.  
Our construction is a natural extension of the one in \cite{JW(12)}, that in turn was 
based on an observation of Majumdar and Dhar \cite{MD(91)} and Priezzhev \cite{Pr(94)}.
In general, when $G = (V,\sfE)$ is transitive, our mappings will \emph{not} be invariant 
under all graph automorphism.

\begin{definition}
\label{defn:anchor}
An \emph{anchor} is a sequence $\cD = \{ D_1, D_2, \ldots \}$ of finite subsets of vertices such that \begin{itemize}
\item[(i)] $D_1 \subset D_2 \subset \dots$ and $\cup_{k \ge 1} D_k = V$;
\item[(ii)] $D_k$ is simply connected for each $k \ge 1$, i.e.~all connected components of $V \setminus D_k$ are infinite. \end{itemize}
\end{definition}

In Section \ref{sec:anchored-bijection} we will associate to any anchor
$\cD$ a particular burning rule. That is, for any finite $\Lambda \subset V$ and configuration $\eta \in \cR_\Lambda$ we define burning times
$\tau^\cD(x,\Lambda; \eta)$, $x \in \Lambda$ in such a way that at each time
only vertices that are burnable in the sense of Dhar \cite{Dhar(90)} are burnt.
The advantage of our rule will be that it is easy to pass to the limit $\Lambda \uparrow V$, i.e.~we can define a consistent set of burning times
$\tau^\cD(x;\eta) \in \Z$ for $\nu$-a.e.~$\eta \in \cR$. The reason for requiring (ii) in Definition \ref{defn:anchor} is that for general
$D_k$, our burning rule will be identical if we replace $D_k$ by the
smallest simply connected set containing it.

\begin{theorem}
\label{thm:anchored-bijection}
Assume that the infinite graph $G$ satisfies condition \eqref{e:one-end}.
The burning rule arising from any anchor $\cD$ defines a continuous, measure preserving, injective map $\psi_\cD$ from $(\cT,\WSF)$ to $(\cR,\nu)$.
\end{theorem}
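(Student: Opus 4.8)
The plan is to verify in turn the four assertions contained in the statement — that $\psi_\cD$ maps $\cT$ into $\cR$, that it is continuous, that it pushes $\WSF$ forward to $\nu$, and that it is injective — using the finite-volume burning bijections $\varphi_{G_n}$ and their inverses $\psi_{G_n} := \varphi_{G_n}^{-1}$ as the main bridge, together with the consistency of the anchored burning times under $\Lambda \uparrow V$. I would begin with well-definedness. Fix $t \in \cT$. By \eqref{e:one-end} every component of $t$ is a one-ended tree, so each vertex $v$ has a well-defined oriented path toward infinity and hence a well-defined parent edge; this lets me form the local quantities $m'_v$, $F'_v$ and the selection index $\ell'$ in the manner of the definition of $\psi$ recalled above (the anchor $\cD$ playing the role of the distinguished vertex $v^*$), and to set $\eta(v)$ by inverting \eqref{e:etav}. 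I would then check $\eta \in \cR$, i.e. that $\eta$ is ample for every finite nonempty $F \subset V$: the cleanest route is to compare the restriction of $\eta$ to a large anchor set $D_k$ with the image $\psi_{G_n}$ of the finite bijection, and to invoke that the anchored burning burns only vertices burnable in Dhar's sense while burning every vertex in finite time, so the finite burning criterion forces ampleness on each finite $F$.

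Next, continuity. I would show that for each fixed $v$ the value $\eta(v) = \psi_\cD(t)(v)$ depends on $t$ only through finitely many edges, so that $t \mapsto \eta(v)$ is locally constant on $\cT$ in the product topology. The point is that $m'_v$, $F'_v$ and $\ell'$ are determined by the edges incident to $v$ together with the comparison of the tree-positions of $v$ and of its neighbours relative to the anchor; because $t$ is one-ended, the infinite paths issuing from the neighbours of $v$ merge after finitely many steps, and this merging — hence all the relevant comparisons — is unaffected by any modification of $t$ that leaves a sufficiently large ball around $v$ intact. Continuity of $\psi_\cD$ then follows coordinatewise.

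The heart of the matter is measure preservation, $(\psi_\cD)_* \WSF = \nu$, and this is the step I expect to be the main obstacle. The strategy is to realise $\psi_\cD$ as the limit of the finite inverse bijections. For each $n$, $\psi_{G_n}$ pushes $\UST_{G_n}$ forward to $\nu_{G_n}$; by \cite{JW(12)} (and \cite{Athreya(04)} on $\Zd$) one has $\nu_{G_n} \Rightarrow \nu$, while by definition $\UST_{G_n} \Rightarrow \WSF$. Fix a finite window $W$ and a cylinder event $A$ depending only on $\{\eta(v) : v \in W\}$. I would prove a stabilisation statement: for $\WSF$-a.e. $t$ and for $n$ large, the anchored burning times $\tau^\cD(x, G_n; \cdot)$ governing the vertices of $W$ coincide with their limits $\tau^\cD(x; \cdot)$, so that $\psi_{G_n}$ and $\psi_\cD$ agree on $W$. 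Granting this, the chain $\nu(A) = \lim_n \nu_{G_n}(A) = \lim_n \UST_{G_n}(\psi_{G_n}^{-1}A) = \WSF(\psi_\cD^{-1}A)$ closes, the last equality using the continuity of $\psi_\cD$ (so that $\psi_\cD^{-1}A$ is a $\WSF$-continuity set) together with the stabilisation. The delicate part is exactly this stabilisation: the one-end hypothesis \eqref{e:one-end} guarantees that the region of $t$ on which $\eta|_W$ depends is finite almost surely, but to exchange the two limits one needs this region to be \emph{tight}, i.e. that with probability close to one the burning relevant to $W$ has already settled by scale $n$, uniformly in $n$. Establishing such uniform control — equivalently, that the burning "reaches $W$ from infinity" within a bounded region with high probability — is where the consistency of the anchored burning times and the properties of the $\WSF$ under \eqref{e:one-end} are used most heavily.

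Finally, injectivity follows by exhibiting an explicit left inverse on $\psi_\cD(\cT)$: given $\eta = \psi_\cD(t)$, run the anchored burning on $\eta$ to recover the times $\tau^\cD(\cdot;\eta)$, and reconstruct the parent edge $e_v$ of each vertex by the selection rule underlying \eqref{e:mvFv}--\eqref{e:etav}; the resulting spanning subgraph is precisely $t$. Since this reconstruction is deterministic and inverts $\psi_\cD$, the map is injective.
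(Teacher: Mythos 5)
Your outline names the right four assertions, but there are two genuine gaps, and the first is at the level of the definition of the map itself.

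First, the construction. You propose to define $\psi_\cD(t)(v)$ by the vertex-local rule $m'_v$, $F'_v$, $\ell'$, ``with the anchor playing the role of the distinguished vertex $v^*$''. That rule is only meaningful on $\cT^\conn$, where the tree is connected and one-ended so that a common ancestor $v^*$ of the neighbours of $v$ exists. On a general element of $\cT$ the forest may have many components, neighbours of $v$ can lie in different trees, and there is no such $v^*$; this is exactly why the paper says there is no natural map $\cT \to \cR$ a priori. The anchored bijection is a different object: in a finite window it burns in phases, Phase $i$ forbidding $D_{K-i+1}$ from burning, and $\eta(v)$ is read off from the resulting burning times. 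The tree enters through the descendant sets $W_k = \desc_t(D_k)$ (which are finite by the one-end hypothesis), via the identity $U^{(i+1)}_0 = \desc(D_{K-i+1})$ (Lemma \ref{lem:U=desc}) and the consistency of the induced maps $\psi_{W,k}$ on the auxiliary graphs $G^*_{W,k}$ (Lemma \ref{lem:consistent}). Without this phase/descendant structure the map you describe is not defined on $\cT$, and the subsequent continuity and injectivity arguments have nothing to act on. (Your injectivity sketch also quietly assumes that the sets $W_k$ can be recovered from $\eta$ alone by running the burning with $D_k$ forbidden; that recovery is Remark \ref{rem:burning} and is the actual content of the paper's injectivity proof.)

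Second, measure preservation. You correctly identify that the whole difficulty is a stabilisation statement, but you do not supply it, and your notation $\psi_{G_n} := \varphi_{G_n}^{-1}$ points at Dhar's \emph{original} burning bijection. For that bijection the stabilisation you need is precisely Open Question \ref{op:burn-couple}, which the paper cannot answer and which is known to \emph{fail} on $\Z^d$ for $d>4$; so as written the argument would break exactly where you flag the difficulty. The paper's route avoids almost-sure stabilisation of burning times entirely: it conditions on the event $\{W_{\Lambda,k} = W\}$, observes that the conditional law of $\eta_W$ is $\nu_{G^*_{W,k}}$, independent of $\Lambda$ and equal to the $\psi_{W,k}$-image of $\UST_{G^*_{W,k}}$, hence to a conditional $\WSF$-probability; the only probabilistic input then needed is tightness of the family $\{W_{\Lambda,k}\}$, which follows directly from the one-end condition \eqref{e:one-end}. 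Replacing your a.s.-stabilisation step by this conditional decomposition is what actually closes the argument.
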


The precise meaning of ``defines'' will become clear in Section \ref{sec:anchored-bijection}, 
where we introduce the anchored bijection and the map $\psi_\cD$. Indeed, the anchor will serve to 
prescribe a ``preferred direction'' for the burning of configurations on $V$ starting from infinity. 
The following question complements Open Question \ref{op:one-to-one}.

\begin{open}
\label{op:d>4}
For $\Z^d$, $d > 4$, is there a continuous measure preserving map from $(\cT,\WSF)$ to $(\cR,\nu)$ that is equivariant with respect to translations?
\end{open}

Open Questions \ref{op:one-to-one} and \ref{op:d>4} are connected to a result of Schmidt and Verbitskiy \cite{SV(09)}. They constructed, for any $d \ge 2$, a family of $\Z^d$-equivariant continuous surjective mappings from $\cR$ onto the so called harmonic model, i.e.~functions from $\Z^d$ to the unit circle that are harmonic modulo $1$. The image of $\nu$ under their maps is the unique measure of maximum entropy of the harmonic model \cite[Theorem 5.9]{SV(09)}.

As an application of the anchored bijection, we show that combined with Wilson's stacks of arrows construction \cite{W(96)} it yields a coupling between $\nu_{G_n}$ and $\nu$ that we can analyze on $\Z^d$, $d \ge 2$. This leads to a power law upper bound on the rate of convergence of $\nu_{G_n}$ to $\nu$.

\begin{theorem}
\label{thm:power-law}
Let $d \ge 2$ and let $\Lambda \subset \Z^d$ be finite. Let $N$ be the radius of the largest
ball centred at the origin that is contained in $\Lambda$. There exists $\alpha = \alpha(d) > 0$
such that for any $1 \le k < N$ and any cylinder event $E$ depending only on the heights within distance $k$ of the origin we have
\eqn{e:power-law}
{ |\nu_\Lambda(E) - \nu(E)|
  \le C(k,d) N^{-\alpha}. }
\end{theorem}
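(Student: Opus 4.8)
The strategy is to transport both measures to spanning-forest measures via the burning bijection, to show that the height configuration near the origin is an \emph{asymptotically local} function of the forest, and then to couple $\UST_\Lambda$ and $\WSF$ on a mesoscopic ball using Wilson's algorithm. First I would use the anchored bijection of Theorem~\ref{thm:anchored-bijection} with a fixed anchor $\cD$, together with its finite-volume analogue (the Majumdar--Dhar bijection on $G_\Lambda$ realised through the same anchored burning rule), to write $\nu(E) = \WSF(\tilde E)$ and $\nu_\Lambda(E) = \UST_\Lambda(\tilde E_\Lambda)$, where $\tilde E := \psi_\cD^{-1}(E)$ and $\tilde E_\Lambda$ is the corresponding pre-image on $G_\Lambda$; both are events on forests determined by the heights within distance $k$ of the origin. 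By the definition of the bijection (cf.\ the formula $\psi(t)(v) = 2d - m'_v + \ell'$ for $2 \le d \le 4$), the height at $v$ is a function of the oriented tree edges incident to a neighbourhood of $v$ together with the relative burning times of $v$ and its graph-neighbours; the only non-local ingredient is the branch point $v^*$ (and, for $d \ge 5$, the anchored burning order across distinct trees).

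The heart of the argument is a localisation estimate. For $R \gg k$ write $B_R := \{ x \in \Zd : |x| \le R \}$ and define the good event $\mathcal G_R$ on which, for every $v \in B_k$, the branch point $v^*$ and the portions of the tree paths relevant to computing $m'_v, F'_v$ are contained in $B_R$ (with the analogous anchored statement when $d \ge 5$). On $\mathcal G_R$ the indicator of $\tilde E$ is measurable with respect to the restriction of the forest to the edges inside $B_R$, since all data entering the height formula then lie inside $B_R$. I would next bound $\WSF(\mathcal G_R^{\,c})$ and $\UST_\Lambda(\mathcal G_R^{\,c})$ by $C(k,d)\, R^{-\alpha_1}$. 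This is where quantitative one-endedness enters: because neighbouring vertices' paths to infinity coalesce almost surely, the probability that they fail to do so within $B_R$ decays polynomially in $R$, and this decay is supplied by loop-erased random walk estimates on $\Zd$ (escape and coalescence probabilities for the branches emanating from $B_k$).

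Next I would couple $\UST_\Lambda$ and $\WSF$ so that their restrictions to the edges in $B_R$ agree with high probability. Running Wilson's algorithm rooted at the sink on $G_\Lambda$ and rooted at infinity on $\Zd$, driven by the \emph{same} simple random walk trajectories processed in the same order, the two loop-erased paths coincide until a driving walk first reaches $\Zd \setminus \Lambda$; the restrictions to $B_R$ can therefore differ only if some driving walk started in $B_R$ travels out to $\partial\Lambda$ (distance $\gtrsim N$) and returns to $B_R$. By standard hitting estimates for simple random walk this has probability at most $C(d)\,(R/N)^{\alpha_2}$ (with the usual logarithmic correction when $d=2$), so the two restrictions to $B_R$ can be coupled to coincide off an event of this probability.

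Finally I would combine the three estimates. On the intersection of $\mathcal G_R$, its analogue under $\UST_\Lambda$, and the coupling-success event, the forest restrictions to $B_R$ agree and both lie in $\mathcal G_R$, so $\bfone_{\tilde E} = \bfone_{\tilde E_\Lambda}$; hence, for $N$ large enough (absorbing the bounded range of $N$ into the constant), $|\nu_\Lambda(E) - \nu(E)| \le C(k,d)\,R^{-\alpha_1} + C(d)\,(R/N)^{\alpha_2}$. Choosing $R = N^{\theta}$ with $\theta = \alpha_2/(\alpha_1+\alpha_2)$ balances the two terms and yields \eqref{e:power-law} with $\alpha = \alpha(d) = \alpha_1\alpha_2/(\alpha_1+\alpha_2) > 0$. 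I expect the main obstacle to be the localisation estimate of the second paragraph: making one-endedness quantitative with a genuine power-law rate, uniformly over the structures feeding into the height formula, and in particular handling the two delicate regimes $d=2$ (recurrence and logarithmic corrections) and $d \ge 5$ (infinitely many trees, so that the relative burning times must be controlled through the anchored burning rule rather than through a single coalescence point). The Wilson coupling of the third paragraph is comparatively routine once the random walk estimates are in place.
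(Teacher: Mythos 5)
Your high-level architecture --- transport via the burning bijection, a localisation estimate confining the relevant data to a mesoscopic ball $B_R$, a Wilson coupling of $\UST_{G_\Lambda}$ and $\WSF$ on $B_R$, and an optimisation over $R$ --- is exactly the paper's strategy for $d\ge 3$ (there the good event is $\desc_T(D_k)\subset D_n$ rather than a condition on branch points $v^*$, but these play the same role). However, the proposal leaves the two hardest ingredients unproved, and one step as stated would fail. First, the localisation estimate $\P(\desc(D_k)\not\subset B_R)\le C(k,d)R^{-\alpha_1}$ is not something you can read off from ``escape and coalescence probabilities'' of LERW branches, particularly for $d\ge 5$ where the branches from $B_k$ need not coalesce at all; the paper proves it (Proposition \ref{prop:cond}) by extending the Lyons--Morris--Schramm effective-conductance martingale argument, conditioning on the forest inside $D_k$ and bounding the initial conductance $M_0$ by $|\partial D_k|$. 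You correctly identify this as the main obstacle, but identifying it is not supplying it. Second, your coupling event is mischaracterised: with shared driving walks, the restrictions of the two trees to $B_R$ can differ even if no walk \emph{returns} to $B_R$ after reaching $\partial\Lambda$, because a late self-intersection outside $B_R$ can chronologically erase a segment of the current loop-erasure that passes through $B_R$. The paper's Proposition \ref{prop:W_k-bnd} handles exactly this by working with the popped-cycle ensemble $\fL$: it shows $\fL$ is weight-distributed and independent of the tree, concatenates the offending cycles $\fM$ into a single loop based at $\partial W$ that must exit $\Lambda$, and bounds its total weight by a Green's function estimate, yielding $C_d k^{d-2} n^2 (N-n)^{2-d}$ --- note the factor $\E|\partial W_k|$, which your bound $(R/N)^{\alpha_2}$ omits.

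The more serious gap is $d=2$, which the theorem explicitly covers. Simple random walk on $\Z^2$ is recurrent, so Wilson's method rooted at infinity does not exist and your third paragraph has no meaning there; ``the usual logarithmic correction'' does not rescue a construction that is unavailable. The paper devotes all of Section \ref{sec:d=2} to this case: it passes to the planar dual, couples the dual backbones using quantitative LERW estimates (the growth exponent $5/4$ of Kenyon/Masson, exponential tail bounds of Barlow--Masson, and a finite-vs-infinite LERW comparison), builds a dual ``block'' separating $D^*_k$ from infinity via an iterated Beurling-estimate construction, and transfers back through the monotone coupling of free and wired dual trees. None of this is foreshadowed in your plan, so as written the proposal establishes at best the $d\ge 3$ skeleton with its two key estimates deferred.
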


The exponent $\alpha$ and the dependence on $k$ are explicit, although not optimal; see Theorem \ref{thrm:d>2} and Theorem \ref{thrm:d=2} for more detailed statements. Estimates analogous to \eqref{e:power-law}, but restricted to $d = 2, 3$, have been given in the context of the zero dissipation limit in the abelian avalanche model \cite{Jarai(11),JRS(14)}. We believe that our approach will lead to a significant simplification, and an extension to all $d \ge 2$, of the arguments of \cite{Jarai(11)}.

As mentioned earlier, we will define burning processes on both finite and infinite configurations in such a way that these behave well
with respect to taking limits. In particular, restricting an infinite recurrent
configuration to distinct large finite sets $\Lambda_1, \Lambda_2$, the anchored burning processes on $\Lambda_1$ and $\Lambda_2$ couple with high probability, in the following sense:
\eqn{e:burn-couple}
{ \lim_{\Lambda \uparrow V} \left[ \tau^\cD(x,\Lambda;\eta) - \tau^\cD(y,\Lambda;\eta) \right]
  = c(x,y;\eta). }
We do not know whether the same statement is true for Dhar's original burning algorithm, where at each step \emph{every} burnable vertex is burnt simultaneously.

\begin{open}
\label{op:burn-couple}
Let $\tau(x,k;\eta)$ denote the burning time of $x$ with respect to Dhar's original burning algorithm in the ball of radius $k$ centred at the origin
in $\Z^d$. Does the analogue of \eqref{e:burn-couple} hold for $\Z^d$, $2 \le d \le 4$, as $k \to \infty$?
\end{open}

If the answer is yes, this would imply an affirmative answer to Open Question \ref{op:one-to-one}. This is because the coupling defines a burning time from infinity (unique up to a time shift) and this can be used to define the inverse map.
Note that the arguments of \cite{Athreya(04)} show that the statement of
Open Question \ref{op:burn-couple} fails for $\Z^d$, $d > 4$.

We close this introduction by remarking that a certain analogue of the statement of Open Question \ref{op:burn-couple} holds on graphs of the form $G = G_0 \times \Z$,
with $G_0$ a finite connected graph. Indeed, with respect to the left-burnable measure studied by J\'{a}rai and Lyons \cite{JL(07)}, it is not difficult to construct a configuration on a ``triple of columns''
$G_0 \times \{ 1, 2, 3 \}$ that ``synchronizes'' burning from the left,
and hence coupling occurs. It was in fact by studying this case that we
arrived at the idea of anchored bijections.

\medbreak

The paper has the following structure. In Section \ref{sec:anchored-bijection} we define the anchored bijection in the finite case and then show how this extends to give a bijection in the infinite case. In Section \ref{sec:d>=3} we present the quantitative bounds on $\Z^d$ when $d \ge 3$. In Section \ref{sec:d=2} we give the bounds on $\Z^2$.
Throughout $C_d$ will stand for an unspecified positive constant dependent only on $d$, that we change without any further indication.

\section{Anchored bijections}
\label{sec:anchored-bijection}

Recall that $G = (V,\sfE)$ is a locally finite infinite graph satisfying \eqref{e:one-end};
we allow parallel edges. Let $\cD = \{ D_1, D_2, \dots \}$ be an anchor,
and let $D_0 := \es$. We call the set $E_k := D_k \setminus D_{k-1}$ the \emph{$k$-th shell}.
Given any finite $\Lambda \subset V$, we form the wired graph
$G_\Lambda = (\Lambda \cup \{ s \}, \sfE_\Lambda)$, and denote \eqnsplst
{ \cT_\Lambda
  &= \text{collection of spanning trees in $G_\Lambda$}, \\
  \cR_\Lambda
 &= \text{recurrent sandpiles in $G_\Lambda$}. }
We first define a bijection between $\cR_\Lambda$ and $\cT_\Lambda$ that is an extension of the one considered in \cite{JW(12)}.

\medbreak

\emph{Anchored bijection in finite $\Lambda$.}

Let $K = \max \{ k \ge 0 : D_k \subset \Lambda \}$. Fix $\eta \in \cR_\Lambda$.
Our definitions will depend on $\cD$, but we will not always indicate this in our notation.

\medbreak

{\bf Phase 1.} We apply the usual burning algorithm to $\eta$ with the
restriction that we do not allow any vertex of $D_K$ to burn. That is, we
define
\eqnsplst
{ B^{(1)}_0
  &:= \{ s \}, \\
  U^{(1)}_0
  &:= \Lambda, }
and for $j \ge 1$ we inductively set:
\eqnsplst
{ B^{(1)}_j
  &:= \left\{ v \in U^{(1)}_{j-1} \setminus D_K : \eta(v) \ge \deg_{U^{(1)}_{j-1}}(v) \right\}, \\
  U^{(1)}_j
  &:= U^{(1)}_{j-1} \setminus B^{(1)}_j. }
We have $B^{(1)}_j = \es$ eventually.
Note that there may be vertices in $\Lambda \setminus D_K$ that do not 
burn in Phase $1$. These vertices, together with the vertices in $D_K$,
will burn in later phases.

\medbreak

Assuming Phase $i-1$ has already been defined for some $2 \le i \le K+1$,
we inductively define Phase $i$ as follows.

\medbreak

{\bf Phase $\mathrm{i}$.} We continue the burning algorithm on $\eta$
with the restriction that no vertex of $D_{K-i+1}$ is allowed to burn.
That is, we set
\eqnsplst
{ B^{(i)}_0
  &:= \cup_{j \ge 0} B^{(i-1)}_j, \\
  U^{(i)}_0
  &:= \Lambda \setminus B^{(i)}_0, }
and for $j \ge 1$ we inductively set:
\eqnsplst
{ B^{(i)}_j
  &:= \left\{ v \in U^{(i)}_{j-1} \setminus D_{K-i+1} : \eta(v) \ge \deg_{U^{(i)}_{j-1}}(v) \right\}, \\
  U^{(i)}_j
  &:= U^{(i)}_{j-1} \setminus B^{(i)}_j. }
We have $B^{(i)}_j = \es$ eventually.
Note that if $i \le K$, there may be vertices in $\Lambda \setminus D_{K-i+1}$ that do not burn
in Phase $i$, only later.

\medbreak

Since $\eta$ is recurrent, all vertices that did not burn in Phases $1, \dots, K$, do burn
in Phase $K+1$ (if this was not true, we would have found a subset that is not ample for $\eta$). 
Hence we have $\cup_{j \ge 0} B^{(K+1)}_j = \Lambda \cup \{ s \}$ 

\medbreak

We now define a map $\varphi_{\cD,\Lambda} : \cR_\Lambda \to \cT_\Lambda$.
Regard $G_\Lambda$ as an oriented graph, with each edge being present
with both possible orientations. We fix for each $v \in \Lambda$ a linear ordering $\prec_v$ of the oriented edges $e$ such that $\tail(e) = v$. Given the burning of $\eta$ as above, we define
what oriented edges will be present in the tree $t = \varphi_{\cD,\Lambda}(\eta)$.

If $v \in B^{(i)}_j$ for some $1 \le i \le K+1$ and $j \ge 1$, then we place
an oriented edge pointing from $v$ to some $w \in B^{(i)}_{j-1}$. 
In the case $j = 1$ such edge exists, because $v$ must 
have a neighbour outside $U^{(i)}_0$, and hence in $B^{(i)}_0$.
In the case $j \ge 2$ such edge also exists, because the requirement to
burn $v$ at step $j$ implies that the degree of $v$ in $U^{(i)}_{j-1}$ is strictly
smaller than its degree in $U^{(i)}_{j-2}$. Hence $v$ has a neighbour 
in $B^{(i)}_{j-1} = U^{(i)}_{j-2} \setminus U^{(i)}_{j-1}$.
If there is more than one $w \in B^{(i)}_{j-1}$ neighbouring $v$, we make the 
choice of the edge dependent on $\eta(v)$, similarly to the usual burning bijection. 
Formally, we let:
\eqnsplst
{ m_v
  &:= \Big| \Big\{ f : \tail(f) = v,\, \head(f) \in \bigcup_{j' < j} B^{(i)}_{j'} \Big\} \Big|, \\
  F_v
  &:= \left\{ e : \tail(e) = v,\, \head(e) \in B^{(i)}_{j-1} \right\}. }
Due to the burning rule, we have
\eqnst
{ \eta(v)
  = \deg(v) - m_v + \ell \quad \text{ for some $0 \le \ell < |F_v|$.} }
With $\ell$ as above, let $e_v \in F_v$ be that edge $e$ such that $| \{ f \in F_v : f \prec_v e \} | = \ell$. Then we place the directed edge
$e_v$ in $t$.

\begin{lemma}
\label{lem:psi}
For any $\eta \in \cR_\Lambda$ the collection of edges $t$ (disregarding their orientations) is a spanning tree of $G_\Lambda$, and the map
$\varphi_{\cD,\Lambda} : \eta \mapsto t$ is injective. Consequently, $\varphi_{\cD,\Lambda}$ is a bijection between $\cR_\Lambda$ and $\cT_\Lambda$.
\end{lemma}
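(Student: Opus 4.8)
The plan is to establish the three assertions in turn: that $t = \varphi_{\cD,\Lambda}(\eta)$ is always a spanning tree, that $\eta \mapsto t$ is injective, and finally that it is a bijection via a counting argument. For the first, note that each non-sink vertex $v$ first burns at some step $j \ge 1$ of some phase $i$ and is assigned exactly one outgoing edge $e_v$, while the sink receives none; hence $t$ consists of $|\Lambda|$ oriented edges on the $|\Lambda|+1$ vertices of $G_\Lambda$, one out of each non-sink vertex. Introduce the burning time $\theta(v) := (i,j)$ when $v$ first enters $B^{(i)}_j$, ordered lexicographically with $\theta(s)$ minimal. First I would check that $\theta$ strictly decreases along every oriented edge: if $v \in B^{(i)}_j$ then $\head(e_v) \in B^{(i)}_{j-1}$, which for $j \ge 2$ burns at $(i,j-1)$ and for $j=1$ lies in $B^{(i)}_0$ and hence burns in a strictly earlier phase. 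Consequently $t$ has no oriented cycle, and following out-edges from any vertex produces a strictly $\theta$-decreasing, hence terminating, path that can only stop at the unique vertex with no out-edge, namely $s$. So every vertex is joined to $s$, making $t$ connected; connected with $|\Lambda|$ edges on $|\Lambda|+1$ vertices, it is a spanning tree.

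For injectivity I would show that the entire burning, i.e.\ all the sets $B^{(i)}_j$, is a function of $t$ and $\cD$ alone, so that $\eta$ can be read back off. For $v$ in the $m$-th shell let $a(v)$ be the first phase in which $v$ is \emph{not} forbidden; this depends only on $\cD$ and $K$. Orient $t$ towards $s$ and write $p(v) = \head(e_v)$. The key step is the identity
\[
  \Phi(v) = \max\big( a(v),\, \Phi(p(v)) \big),
\]
where $\Phi(v)$ denotes the phase in which $v$ burns. Its heart, and the step I expect to be the main obstacle, is the sub-claim that if $v$ first burns at step $1$ of phase $i$ then necessarily $a(v) = i$. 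To prove this I would observe that the burnt configuration at the end of phase $i-1$ and at the start of phase $i$ coincide, so the two phases differ only through their forbidden sets $D_{K-i+2} \supset D_{K-i+1}$; a vertex already allowed in phase $i-1$ and burnable at that common configuration would therefore have burnt in phase $i-1$, contradicting that it first burns at the start of phase $i$. This is precisely where the nesting built into the anchor is used. Granting the identity, $\Phi$ is determined by $t$ and $\cD$ by induction down the tree, and then $\mathrm{step}(v) = 1$ when $\Phi(p(v)) < \Phi(v)$ and $\mathrm{step}(v) = \mathrm{step}(p(v)) + 1$ otherwise, so all burning times and hence all sets $B^{(i)}_j$ are recovered. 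From these I read off $m_v$ and $F_v$ from the graph, obtain $\ell = |\{ f \in F_v : f \prec_v e_v \}|$ from the unique out-edge $e_v$ of $v$ in $t$, and conclude $\eta(v) = \deg(v) - m_v + \ell$; thus $\eta$ is uniquely determined by $t$.

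Finally, having shown that $\varphi_{\cD,\Lambda}$ is an injection from the finite set $\cR_\Lambda$ into $\cT_\Lambda$, I would invoke the classical identity $|\cR_\Lambda| = |\cT_\Lambda| = \det$ (reduced graph Laplacian of $G_\Lambda$), due to Dhar together with the matrix--tree theorem, so that an injection between finite sets of equal cardinality is automatically a bijection. Alternatively, one could prove surjectivity directly by running the reconstruction of the previous paragraph on an arbitrary $t \in \cT_\Lambda$ and verifying that the resulting $\eta$ is recurrent, but the counting argument is shorter and avoids re-deriving recurrence.
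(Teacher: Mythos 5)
Your proposal is correct; the spanning-tree step and the final counting step ($|\cR_\Lambda| = |\cT_\Lambda| = \det(\Delta)$, so an injection between finite sets of equal size is a bijection) coincide with the paper's, but your injectivity argument takes a genuinely different route. The paper compares two configurations: if $\eta_1 \neq \eta_2$, it locates the lexicographically first index $(i,j)$ at which the two burning processes, or the heights on the sets already burnt, disagree, and observes that the rule selecting $e_v$ then assigns different out-edges at some vertex $v$, hence different trees. You instead reconstruct the entire burning history from the tree alone via the recursion $\Phi(v) = \max\big(a(v), \Phi(p(v))\big)$, whose only nontrivial case is your sub-claim that a vertex first burnt at step $1$ of Phase $i$ must have been forbidden in Phase $i-1$, i.e.\ must lie in $D_{K-i+2}$. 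Your justification of that sub-claim is sound: $U^{(i)}_0$ is exactly the terminal unburnt set of Phase $i-1$, so a vertex outside $D_{K-i+2}$ satisfying $\eta(v) \ge \deg_{U^{(i)}_0}(v)$ would have burnt before that phase terminated. From $\Phi$ and the step function you recover all the sets $B^{(i)}_j$, hence $m_v$, $F_v$, $\ell$ and finally $\eta$. This route is somewhat longer but buys more: it produces an explicit inverse map, which is essentially what the paper needs anyway when it defines $\psi_{W,k}$ in Lemma \ref{lem:consistent} by ``inverting'' the burning, and it would let you establish surjectivity directly instead of through the cardinality identity. The paper's first-disagreement argument is shorter and sidesteps the step-$1$ sub-claim entirely.
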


\begin{proof}
It is clear from the definitions that there are no cycles in $t$, since the sets $B^{(i)}_j$, are disjoint and ``lexicographically ordered''
by the indices $(i,j)$ for $1 \le i \le K+1$, $j \ge 1$. In order to show injectivity, suppose that $\eta_1 \not= \eta_2$. There is a first
time $(i,j)$ in the burning processes of $\eta_1$ and $\eta_2$, where the ``two processes differ''. That is, there exists a lexicographically
smallest $(i,j)$ such that $B^{(i')}_{j'} (\eta_1) = B^{(i')}_{j'} (\eta_2)$ for all $i' < i$, $j' \ge 1$
and for all $i' = i$, $j' < j$, and $\eta_1(v) = \eta_2(v)$ for all elements $v$ of these sets, but \eqnsplst
{ \text{there exists $v \in B^{(i)}_j (\eta_1) \cup B^{(i)}_j (\eta_2)$
    such that $\eta_1(v) \not= \eta_2(v)$.} }
It is easy to check that our definition of $\varphi_{\cD,\Lambda}$ assigns different oriented edges emanating from $v$ for $\eta_1$ and $\eta_2$. Since all edges are oriented towards the sink, this implies that the two trees also differ as unoriented trees, proving injectivity. Since $\cR_\Lambda$ and $\cT_\Lambda$ have the same number of elements
$\det(\Delta)$ \cite{Dhar(90)}, it follows that $\varphi_{\cD,\Lambda}$ is a bijection.
\end{proof}

Given $\eta \in \cR_\Lambda$, we define the \emph{burning time} $\tau^\cD(x,\Lambda;\eta)$ as the index 
of the pair $(i,j)$ in the lexicographic order, where $B^{(i)}_j \ni x$, $1 \le i \le K+1$, $j \ge 1$ 
(we restrict to the non-empty $B^{(i)}_j$'s). Note that in general this differs from the graph distance 
of $x$ from $s$ in the tree $\varphi_{\cD,\Lambda}(\eta)$.
This is because at Step 1 of Phase $i$, we may be connecting a vertex $v \in B^{(i)}_1 \cap D_{K-i+2}$ 
to a vertex $w$ that was burnt not in the last step of Phase $i-1$. 
% However, this is the
% only difference: if $v$ and $w$ are burnt in the same phase, the difference of their
% respective burning times equals the difference of their respective graph distances to $s$.
% or even not in Phase $i-1$ at all.
%This is because there is a vertex at every graph distance on the path from $x$ to $s$ but there may be 
%burning times that are not represented on the path, for instance neighbours may burn at time $(2,3)$ 
%and $(3,1)$ but have other vertices, not on the path, that burnt at time $(2,4)$.

Given $D \subset \Lambda$ and a spanning tree $t$ of $G_\Lambda$, we write $\desc_t(D)$ for the set of descendants of $D$ in $t$, that is, the collection of vertices $w$ such that the path in $t$ from $w$ to $s$ has a vertex in $D$.

\begin{lemma}
\label{lem:U=desc}
For any finite $\Lambda \subset V$, $1 \le i \le K+1$, and $\eta \in \cR_\Lambda$, the set of vertices that did not burn
by the end of Phase $i$ are precisely the descendants of $D_{K-i+1}$. That is, we have $U^{(i+1)}_0 = \desc_{\varphi_{\cD,\Lambda}(\eta)}(D_{K-i+1})$.
\end{lemma}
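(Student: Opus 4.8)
The plan is to induct on $i$, proving the statement in the form $U^{(i+1)}_0 = \desc_t(D_{K-i+1})$ (with $t = \varphi_{\cD,\Lambda}(\eta)$) for $i = 1, \dots, K+1$, taking as inductive hypothesis the previous case $U^{(i)}_0 = \desc_t(D_{K-i+2})$; the base case $i=1$ is handled by the same argument with $U^{(1)}_0 = \Lambda$ read as $\desc_t(D_{K+1})$, which is consistent because $D_{K+1} \not\subseteq \Lambda$, so every vertex of $\Lambda$ is separated from $s$ by the exterior. For the inductive step I would prove the two inclusions separately. Writing $\tau(x) := \tau^\cD(x,\Lambda;\eta)$ for the burning time, the elementary fact I will use throughout is that $\tau$ strictly decreases along any path of $t$ oriented towards $s$: if $v \in B^{(i)}_j$ with $j \ge 1$, then its parent $\head(e_v)$ lies in $B^{(i)}_{j-1}$, which is burnt at the previous step of the same phase when $j \ge 2$ and before the start of Phase $i$ when $j=1$; either way $\tau(\head(e_v)) < \tau(v)$. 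Hence every ancestor of a vertex burns strictly before it.

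The inclusion $\desc_t(D_{K-i+1}) \subseteq U^{(i+1)}_0$ is the easy one and does not use the inductive hypothesis. Since $D_{K-i+1} \subseteq D_{K-i'+1}$ for every $i' \le i$, the set $D_{K-i+1}$ is protected in each of Phases $1, \dots, i$, so no vertex of $D_{K-i+1}$ burns before Phase $i+1$. Given $v \in \desc_t(D_{K-i+1})$, let $u \in D_{K-i+1}$ lie on the path in $t$ from $v$ to $s$; then $u$ is an ancestor of $v$ (or $u=v$), so by monotonicity of $\tau$ we have $\tau(v) \ge \tau(u)$, and as $u$ burns only in Phase $> i$ so does $v$. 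Thus $v \in U^{(i+1)}_0$.

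The reverse inclusion $U^{(i+1)}_0 \subseteq \desc_t(D_{K-i+1})$ is where the work lies. Using $U^{(i)}_0 = \desc_t(D_{K-i+2})$ and $U^{(i+1)}_0 \subseteq U^{(i)}_0$, it suffices to show that every vertex of $W := \desc_t(D_{K-i+2}) \setminus \desc_t(D_{K-i+1})$ burns during Phase $i$. I would do this via the order-independence (\emph{abelian property}) of the burning algorithm: Phase $i$ consumes the unique maximal burnable set obtainable from $U^{(i)}_0$ with $D_{K-i+1}$ forbidden, so it is enough to exhibit one legal burning order that consumes all of $W$. The candidate is to process $W$ in order of increasing $\tau$, certifying that each $v \in W$ is burnable when its turn comes by comparison with the full process: there $v$ burns when the unburnt set is exactly $\{x : \tau(x) \ge \tau(v)\}$, giving $\eta(v) \ge \deg_{\{x : \tau(x) \ge \tau(v)\}}(v)$, and one wants the unburnt set during the proposed Phase-$i$ order to be contained in $\{x : \tau(x) \ge \tau(v)\}$, whence the threshold is met a fortiori.

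The main obstacle is precisely this containment. The unburnt set at the relevant moment is contained in $\desc_t(D_{K-i+1}) \cup \{x \in W : \tau(x) \ge \tau(v)\}$, and the second piece is harmless; the difficulty is to control the contribution of $\desc_t(D_{K-i+1})$ to $\deg(v)$. By the first inclusion these vertices are unburnt throughout Phase $i$, but some may have smaller burning time than $v$ and so need not lie in $\{x : \tau(x) \ge \tau(v)\}$. Resolving this amounts to ruling out a neighbour $z$ of $v$ with $z \in \desc_t(D_{K-i+1})$ and $\tau(z) < \tau(v)$ — equivalently, showing that such a $z$ would force $v$ itself into $\desc_t(D_{K-i+1})$, contradicting $v \in W$. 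I expect this step to use the separation of $D_{K-i+1}$ inside $D_{K-i+2}$ (the natural point at which the simple-connectedness of Definition \ref{defn:anchor}(ii) enters), carried out by a reverse induction on $\tau$ along the tree path from the shell $E_{K-i+2}$ towards $v$. Once the containment is in hand, the abelian property closes the induction and yields $U^{(i+1)}_0 = \desc_t(D_{K-i+1})$.
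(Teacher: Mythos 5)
Your first inclusion ($\desc_t(D_{K-i+1}) \subseteq U^{(i+1)}_0$) is correct and is essentially the paper's argument: burning times decrease along oriented tree paths, and $D_{K-i+1}$ is forbidden throughout Phases $1,\dots,i$. The reverse inclusion is where your proposal has a genuine gap. You reduce it to ruling out a graph-neighbour $z$ of $v \in W$ with $z \in \desc_t(D_{K-i+1})$ and $\tau(z) < \tau(v)$, and you assert that such a $z$ ``would force $v$ itself into $\desc_t(D_{K-i+1})$''. That implication is unsubstantiated: a neighbour of $v$ in $G$ with smaller burning time need not be an ancestor of $v$ in $t$, so nothing about $v$'s tree path to $s$ follows from the existence of $z$. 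Moreover, the statement you need (every $z \in \desc_t(D_{K-i+1})$ has $\tau(z) > \tau(v)$ for every $v \in W$) is, via your own easy inclusion, equivalent to the assertion that every $v\in W$ burns in Phase $i$ --- precisely what is being proved --- and the ``reverse induction on $\tau$'' you gesture at is not set up in a way that breaks this circularity. The appeal to simple-connectedness of the $D_k$ is also misplaced: the paper imposes condition (ii) of Definition \ref{defn:anchor} only as a normalization, and it plays no role in this lemma.

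The missing idea is much simpler and makes the whole abelian-property detour, the decomposition via $W$, and the induction on $i$ unnecessary. Since Phase $i''-1$ terminates with unburnt set $U^{(i'')}_0$ while the forbidden set is $D_{K-i''+2}$, any vertex of $U^{(i'')}_0 \setminus D_{K-i''+2}$ meeting the burning threshold in $U^{(i'')}_0$ would already have burnt; hence $B^{(i'')}_1 \subseteq D_{K-i''+2}$ for every $i''$. Now take $v$ burnt in some Phase $i' \ge i+1$ and follow its oriented path to $s$: the pairs (phase, step) strictly decrease lexicographically, the phase index is non-increasing along the path, and it can only drop at a vertex burnt at step $1$ of its phase. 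The last vertex $w$ on the path burnt in a phase $i'' \ge i+1$ therefore lies in $B^{(i'')}_1 \subseteq D_{K-i''+2} \subseteq D_{K-i+1}$, so $v \in \desc_t(D_{K-i+1})$. Combined with your easy inclusion this gives the lemma directly for each $i$; this is the paper's short proof.
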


\begin{proof}
Observe that all vertices in $B^{(i+1)}_1$ are in $D_{K-i+1}$, otherwise
they could have been burnt in Phase $i$. Since the oriented edges assigned
by the bijection respect the lexicographic order, and the orientation is towards
the sink, this implies that all vertices burnt in Phases $i+1, \dots, K+1$ are in $\desc_{\varphi_{\cD,\Lambda}(\eta)}(D_{K-i+1})$. On the other hand, if a vertex $v$
was burnt in one of the Phases $1, 2, \dots, i$, then all vertices on the
oriented path from $v$ to $s$ were also burnt in one of these Phases, and hence
$v \not\in \desc_{\varphi_{\cD,\Lambda}(\eta)}(D_{K-i+1})$. This completes the proof.
\end{proof}

We next formulate a consistency property between the sandpile configurations
on the sets $\desc_{\varphi_{\cD,\Lambda}(\eta)}(D_{k})$, $k \ge 1$, that will help us to take the limit $\Lambda \uparrow V$.

\begin{definition}
\label{def:G*}
Given $k \ge 1$ and a finite simply connected set $W$ with $D_k \subset W \subset V$, we define the graph $G_{W,k}^* = (W \cup \{ s \}, \sfE_{W,k}^*)$ as follows. It contains all the edges
that $W$ induces in the graph $V$, and for each edge $e \in \sfE$ that connects
a vertex $u \in D_k$ with a vertex $v \in V \setminus W$, there is an edge
in $\sfE_{W,k}^*$ between $u$ and $s$. Note that there is a natural identification between $\sfE_{W,k}^*$ and a subset of $\sfE$, and we will use this identification
freely in what follows.
\end{definition}

\begin{lemma}
\label{lem:consistent}
(i) Suppose $D_k \subset W \subset V$ with $W$ simply connected.
There is a mapping $\psi_{W,k} : \cT_{G_{W,k}^*} \to \cR_{G_{W,k}^*}$
such that whenever $\Lambda \supset W$, $t \in \cT_\Lambda$ and
$W = \desc_t(D_k)$ holds,
\eqnst
{ \text{the restriction of the sandpile $\varphi^{-1}_{\cD,\Lambda}(t)$ to $W$
    equals $\psi_{W,k}(t_{W,k})$,} } where $t_{W,k}$ denotes the restriction of $t$ to the edges in $\sfE_{W,k}^*$.\\
(ii) Suppose $D_{k'} \subset D_{k} \subset W$. Let $t \in \cT_{G_{W,k}^*}$. If $W' = \desc_t(D_{k'})$, then
\eqnst
{ \text{the restriction of $\psi_{W,k}(t)$ to $W'$ is given by
     $\psi_{W',k'}(t_{W',k'})$.} }
\end{lemma}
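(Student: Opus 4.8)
The plan is to define $\psi_{W,k}$ to be the inverse anchored bijection on the auxiliary graph itself, namely $\psi_{W,k} := \varphi_{\cD,G_{W,k}^*}^{-1} : \cT_{G_{W,k}^*} \to \cR_{G_{W,k}^*}$, and then to prove (i) by showing that for any admissible pair $(\Lambda,t)$ the anchored burning of $\eta := \varphi_{\cD,\Lambda}^{-1}(t)$, restricted to the phases that burn $W$, coincides step for step with the anchored burning of $\eta|_W$ on $G_{W,k}^*$. Once the two burnings are identified, the tree edges they assign agree, so $\varphi_{\cD,G_{W,k}^*}(\eta|_W) = t_{W,k}$, which is exactly $\eta|_W = \psi_{W,k}(t_{W,k})$; recurrence of $\eta|_W$ on $G_{W,k}^*$ then follows because the matched burning exhausts $W$.

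The structural inputs I would assemble first are the following. By Lemma~\ref{lem:U=desc} (with $i=K-k+1$) one has $U^{(K-k+2)}_0 = \desc_t(D_k) = W$, so the vertices of $W$ are precisely those burnt in Phases $K-k+2,\dots,K+1$, which forbid $D_{k-1},D_{k-2},\dots,D_0$ in turn. Next I would check that every $t$-path from a vertex of $W$ to $s$ leaves $W$ only through $D_k$: if $w$ is the last vertex of such a path lying in $W$, its $t$-parent lies outside $\desc_t(D_k)$, which forces $w\in D_k$. This guarantees $t_{W,k}\in\cT_{G_{W,k}^*}$ and that, for $j\le k$, $\desc_t(D_j)$ is computed identically in $t$ and in $t_{W,k}$. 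Finally I would match degrees: throughout Phases $K-k+2,\dots,K+1$ the unburnt set is contained in $W$, so the unburnt-degrees that drive the burning count only edges inside $W$, and these agree in $G$ and in $G_{W,k}^*$. On the $G_{W,k}^*$ side every phase that forbids some $D_j$ with $j\ge k$ is empty, since the only neighbours of $s$ lie in $D_k\subseteq D_j$ and no interior vertex can reach its full degree; the first non-trivial phase is the one forbidding $D_{k-1}$. An induction on (phase, step) then shows that the two burnings have the same phase/step decomposition of $W$.

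The hard part will be matching the actual edge assignment, i.e.\ the rank $\ell$ of the tree edge inside $F_v$ under $\prec_v$, because $G$ and $G_{W,k}^*$ differ on the edges of $v$ leaving $W$. For $v\in D_k$ these exterior edges become the $s$-edges of $G_{W,k}^*$, and under the natural identification of $\sfE_{W,k}^*$ with a subset of $\sfE$ (Definition~\ref{def:G*}) the order $\prec_v$, and hence $\ell$, is preserved, so the height $\eta(v)=\deg(v)-m_v+\ell$ is reproduced. The delicate case is $v\in W\setminus D_k$, whose exterior edges are simply deleted in $G_{W,k}^*$; here I must ensure that none of the deleted edges ever lies in $F_v$. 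This is exactly where I would invoke the estimate from the proof of Lemma~\ref{lem:U=desc}, reindexed to read $B^{(i)}_1\subseteq D_{K-i+2}\subseteq D_k$ for each relevant phase $i\in\{K-k+2,\dots,K+1\}$: it tells us that no vertex of $W\setminus D_k$ burns at the first step of any phase, so such a $v$ burns at some step $j\ge 2$, whence $F_v\subseteq B^{(i)}_{j-1}\subseteq W$ contains no exterior edge. Thus $\ell$ is unchanged, and since each deleted edge contributes equally to $\deg(v)$ and to $m_v$, the value $\eta(v)$ is again reproduced. This completes the identification of the two burnings and proves (i).

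For part (ii) the plan is to read it as an instance of (i) with the ambient wired graph $G_{W,k}^*$ playing the role of $\Lambda$. The one point to verify is that the $G^*$-construction composes: for $D_{k'}\subseteq D_k\subseteq W$ and $W'=\desc_t(D_{k'})$, the graph $G_{W',k'}^*$ built inside the original $V$ coincides with the one built inside $G_{W,k}^*$. Indeed the within-$W'$ edges are the same, and an edge of $G_{W,k}^*$ leaving $D_{k'}$ to outside $W'$ is either an $s$-edge (arising from a $D_k$-to-$(V\setminus W)$ edge of $\sfE$, and $D_{k'}\subseteq D_k$) or a genuine edge into $W\setminus W'$; in either picture these are exactly the $D_{k'}$-to-$(V\setminus W')$ edges of $\sfE$. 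Granting this, applying the argument of part (i) inside $G_{W,k}^*$ with anchor index $k'$ yields that the restriction of $\varphi_{\cD,G_{W,k}^*}^{-1}(t)=\psi_{W,k}(t)$ to $W'=\desc_t(D_{k'})$ equals $\psi_{W',k'}(t_{W',k'})$, which is the claim. (Here I use that $\psi_{W',k'}$ is supplied by (i), so $W'$ is taken simply connected.)
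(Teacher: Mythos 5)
Your proposal is correct and takes essentially the same route as the paper: there, too, $\psi_{W,k}$ is defined by inverting the map $\eta_W \mapsto t_{W,k}$ determined by Phases $K-k+2,\dots,K+1$, using Lemma~\ref{lem:U=desc} to see that these phases depend only on $\eta_W$ and the graph $G^*_{W,k}$, independently of $\Lambda$. You simply spell out the details the paper dismisses as ``easy to check using the burning rules'' (the inclusion $B^{(i)}_1 \subseteq D_{K-i+2}$ and the degree/rank bookkeeping for exterior edges) and identify $\psi_{W,k}$ concretely as the inverse anchored bijection intrinsic to $G^*_{W,k}$, rather than appealing directly to $|\cR_{G^*_{W,k}}|=|\cT_{G^*_{W,k}}|$.
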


\begin{proof}
(i) Write $\eta = \varphi^{-1}_{\cD,\Lambda}(t)$.
Due to Lemma \ref{lem:U=desc}, the statement $W = \desc_t (D_k)$ is equivalent
to the statement that in the sandpile $\eta$, $W$ is precisely
the set of vertices that did not burn in Phase $K - k + 1$. It is easy to check using the burning rules that as $\eta$ varies over all sandpiles with this
property, the restriction $\eta_W$ ranges over $\cR_{G_{W,k}^*}$, and $t_{W,k}$ is a spanning tree of $G_{W,k}^*$. It follows from our definition
of Phases $K - k + 2, \dots, K+1$ of the anchored bijection that $t_{W,k}$ is entirely determined by $\eta_W$, in a way independent of $\Lambda$.
The map $\eta_W \mapsto t_{W,k}$ is injective, and since $|\cR_{G_{W,k}^*}| = |\cT_{G_{W,k}^*}|$ it is bijective. Hence $\psi_{W,k}$ can be defined as the inverse of this map.

(ii) This follows similarly to part (i), because if $\Lambda \supset W$ and
$\eta$ is as in part (i), then the restriction of $\eta_W$ to $W'$ is $\eta_{W'}$.
\end{proof}

\medbreak

We are now ready to extend the bijection to $G$.

\emph{Anchored bijection on $G$.}

Observe that for every $t \in \cT$ and $v \in V$ there is a unique infinite path in $t$ starting at $v$. Hence for any finite $D \subset V$, we can define $\desc_t(D)$ as those vertices for which the infinite path starting at $v$ has a vertex in $D$.

Given $t \in \cT$, for every $k \ge 1$ let $W_k = \desc_t(D_k)$. Observe that
due to the one-end property \eqref{e:one-end} of elements of $\cT$, $W_k$ is finite
for all $k \ge 1$.
Denote by $t_{W_k,k}$ the restriction of $t$ to the edges in $\sfE_{W_k,k}^*$.
Due to Lemma \ref{lem:consistent}(ii), the configurations
$\psi_{W_k,k}(t_{W_k,k})$ consistently define a stable configuration $\eta$ on $V$. This $\eta$ will be an element of $\cR$, because for any finite $F \subset V$ there exists $k \ge 1$ such that $D_k \supset F$, and $\psi_{W_k,k}(t_{W_k,k}) = \eta_{W_k}$ is ample for $F$. We denote the configuration obtained by $\psi_\cD(t)$, so $\psi_\cD : \cT \to \cR$.

\begin{remark}
\label{rem:burning}
Whenever $\Lambda \supset W_k = \desc_t(D_k)$, we have the following property.
If we start burning $\psi_\cD(t)\vert_\Lambda$ with the restriction that no vertex of $D_k$ is allowed to burn, then the set of vertices that cannot be burnt is exactly $W_k$. This follows by considering the burning
process in some $W_{k'} \supset \Lambda$.
\end{remark}

\begin{lemma}
\label{lem:inj}
The map $\psi_\cD$ is injective and continuous.
\end{lemma}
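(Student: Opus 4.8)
The plan is to establish continuity and injectivity separately, in both cases orienting each $t \in \cT$ toward its unique end so that every vertex $w$ has a well-defined forward path and $\desc_t(D) = \{ w : \text{the forward path of } w \text{ meets } D \}$; both parts rest on Lemmas~\ref{lem:U=desc}--\ref{lem:consistent} together with Remark~\ref{rem:burning}. For continuity, since $\cR$ carries the product topology and heights are integer valued, I would show that for each fixed $v \in V$ the map $t \mapsto \psi_\cD(t)(v)$ is locally constant. First I would fix $v$, choose $k$ with $v \in D_k$, and for a given $t$ set $W_k = \desc_t(D_k)$, which is finite by \eqref{e:one-end}; let $S$ denote the finite set of edges of $G$ with an endpoint in $W_k$. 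Since $\psi_\cD(t)(v) = \psi_{W_k,k}(t_{W_k,k})(v)$ and $t_{W_k,k}$ is read off from $t \cap S$ (the edges $W_k$ induces together with the edges from $D_k$ to $V \setminus W_k$), it suffices to prove that every $t' \in \cT$ with $t' \cap S = t \cap S$ satisfies $\desc_{t'}(D_k) = W_k$, the identity $t'_{W_k,k} = t_{W_k,k}$ being then immediate.

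The crux is this last equality, which I would prove by an orientation/forward-path argument. For the inclusion $W_k \subseteq \desc_{t'}(D_k)$: for $w \in W_k$ the $t$-forward path from $w$ reaches $D_k$ while remaining in $W_k$ (every vertex strictly before the first hit of $D_k$ is itself a descendant of $D_k$), so it uses only edges of $S$ and hence is also the $t'$-forward path. For the reverse inclusion I would argue by contradiction: if some $w \notin W_k$ had its $t'$-forward path meet $D_k$, the first edge $u' \to u$ of that path entering $W_k$ lies in $S$, hence is present and identically oriented in $t$; but then the $t$-forward path of $u'$ runs through $u \in W_k$ into $D_k$, forcing $u' \in W_k$, a contradiction.

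For injectivity, suppose $\psi_\cD(t) = \psi_\cD(t') = \eta$. Fix $k$. By Remark~\ref{rem:burning}, for any $\Lambda \supseteq \desc_t(D_k)$ the set of vertices of $\eta\vert_\Lambda$ that cannot burn when burning is forbidden on $D_k$ equals $\desc_t(D_k)$; as this unburnable set depends on $\eta$ alone, the same statement applied to $t'$ yields $\desc_t(D_k) = \desc_{t'}(D_k) =: W_k$. Since $\psi_{W_k,k} : \cT_{G_{W_k,k}^*} \to \cR_{G_{W_k,k}^*}$ is a bijection by Lemma~\ref{lem:consistent}(i), I obtain $t_{W_k,k} = \psi_{W_k,k}^{-1}(\eta_{W_k}) = t'_{W_k,k}$, so $t$ and $t'$ carry the same edges inside $W_k$. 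Letting $k \to \infty$, with $W_k \supseteq D_k \uparrow V$, every edge of $G$ lies inside $W_k$ for all large $k$, whence $t = t'$.

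I expect the main obstacle to be the continuity step, precisely because $W_k = \desc_t(D_k)$ varies with $t$: one cannot invoke a single fixed finite window, and must instead verify that perturbing $t$ away from the edges touching $W_k$ cannot change which vertices descend from $D_k$. The forward-path argument is what pins $\desc_{t'}(D_k)$ down, and it is here that \eqref{e:one-end} is essential, both to keep $W_k$ finite and to supply the single coherent orientation on which the argument relies.
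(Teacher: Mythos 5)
Your injectivity argument is essentially the paper's: identify $\desc_{t}(D_k)=\desc_{t'}(D_k)$ via the burning characterization of Remark \ref{rem:burning} (the paper runs the burning in $\Lambda=\desc_t(D_k)\cup\desc_{t'}(D_k)$ so that the remark applies to both trees at once -- you should do the same), then invoke the bijectivity of $\psi_{W_k,k}$ and let $k\to\infty$. That half is correct. Your continuity argument also follows the paper's outline (fix $t$, $k$, $W_k=\desc_t(D_k)$, and show that agreement on the edges touching $W_k$ forces $\desc_{t'}(D_k)=W_k$), and you go further than the paper, which simply asserts this last identity. The problem is that the justification you give for it does not hold as stated.

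Both of your forward-path steps rest on transferring \emph{orientations} from $t$ to $t'$, but the hypothesis $t'\cap S=t\cap S$ only matches the \emph{unoriented} edge sets: the orientation of an edge in a one-ended forest is determined by which side of the edge contains the end, i.e.\ by global data outside $S$. In the forward inclusion, the $t$-forward path of $w\in W_k$ up to $D_k$ is indeed a path in $t'$, but nothing you say shows it is an initial segment of the $t'$-forward path of $w$. In the reverse inclusion the issue is worse: for the first edge $\{u',u\}$ of the $t'$-forward path of $w$ entering $W_k$, one can check that necessarily $u\in D_k$ (since $t$, hence $t'$, has no edges between $W_k\setminus D_k$ and $V\setminus W_k$), and then, because $u'\notin\desc_t(D_k)$, the edge is oriented $u\to u'$ in $t$ -- the \emph{opposite} of what you assert. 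So ``identically oriented'' is exactly what fails in the configuration you are trying to exclude, and your contradiction evaporates. The fix is to characterize $\{\desc_\cdot(D_k)=W_k\}$ by two conditions visible in $t\cap S$: (a) no edges between $W_k\setminus D_k$ and $V\setminus W_k$, and (b) the restriction to $\sfE^*_{W_k,k}$ is a spanning tree of $G^*_{W_k,k}$, hence has exactly $|W_k|$ edges. Condition (a) traps the component of each $w\in W_k\setminus D_k$ in $t'\setminus D_k$ inside the finite set $W_k\setminus D_k$, giving $W_k\subseteq\desc_{t'}(D_k)$; condition (b), since every vertex of $W_k$ contributes one $t'$-forward edge lying in $\sfE^*_{W_k,k}$, forces these to exhaust $t'\cap\sfE^*_{W_k,k}$, so no vertex outside $W_k$ has its $t'$-forward edge pointing into $D_k$, giving the reverse inclusion. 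An edge-counting argument of this kind (or an equivalent one) is the missing idea; without it the continuity step is not proved.
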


\begin{proof}
Suppose that $t_1, t_2 \in \cT$ such that $\psi_\cD(t_1) = \psi_\cD(t_2)$. Let us denote $W^{(1)}_k = \desc_{t_1}(D_k)$ and $W^{(2)}_k = \desc_{t_2}(D_k)$,
and let $\Lambda = W^{(1)}_k \cup W^{(2)}_k$. By Remark \ref{rem:burning}, if we start the burning
process on $\psi_\cD(t_1) \vert_\Lambda = \psi_\cD(t_2) \vert_\Lambda$ in $\Lambda$ (with the
restriction that $D_k$ is not allowed to burn), then the set of vertices that do not burn
equals both $W^{(1)}_k$ and $W^{(2)}_k$. In particular, these sets are equal, that is, $W^{(1)}_k = W^{(2)}_k$. Denoting their common value by $W_k$, we have
\eqnst
{ \psi_{W_k,k}(t_1\vert_{\sfE^*_{W_k,k}})
  = \psi_\cD(t_1)\vert_{W_k}
  = \psi_\cD(t_2)\vert_{W_k}
  = \psi_{W_k,k}(t_2\vert_{\sfE^*_{W_k,k}}). }
Hence $t_1$ equals $t_2$ on $\sfE^*_{W_k,k}$. Since $k$ is arbitrary, it follows
that $t_1 = t_2$, and therefore $\psi_\cD$ is injective.

In order to see continuity, fix $t \in \cT$, let $\eta = \psi_\cD(t)$, and let
$k \ge 1$ be fixed. Let $W_k = \desc_t(D_k)$. Suppose that $t' \in \cT$ has the property that $t'$ agrees with $t$ on all edges in $\sfE$ that have an end vertex in $W_k$. Then it follows that $\desc_{t'}(D_k) = W_k$, and $t'_{W_k,k} = t_{W_k,k}$. Therefore \eqnst
{ \psi_\cD(t')\vert_{W_k}
  = \psi_{W_k,k}(t'_{W_k,k})
  = \psi_{W_k,k} (t_{W_k,k})
  = \psi_\cD(t)\vert_{W_k}. }
Since $k \ge 1$ is arbitrary, $W_k \supset D_k$ and $\cup_{k \ge 1} D_k = V$, this implies
continuity of $\psi_\cD$.
\end{proof}

The following lemma follows directly from the proof of \cite[Theorem 3]{JW(12)}. We provide a
sketch of the proof for the reader's convenience.

\begin{lemma}
\label{lem:conv}
The image of $\mathsf{WSF}$ under $\psi_\cD$ equals $\nu = \lim_{\Lambda \uparrow V} \nu_\Lambda$.
\end{lemma}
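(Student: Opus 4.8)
The plan is to exploit that for each finite $\Lambda$ the map $\varphi_{\cD,\Lambda}$ of Lemma \ref{lem:psi} is a bijection between the equinumerous finite sets $\cR_\Lambda$ and $\cT_\Lambda$, so that it carries the uniform measure $\nu_\Lambda$ to the uniform spanning tree measure $\UST_{G_\Lambda}$; equivalently, writing $\psi_{\cD,\Lambda} := \varphi_{\cD,\Lambda}^{-1}$, we have $\nu_\Lambda = (\psi_{\cD,\Lambda})_\ast \UST_{G_\Lambda}$. Since the topology on $\cR$ is generated by cylinder events, it suffices to prove, for every cylinder event $E$ depending only on the heights in a finite set $F \subset V$, that $\nu_\Lambda(E) \to (\psi_\cD)_\ast \WSF(E)$ as $\Lambda \uparrow V$; this both reproves convergence of $\nu_\Lambda$ and identifies the limit as $\nu = (\psi_\cD)_\ast \WSF$. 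I fix such an $E$ and choose $k$ with $D_k \supset F$.

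The first real step is to localize both $\psi_{\cD,\Lambda}$ and $\psi_\cD$ through the descendant sets. Writing $W = \desc_t(D_k)$, Lemma \ref{lem:consistent} together with the construction of $\psi_\cD$ gives, for every $\Lambda \supset W$ and every $t$, that $\psi_{\cD,\Lambda}(t) \in E$ if and only if $\psi_{W,k}(t_{W,k}) \in E$, and likewise $\psi_\cD(t) \in E$ if and only if $\psi_{W,k}(t_{W,k}) \in E$, the point being that $E$ only sees $F \subset D_k \subset W$. I would then decompose over the value of the (finite, simply connected) descendant set:
\[
\nu_\Lambda(E) = \sum_{W} \UST_{G_\Lambda}\bigl( \desc_t(D_k) = W,\ \psi_{W,k}(t_{W,k}) \in E \bigr),
\]
and identically for $(\psi_\cD)_\ast \WSF(E)$ with $\UST_{G_\Lambda}$ replaced by $\WSF$. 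Here I use the crucial observation that each event $\{\desc_t(D_k)=W\}\cap\{t_{W,k}=\hat t\}$ is a \emph{cylinder} event in $t$: orienting all edges towards the end (towards $s$ in the finite case), the one-end property forces every $t$-edge leaving $W$ to emanate from $D_k$, so both the descendant set and the restriction $t_{W,k}$ are determined by the finitely many edges of $t$ incident to $W$ (equivalently, $t_{W,k}$ is a spanning tree of $G^*_{W,k}$ and no $t$-edge joins $W\setminus D_k$ to $V\setminus W$).

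For each fixed $W$ and each admissible restricted tree $\hat t$ with $\psi_{W,k}(\hat t)\in E$, the corresponding cylinder probability converges, $\UST_{G_\Lambda}(\desc_t(D_k)=W,\, t_{W,k}=\hat t) \to \WSF(\desc_t(D_k)=W,\, t_{W,k}=\hat t)$, directly from the weak convergence $\UST_{G_\Lambda}\Rightarrow\WSF$ defining $\WSF$. It then remains to interchange the limit $\Lambda \uparrow V$ with the sum over $W$. This is the step I expect to be the main obstacle: I need a tail bound uniform in $\Lambda$, namely
\[
\lim_{R\to\infty}\ \sup_{\Lambda\supset B_R}\ \UST_{G_\Lambda}\bigl(\desc_t(D_k)\not\subset B_R\bigr)=0,
\]
where $B_R$ exhausts $V$. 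Under $\WSF$ the corresponding probability tends to $0$ because condition \eqref{e:one-end} makes $\desc_t(D_k)$ finite almost surely, so $\sum_W\WSF(\desc_t(D_k)=W)=1$; the difficulty is transferring this uniformly to the finite graphs, which is precisely the delicate estimate supplied by the proof of \cite[Theorem 3]{JW(12)}.

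To establish this uniform tightness I would run Wilson's algorithm rooted at $s$: a vertex $v$ lies in $\desc_t(D_k)$ exactly when the loop-erased walk from $v$ to $s$ meets $D_k$, so $\{\desc_t(D_k)\not\subset B_R\}$ is contained in the event that some simple random walk started outside $B_R$ reaches $D_k$ before leaving $\Lambda$. The one-end property is exactly what guarantees that the past of $D_k$ has uniformly small tail, bounding the sum over $v\notin B_R$ of such hitting probabilities independently of $\Lambda$. Granting this, dominated convergence for the series gives $\nu_\Lambda(E)\to (\psi_\cD)_\ast\WSF(E)$, and since $E$ was an arbitrary cylinder event we conclude that the image of $\WSF$ under $\psi_\cD$ equals $\nu=\lim_{\Lambda\uparrow V}\nu_\Lambda$.
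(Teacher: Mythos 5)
Your overall strategy is the same as the paper's: decompose $\nu_\Lambda(E)$ over the value $W$ of the descendant set $\desc_t(D_k)$, use Lemma \ref{lem:consistent} to replace $E$ by the local event $\{\psi_{W,k}(t_{W,k})\in E\}$, obtain term-by-term convergence from the fact that $\{\desc_t(D_k)=W,\ t_{W,k}=\hat t\}$ is a cylinder event in the tree, and justify the interchange of limit and sum by tightness of the descendant sets. Up to the last step this is correct and matches the paper's argument (the paper phrases the decomposition on the sandpile side, conditioning on $W_{\Lambda,k}$ and identifying the conditional law of $\eta_W$ as $\nu_{G^*_{W,k}}$, but it is the same computation).

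The gap is in your proposed proof of the uniform tightness. Bounding $\UST_{G_\Lambda}\bigl(\desc_t(D_k)\not\subset B_R\bigr)$ by $\sum_{v\notin B_R}\P^v(\text{SRW hits $D_k$ before leaving $\Lambda$})$ is a first-moment (union) bound, and it is vacuous: already on $\Z^d$, $d\ge 3$, one has $\P^v(\text{SRW hits $D_k$})\asymp (k/|v|)^{d-2}$, so the sum over $v\notin B_R$ behaves like $k^{d-2}\sum_{r>R} r$ and diverges; the expected size of the past of $D_k$ is infinite, so no first-moment argument can establish tightness. (Restricting the sum to the boundary $\partial B_R$ does not help either, since $|\partial B_R|\,(k/R)^{d-2}\asymp k^{d-2}R\to\infty$; a genuine decay bound such as Proposition \ref{prop:cond} needs the conductance martingale of \cite{LMS(08)}, not a union bound.) Moreover, on a general graph the hypothesis \eqref{e:one-end} is a qualitative statement about $\WSF$ and yields no quantitative random walk estimate. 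The correct route --- the one the paper's sketch relies on --- is softer: for fixed $M$ the event $\{\desc_t(D_k)\subset D_M\}$ is a finite union of cylinder events, so $\UST_{G_\Lambda}(\desc_t(D_k)\subset D_M)\to\WSF(\desc_t(D_k)\subset D_M)$ as $\Lambda\uparrow V$, and the latter tends to $1$ as $M\to\infty$ precisely because \eqref{e:one-end} makes $\desc_T(D_k)$ a.s.\ finite. A two-step choice of $M$ and then $\Lambda$ gives, for every $\eps>0$, some $M$ and $\Lambda_0$ with $\UST_{G_\Lambda}(\desc_t(D_k)\not\subset D_M)<\eps$ for all $\Lambda\supset\Lambda_0$, which is all the uniformity the interchange requires. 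With that replacement your proof goes through.
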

%\qed
\begin{proof}[Sketch of the proof.]
Let $E$ be a cylinder event that only depends on the sandpile heights in $D_k$ for some $k \ge 1$. 
For any $\Lambda \supset D_k$, let $W_{\Lambda,k}$ be the random set of vertices that are 
unburnt just before the phase in which we first allow vertices in $D_k$ to burn, that is,
$U^{(K-k+2)}_0$. Due to Lemma \ref{lem:U=desc}, $W_{\Lambda,k}$ also equals the set of 
descendants of $D_k$ in $\psi_{\cD,\Lambda}^{-1}(\eta_\Lambda)$, where $\eta_\Lambda$ is 
the sandpile configuration in $\Lambda$.
Recall the auxiulliary graph $G^*_{W,k}$ from Definition \ref{def:G*}.
Due to the proof of Lemma \ref{lem:consistent}(i), for any fixed set $D_k \subset W \subset \Lambda$,
the conditional distribution of $\eta_W$, given the event $\{ W_{\Lambda,k} = W \}$ is
given by $\nu_{G^*_{W,k}}$. Hence, conditioning on the value of $W_{\Lambda,k}$, we have:
\eqnspl{e:W-decomp}
{ \nu_\Lambda ( E )
  = \sum_{D_k \subset W \subset \Lambda} \nu_\Lambda ( W_{\Lambda,k} = W ) 
    \nu_{G^*_{W,k}} ( \eta_W \in E ). } 
%Note that the expression $\eta_W \in E$ makes sense, since $W \supset D_k$ and $E$ only depends on
%heights in $D_k$.
Note that, in the notation of Lemma \ref{lem:consistent}, we have
\eqnsplst
{ \nu_{G^*_{W,k}}( \eta_W \in E )
  &= \UST_{G^*_{W,k}} ( t : \psi_{W,k}(t) \in E ) \\
  &= \WSF ( t : \psi_{W,k}(t_{W,k}) \in E \,|\, \desc_t(D_k) = W ) \\
  &= \WSF ( t : \psi_\cD(t) \in E \,|\, \desc_t(D_k) = W ). }
In particular, this probability does not depend on $\Lambda$.
We also have 
\eqnst
{ \lim_{\Lambda \uparrow V} \nu_\Lambda ( W_{\Lambda,k} = W ) 
  = \lim_{\Lambda \uparrow V} \UST_\Lambda ( t : \desc_t(D_k) = W )
  = \WSF ( t : \desc_t(D_k) = W ). }
This is because for a fixed finite set $W$, the event $\desc_t(D_k) = W$ is spanning-tree-local:
it only depends on the status of the edges in $\sfE^*_{W,k}$. Finally, note that due to the 
one-end property \eqref{e:one-end} the family $\{ W_{\Lambda,k} : \Lambda \supset D_k \}$ is tight, 
in the sense that 
\eqnst
{ \lim_{M \to \infty} \sup_{\Lambda \supset D_M} 
    \UST_\Lambda ( t : \desc_t(D_k) \not\subset D_M )
  = 0. }
This allows us to pass to the limit in \eqref{e:W-decomp} and obtain
\eqnsplst
{ \lim_{\Lambda \uparrow V} \nu_\Lambda(E)
  &= \nu(E) \\
  &= \sum_{\substack{W : \text{$W$ is finite} \\ W \supset D_k}} \WSF ( t : \desc_t(D_k) = W ) \, 
    \WSF ( \psi_\cD(t) \in E \,|\, \desc_t(D_k) = W ) \\
  &= \WSF ( t : \psi_\cD(t) \in E ). }
% Firstly define an event $E_{\Lambda,W,k}$ as the event that when we can burn any vertex outside 
% $D_k$ in $\Lambda$, all of the vertices in  $\lambda \setminus W$ burn. We also define an auxillary 
% graph $G^*_W$ by keeping the vertices in $W$ and the edges between them, and including an edge to 
% the sink from each vertex in $D_k$ that had an edge to $\lambda \setminus W$.
% Note for a sandpile $\eta$ we have the equivalence  $W=\desc_{\psi_{\cD,\Lambda}(\eta)}(D_k)$ and 
% $\eta\vert_{D_k}=\xi$ if and only if $\eta \in E_{\Lambda,W,k}$, $\eta\vert_W \in \cR_{G^*_W}$ and 
% $\eta_{D_k}=\xi$. Thus  $\nu_{\Lambda}[\eta\vert_{D_k}]=\sum_W \nu_{\Lambda}[E_{\Lambda,W,k}]\nu_{G^*_W}[\eta\vert_{D_k}=\xi]$.  
% As $E_{\Lambda,W,k}$ is a spanning tree local event, due to the fact that it only depends on edges 
% connecting into $W$, we have that the summand converges as $\Lambda \uparrow V$ for finite $W$. 
% Finally observe that by the one-end property it suffices to sum over finite $W$ and hence the limit 
% of $\nu_{V_n}$ is well defined.
\end{proof}

Lemmas \ref{lem:inj}, \ref{lem:conv} imply Theorem \ref{thm:anchored-bijection}.

Our final lemma shows the coupling property \eqref{e:burn-couple}.

\begin{lemma}
\label{lem:couple}
Fix $o \in D_1$. For any $t \in \cT$ and $x \in V$ the limit
\eqnst
{ \lim_{\Lambda \uparrow V} \left[ \tau^\cD(x,\Lambda;\psi_\cD(t))
    - \tau^\cD(o,\Lambda;\psi_\cD(t)) \right]
  =: \tau^\cD(x;\eta) \in \Z }
exist.
\end{lemma}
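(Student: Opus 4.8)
The plan is to prove the stronger statement that the difference $\tau^\cD(x,\Lambda;\eta)-\tau^\cD(o,\Lambda;\eta)$, where $\eta=\psi_\cD(t)$, is \emph{eventually constant} in $\Lambda$, rather than merely convergent. Fix $k_0\ge 1$ large enough that $x,o\in D_{k_0}$ (possible since $\cup_k D_k=V$), and set $W:=\desc_t(D_{k_0})=W_{k_0}$, which is finite by the one-end property \eqref{e:one-end}. I will show that for every finite $\Lambda\supset W$ the difference takes a single value, depending only on $t$ and $k_0$; since $\Lambda\uparrow V$ eventually contains $W$, the limit then exists and equals this integer.

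First I would locate where $x$ and $o$ burn. For $\Lambda\supset W$ we have $D_{k_0}\subset W\subset\Lambda$, hence $K=\max\{k:D_k\subset\Lambda\}\ge k_0$, so the phase $K-k_0+1$ (which forbids $D_{k_0}$) occurs among the phases $1,\dots,K+1$. By Lemma \ref{lem:U=desc}, $U^{(K-k_0+2)}_0=\desc_{\varphi_{\cD,\Lambda}(\eta)}(D_{k_0})$, and by Remark \ref{rem:burning} this set equals $W$. Thus all of $W$ is still unburnt at the start of phase $K-k_0+2$ and is consumed during the last $k_0$ phases $K-k_0+2,\dots,K+1$; in particular, since $x,o\in D_{k_0}\subset W$, both $x$ and $o$ burn in one of these phases, so $i_x,i_o\ge K-k_0+2$, where $x\in B^{(i_x)}_{j_x}$ and $o\in B^{(i_o)}_{j_o}$.

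Next I would argue that the burning within $W$ during these last $k_0$ phases is intrinsic, i.e.\ $\Lambda$-independent. During phases $K-k_0+2,\dots,K+1$ the unburnt set is contained in $W$, and for $v\in W$ the burnability test $\eta(v)\ge\deg_U(v)$ counts only unburnt neighbours of $v$, all of which lie in $W$; edges from $v$ to already-burnt vertices of $\Lambda\setminus W$ or to the sink contribute $0$, exactly as the redirected boundary edges of $G^*_{W,k_0}$ do. Hence, as in the proof of Lemma \ref{lem:consistent}(i), after the reindexing $i\mapsto i':=i-(K-k_0+1)$ these phases coincide with the anchored burning of $\eta|_W=\psi_{W,k_0}(t_{W,k_0})$ on $G^*_{W,k_0}$ (local phase $i'$ forbidding $D_{k_0-i'}$). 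I must note that I need not just the resulting tree but the full \emph{temporal} block structure; this is reproduced because the burnability test depends only on $\deg_U$ with $U\subset W$. Denote the resulting local blocks by $\widehat B^{(i')}_j$; they are determined by $t_{W,k_0}$ and do not depend on $\Lambda$.

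Finally I would compute the difference. Since $\tau^\cD(x,\Lambda;\eta)$ is the rank of $(i_x,j_x)$ in the lexicographic ordering of the non-empty blocks, the difference $\tau^\cD(x,\Lambda;\eta)-\tau^\cD(o,\Lambda;\eta)$ is the (signed) number of non-empty blocks $B^{(i)}_j$ lying lexicographically between $(i_o,j_o)$ and $(i_x,j_x)$. As both endpoints have $i\ge K-k_0+2$, every intermediate block also has $i\ge K-k_0+2$ and is therefore one of the local blocks $\widehat B^{(i')}_j$. Thus the count equals the signed number of non-empty $\widehat B^{(i')}_j$ between $(i'_o,j_o)$ and $(i'_x,j_x)$, which by the previous paragraph is a fixed quantity independent of $\Lambda\supset W$. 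The main obstacle is precisely this decoupling step: one must be sure the tail of the process on $W$ is reproduced identically — as a sequence of timed blocks, not merely as a final spanning tree — for all large $\Lambda$. This rests on the $\Lambda$-independence already packaged in Lemma \ref{lem:consistent}(i) and Remark \ref{rem:burning}, and the only additional verification needed is the elementary observation that burnability inside $W$ depends solely on $\deg_U$ with $U\subset W$, so that exterior edges (whether absent in $G^*_{W,k_0}$ or present but burnt in $G_\Lambda$) play no role.
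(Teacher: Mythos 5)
Your proposal is correct and follows essentially the same route as the paper: the paper's proof also fixes $k$ with $x\in D_k$ (and $o\in D_1\subset D_k$), sets $W=\desc_t(D_k)$, and invokes Remark \ref{rem:burning} to conclude that for every $\Lambda\supset W$ the final phases of the burning have identical history, so the difference of burning times is eventually constant. You have merely spelled out in more detail why that history is $\Lambda$-independent (burnability inside $W$ depends only on unburnt neighbours, which lie in $W$) and how the lexicographic rank difference reduces to a count of intrinsic blocks.
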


\begin{proof}
Let $k \ge 1$ be the smallest index such that $x \in D_k$, let $W = \desc_t(D_k)$, and suppose that $\Lambda \supset W$.
Due to Remark \ref{rem:burning}, for any such $\Lambda$
the last $k+1$ phases of the burning of $\eta_\Lambda$
have identical history. This implies the claim.
\end{proof}

\section{Rate of convergence in $\mathbb{Z}^d$, $d\geq 3$.}
\label{sec:d>=3}

Henceforth we consider the graphs $G = \Z^d$, and in this section we assume $d \ge 3$.
Let $D_k$ be the intersection of the Euclidean ball of radius $k$ about the origin with $\Z^d$. We write $\partial W$ for the set of vertices in $W^c := \Z^d \setminus W$ that have a neighbour in $W$.

Let $\Lambda \subset \Z^d$ be finite. We consider the realizations of $\WSF$ and 
$\UST_{G_\Lambda}$ via stacks of arrows, as introduced by Wilson \cite{W(96)}; 
see also \cite{LPbook}. To each vertex $v \in \Z^d$ we associate an i.i.d.~sequence of arrows
$\{ e^v_i : i = 1, 2, \dots \}$, where $e^v_i$ is an oriented edge with $\tail(e^v_i) = v$ and $\head(e^v_i)$ uniformly 
random among the neighbours of $v$. The stacks associated to different $v$ are independent. 
We define $\P$ as the underlying probability measure for the stacks of arrows. We say that 
$e^v_i$ has colour $i$, and we envision $e^v_1$ lying directly above $e^v_2$ in the stack, 
and similarly, for all $k$, $e^v_k$ lying directly above $e^v_{k+1}$.
%The arrow initially in position $i \ge 1$ in the stack of some $v$ is said to have \emph{colour $i$}. 
%We envision the arrow initially in position $1$ to lie on the top of the stack, the one with colour $2$ beneath it, and so on.
An oriented cycle $\cC$ in $\Z^d$ is associated the \emph{weight} $w(\cC) = (2d)^{-|\cC|}$,
where $|\cC|$ denotes the number of arrows in $\cC$. Sometimes
we will need to consider \emph{coloured cycles}, that is, a cycle consisting 
of some arrows $e^{v_1}_{i_1}, \dots, e^{v_r}_{i_r}$.
We will use bold characters, like $\bC$, to denote coloured cycles.
In this case, $\cC$ will denote the cycle obtained from $\bC$ by ignoring the colours.

Wilson's algorithm \cite{W(96)} is based on the idea of \emph{cycle popping} that we now decribe. 
We start with a configuration of stacks of arrows, as described above. We say that 
initially $e^v_i$ is \emph{in position $i$}. We refer to the arrows in position $1$ as
lying on top of the stack. Suppose that arrows $e^{v_1}_1, \dots, e^{v_r}_1$ on top of the
stacks form a coloured cycle $\bC$. By \emph{popping} $\bC$, we mean removing the arrows in $\bC$
from the stacks, and shifting the positions of the arrows beneath them upwards. That is:
after popping $\bC$, $e^{v_j}_k$ will be in position $k-1$ for $j = 1, \dots, r$,
$k \ge 2$. Similarly, if at any later time some arrows $e^{v_1}_{i_1}, \dots, e^{v_r}_{i_r}$ 
are all in position $1$ and form an oriented cycle $\bC$, we may pop them and shift
the arrows beneath them upwards.
 
As shown in \cite{W(96)}, with probability $1$, only finitely many coloured cycles 
contained in $\Lambda$ can be popped, and on this event, regardless of what order of 
popping is chosen, the same set of coloured cycles get popped. Moreover, the arrows 
that are left on top of the stacks when no more cycles can be popped form a spanning 
tree of $G_\Lambda$ (oriented towards $s$), that also does not depend on the order
of popping. Furthermore, the set of coloured cycles popped and the tree obtained
are independent, and the tree is distributed according to $\UST_{G_\Lambda}$.
%A cycle is popped by removing the top arrow from the stack of each vertex that is visited by the cycle. If there is a choice of two cycles on top of the stacks then they can not share a vertex so the order that the cyles are popped does not change the final configuration obtained after both poppings have taken place. Once there are no cycles on top of the stacks then using the arrows on top we produce a spanning tree.
% Moreover, the set of coloured cycles popped and the spanning tree obtained are independent. 

Cycle popping can also be made sense of in $\Z^d$, $d \ge 3$. 
One way is to use loop-erased random walks (LERW), as in \cite[Theorem 5.1]{BLPS(01)},
known as Wilson's method rooted at infinity.
Given a finite path $\pi = [x_0, \dots, x_M]$ in $\Z^d$, its \emph{loop-erasure} $\LE(\pi)$
is defined by chronologically erasing cycles from the path, as they are 
created; see \cite{LPbook}. Loop-erasure also makes sense for infinite paths
$\pi$, as long as $\pi$ visits every vertex finitely often.
To describe Wilson's method rooted at infinity, order the vertices of $\Z^d$ 
arbitrarily as $v_1, v_2, \dots$. Starting from $v_1$, follow the arrows on top 
of the stacks, and whenever a cycle is completed, pop that cycle. The trajectory 
traced by this walk is a simple random walk $\{ S^{(1)}(m) \}_{m \ge 0}$ under $\P$, 
so due to transience, every vertex is visited only finitely many times, with probability $1$. 
Hence, on this event, there is a well-defined configuration of stacks of un-popped 
arrows, after the entire trajectory of $S^{(1)}$ has been traced. 
On top of the stacks now lie $\mathfrak{F}_1 := \LE(S^{(1)}[0,\infty))$, and unexamined
arrows everywhere else. Next, starting from $v_2$, again follow the arrows on top of the 
stacks, popping any cycles that are completed. The trace of the path will now be
a random walk $S^{(2)}[0,\tau^{(2)}]$, where $\tau^{(2)} \in [0,\infty]$ is the first 
hitting time of $\mathfrak{F}_1$. Upon hitting $\mathfrak{F}_1$, a segment of 
$\mathfrak{F}_1$ is retraced without encountering any further cycle, and on top of
the stacks will lie $\mathfrak{F}_2 := \mathfrak{F}_1 \cup \LE(S^{(2)}[0,\tau^{(2)}])$,
with unexamined arrows everywhere else. Continue this way with $v_3, v_4, \dots$.
With probabilty one, from each stack only finitely many arrows get popped, 
hence the procedure reveals a random spanning forest $T$. Due to \cite[Theorem 5.1]{BLPS(01)},
$T$ is distributed according to $\WSF$. We will also need the following alternative way of
popping cycles in $\Z^d$:
\eqn{e:popping}
{ \parbox{13cm}{first pop all cycles contained in $D_1$, then pop all cycles 
    contained in $D_2$, etc.} } 
Wilson's proof for finite graphs \cite{W(96)} can be adapted to show that on the 
probability $1$ event when $T$ is well-defined, the procedure \eqref{e:popping}
reveals exactly the same forest $T$. In particular, for any finite $\Lambda \subset \Z^d$, 
cycle popping in $\Lambda$ also terminates with probability $1$, resulting in
a spanning tree $T_\Lambda$, distributed according to $\UST_{G_\Lambda}$. 
% any popping procedure, cycle popping in $\Z^d$ succeeds with probability $1$, 
% in the sense that from each stack only finitely many arrows get popped, and both the set 
% of coloured cycles popped and the resulting spanning forest are independent of the procedure. 
% Moreover, using the convergence $\UST_{D_k} \Rightarrow \WSF$, as $k \to \infty$, 
% we get that the spanning forest revealed by the procedure \eqref{e:popping}, and hence by any
% other procedure, is distributed according to $\WSF$. Further, on the event when cycle popping succeeds in $\Z^d$,
% it also succeeds in any finite $\Lambda \subset \Z^d$. Let us write $T$ for the (random) 
% element of $\cT$ determined by cycle popping in $\Z^d$ (defined with probability $1$), 
% and let us write $T_\Lambda \in \cT_{G_\Lambda}$ for the spanning tree determined by 
% cycle popping in $\Lambda$. 
Thus, using the same stacks of arrows for cycle popping 
in $\Lambda$ and in $\Z^d$ provides the required coupling of $\WSF$ 
and $\UST_{G_{\Lambda}}$.

Given a cylinder event $E \subset \{0, \dots, 2d-1 \}^{D_k}$ only depending
on sandpile heights in $D_k$, let us write $E_{\Z^d} = \{ \psi_\cD(T) \in E \}$ 
and $E_\Lambda = \{ \psi_{\cD,\Lambda}(T_\Lambda) \in E \}$.
We have $\P (E_\Lambda) = \nu_\Lambda(E)$, due to Lemma \ref{lem:psi} and 
$\P (E_{\Z^d}) = \nu(E)$, due to Lemma \ref{lem:conv}.

\begin{theorem}
\label{thrm:d>2}
Let $E$ be a cyclinder event depending only on the sandpile heights in $D_k$.
Let $d \ge 3$, let $\Lambda \subset \Z^d$ be a finite set and let $N$ be the radius of the largest ball centered at the origin that is contained in $\Lambda$. We have
\eqnst
{ \left| \nu_\Lambda(E) - \nu(E) \right|
  \le \P( E_\Lambda \Delta E_{\Z^d} )
  \le \begin{cases}
      C_d k^{d-1} N^\frac{2-d}{2d} & \text{if $d \geq 5$;} \\
      C k^{26/9} N^{-2/9}          & \text{if $d = 4$;} \\
      C k^{17/9} N^{-1/9}          & \text{if $d = 3$.}
      \end{cases} }
Here $\Delta$ denotes symmetric difference.
\end{theorem}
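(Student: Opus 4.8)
The plan is to bound the probability that the two cylinder events $E_\Lambda$ and $E_{\Z^d}$ disagree by the probability of a ``bad'' coupling event under which the spanning tree $T_\Lambda$ (obtained by popping cycles only inside $\Lambda$) differs from the wired forest $T$ (obtained by popping in all of $\Z^d$) in a way that can influence the sandpile heights in $D_k$. The crucial observation is that, because both $\psi_\cD$ and $\psi_{\cD,\Lambda}$ are determined by the burning process, the heights in $D_k$ depend on $T$ only through $t_{W_k,k}$, the restriction of the tree to the set $W_k = \desc_T(D_k)$ together with its connections to the sink. Hence $E_\Lambda = E_{\Z^d}$ whenever the coupled trees satisfy $\desc_{T_\Lambda}(D_k) = \desc_T(D_k) =: W_k$, with $T_\Lambda$ and $T$ agreeing on the induced edges, \emph{and} $W_k$ lies well inside $\Lambda$ so that the wiring of $G^*_{W_k,k}$ is the same in both graphs. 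Therefore
\eqnst
{ \P(E_\Lambda \Delta E_{\Z^d})
  \le \P\big( \text{$T_\Lambda$ and $T$ disagree in a way affecting $\desc(D_k)$} \big). }

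The first step is to make precise which cycle-popping events cause a discrepancy. Using the coupling via shared stacks of arrows and the ordered popping rule \eqref{e:popping}, the tree in a neighbourhood of $D_k$ is unaffected by truncation at $\Lambda$ unless some loop-erased walk started inside (or reaching) $D_k$ travels out to $\partial \Lambda$, i.e.\ unless $\desc_T(D_k)$ reaches near $\partial \Lambda$, or the wired boundary condition is felt. So the core estimate I would isolate is a bound on $\P(\desc_T(D_k) \not\subset D_M)$ for $M$ of order $N$, which by the one-end property tends to $0$; the quantitative rate is exactly what drives the exponents. First I would reduce the discrepancy probability to the probability that the past/descendant cluster of $D_k$ in the $\WSF$ (equivalently, the loop-erased trajectories emanating from $D_k$ towards infinity, before they merge into the forest far away) extends to distance of order $N$ from the origin.

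The second, computational, step is to estimate this extension probability. For a single vertex $v \in D_k$, the relevant path is governed by a loop-erased random walk (the finite graph case) or by Wilson's method rooted at infinity. I would bound $\P(\desc_T(D_k) \text{ reaches } \partial B_N)$ by a union bound over $v \in D_k$ (contributing a factor $|D_k| \asymp k^d$, or $k^{d-1}$ after a sharper surface count) times the probability that a single LERW, or the SRW generating it, from a point within distance $k$ of the origin exits $B_N$ without the forest structure closing off. For $d \ge 5$ the escape probability of a single trajectory is controlled by Green-function/hitting estimates giving a polynomial decay of order $N^{(2-d)/(2d)}$; the fractional exponent suggests an optimization balancing the probability that the walk travels far against the number of candidate starting points, likely via a second-moment or hitting-time estimate on intermediate shells $B_{N^\theta}$. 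The cases $d = 4$ and $d = 3$ require known sharper results on intersection probabilities and LERW growth exponents (from \cite{BLPS(01),LMS(08),LPbook}), which is why the exponents there are different and tied to specific numerical values ($2/9$, $1/9$).

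The main obstacle I expect is the second step: obtaining the precise polynomial rates, especially in the low-dimensional cases $d = 3, 4$ where the wired forest is a single tree and two loop-erased walks have nontrivial intersection probabilities, so the descendant set is genuinely large and its diameter must be controlled using refined random-walk and capacity estimates rather than a crude transience argument. In particular one must rule out that, conditionally on the coupling, a rare long excursion of the LERW from $D_k$ interacts with $\partial \Lambda$; quantifying how such an excursion is suppressed is where the dimension-dependent exponents come from, and the balancing of the combinatorial factor $k^{d-1}$ (or $k^{26/9}$, $k^{17/9}$) against the geometric decay is the delicate part. The reduction in the first step is conceptually clean given Lemma \ref{lem:consistent} and Remark \ref{rem:burning}; the quantitative heart of Theorem \ref{thrm:d>2} is the hitting/escape estimate, which I would expect to be stated separately (and is presumably where \eqref{e:popping} and the cited LERW results are invoked).
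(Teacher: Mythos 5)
Your first reduction is the same as the paper's: with the shared stacks-of-arrows coupling, $E_\Lambda$ and $E_{\Z^d}$ agree whenever $W_{k,\Lambda}=W_k$ and the two trees agree on $\sfE^*_{W,k}$, so it suffices to bound the probability of the complementary event. But your second step contains a genuine gap: you reduce the discrepancy probability to $\P(\desc_T(D_k)\not\subset D_M)$ for $M$ of order $N$, i.e.\ you take the only failure mode to be the descendant set reaching near $\partial\Lambda$. That is not enough. Even when $W_k$ is contained in a much smaller ball $D_n$, the cycles that must be popped outside $\Lambda$ in order to pass from $T_\Lambda$ to $T$ can propagate back and alter the arrows on $W_k\cup\partial W_k$, so the two trees can disagree on $\sfE^*_{W,k}$ while $W_k$ stays deep inside $\Lambda$. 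The paper devotes Proposition \ref{prop:W_k-bnd} to exactly this event: it shows that the collection $\fL$ of additional cycles is independent of the tree and weighted proportionally to $\prod_j w(\cC_j)$, joins the offending cycles into a single loop based at a vertex of $\partial W_k$ that exits $\Lambda$, and bounds the total weight of such loops by a Green-function estimate, yielding $C_d k^{d-2}n^2/(N-n)^{d-2}$. This term is not negligible: for $d=3,4$ it must be balanced against the descendant-diameter term by optimizing an intermediate radius $n$ ($n=k^{4/9}N^{8/9}$ for $d=4$, $n=k^{6/13}N^{6/13}$ for $d=3$), and this balancing is precisely where the exponents $2/9$ and $1/9$ come from. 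Your outline has no intermediate scale and therefore cannot produce these exponents.

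A secondary issue: your proposed proof of the diameter estimate --- a union bound over $v\in D_k$ times a single-trajectory escape probability --- is not how the exponent $n^{(2-d)/(2d)}$ arises and would not give it. The paper's Proposition \ref{prop:cond} adapts the conductance-martingale argument of Lyons--Morris--Schramm: condition on $T\vert_{D_k}=K$, use Feder--Mihail negative association to dominate the descendants of each component $K_j$ by the component of $K_j$ in a forest with $K_j$ wired to infinity, run the effective-conductance martingale $M_n$, and bound $\sum_j M_0(K_j)\le C_d|\partial D_k|\le C_d k^{d-1}$ (which is also where the factor $k^{d-1}$, rather than $k^d$, comes from). So the quantitative heart of the theorem consists of two separate estimates; your proposal identifies only one of them and sketches that one by a method that would not yield the stated rate.
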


The proof is broken down into a number of propositions and lemmas.
Let us write $W_k$ for the random set of descendants of $D_k$ in $T$,

\begin{proposition}
\label{prop:W_k-bnd}
Suppose $d \ge 3$, $1 \le k < n < N$, and $\Lambda \supset D_{N}$.
There is a constant $C_d > 0$ such that
\eqn{e:WSF-bnd}
{ \P \left( \text{$W_k \subset D_n$ but $W_k \not= W_{k,\Lambda}$ or
     $T\vert_{\sfE^*_{W,k}} \not= T_\Lambda\vert_{\sfE^*_{W,k}}$} \right)
  \le C_d \frac{k^{d-2} n^2}{(N-n)^{d-2}}. }
\end{proposition}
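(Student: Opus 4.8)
The plan is to work entirely inside the stacks-of-arrows coupling set up above, under which both $T$ (distributed as $\WSF$) and $T_\Lambda$ (distributed as $\UST_{G_\Lambda}$) are read off the \emph{same} arrow stacks, using the popping order \eqref{e:popping}. The first and most delicate step is a \emph{reduction}. On the event $\{W_k \subset D_n\}$ the descendant subtree of $D_k$ is confined to $D_n$ by Lemma \ref{lem:U=desc}, so every loop-erased branch feeding into $D_k$ lies inside $D_n \subset \Lambda$. The two arrow fields for $\Zd$ and for $G_\Lambda$ are literally identical except that in $G_\Lambda$ a walk is absorbed at the sink the first time it leaves $\Lambda$. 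Consequently, I would argue that a discrepancy (either $W_k \ne W_{k,\Lambda}$, or $T\vert_{\sfE^*_{W,k}} \ne T_\Lambda\vert_{\sfE^*_{W,k}}$ with $W = W_k$) can only arise through a generating walk in Wilson's algorithm that makes an excursion out to $\partial D_N$ (hence leaves $\Lambda$, since $D_N \subset \Lambda$) and \emph{returns} to attach to $D_k$: in $\Zd$ that excursion is erased and the vertex remains a descendant, whereas in $G_\Lambda$ the walk is absorbed and the vertex is lost from the descendant tree. I expect this reduction to be the main obstacle, as it requires running Wilson's algorithm in the two graphs simultaneously and checking that, absent such an out-and-back excursion, every popped cycle and every revealed arrow relevant to $W_k$ and to $t_{W,k}$ coincides in $\Zd$ and $G_\Lambda$.

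Granting the reduction, the bad event is contained in the existence of a Wilson trajectory that is anchored to $D_k$ yet reaches $\partial D_N$ before resolving. I would control its probability by a first-moment estimate over the starting vertex $x \in D_n$,
\[
\P(\text{bad event}) \;\le\; C_d \sum_{x \in D_n} \P_x\!\left( \text{the walk hits } D_k \text{ after an excursion to } \partial D_N \right),
\]
and then split each summand by the strong Markov property at the successive crossings of $\partial D_n$ and $\partial D_N$. This isolates three scales: the walk's expected occupation of $D_n$, which contributes a Green-mass factor; the excursion that carries it out to $\partial D_N$; and the return across the annulus $D_N \setminus D_n$ that reconnects it to $D_k$, which is the rare, binding event. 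Because the walk in $\Zd$ is transient, merely exiting $D_N$ is typical; the genuinely small probability is that of coming back from distance $N$ to reattach to $D_k$, and it is this return that produces the decaying factor in \eqref{e:WSF-bnd}.

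Finally I would assemble the estimate using standard potential theory on $\Zd$, $d \ge 3$ (see \cite{LPbook,BLPS(01)}): the Green's-function bound $g(x,y) \asymp |x-y|^{2-d}$, the occupation estimate $\sum_{x \in D_n} g(0,x) \asymp n^2$, the capacity estimate $\mathrm{cap}(D_k) \asymp k^{d-2}$ governing attachment to $D_k$, and the annulus escape-and-return estimate $\asymp (N-n)^{2-d}$ for a walk that leaves near $\partial D_n$ and must traverse the gap of width $N-n$. The Markov decomposition at $\partial D_n$ and $\partial D_N$, together with Wilson's independence of the set of popped cycles from the resulting tree, is what lets me multiply these contributions; their product is exactly $C_d\, k^{d-2} n^2 (N-n)^{2-d}$, as claimed in \eqref{e:WSF-bnd}.
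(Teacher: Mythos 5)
Your high-level architecture is the same as the paper's: read $T$ and $T_\Lambda$ off the same stacks of arrows, argue that a discrepancy on $\sfE^*_{W,k}$ forces a long ``out-and-back'' excursion anchored at the descendant set, invoke the independence of the popped cycles from the resulting tree to multiply the contributions, and assemble the factor $k^{d-2}n^2$ from the hitting probability of $D_k$ summed over $D_n$ (this is exactly the paper's Lemma \ref{lem:size-of-descendants}, i.e.\ a bound on $\E[\,|W_{k,\Lambda'}|\,;\,W_{k,\Lambda'}\subset D_n]$) and the factor $(N-n)^{2-d}$ from a Green's function estimate for a loop that must cross the annulus (the paper's Lemma \ref{lem:super-loop-bnd}). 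So the quantitative endgame of your plan is sound and matches the paper.

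The genuine gap is the reduction, which you correctly identify as the main obstacle but do not carry out, and which in the form you state it is not true. A discrepancy between $T$ and $T_\Lambda$ near $W_k$ is not produced by a single generating walk of Wilson's algorithm making an excursion to $\partial D_N$ and returning to attach to $D_k$. It is produced by the extra coloured cycles $\fL$ that get popped in the larger graph but not in $\Lambda$, and these can form a cascade: a cycle exiting $\Lambda$ is popped, which exposes arrows that make a second, entirely different and possibly small cycle poppable, which exposes a third, and so on, until some small cycle deep inside $D_n$ touches $\overline{W}=W_{k,\Lambda'}\cup\partial W_{k,\Lambda'}$. No single random-walk trajectory need traverse the whole excursion, so the event ``some generating walk goes out to $\partial D_N$ and comes back'' does not contain the bad event. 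The paper repairs precisely this point: it introduces the partial order $\prec$ on $\fL$ recording which poppings enable which, takes the up-set $\fM$ of the first popped cycle meeting $\overline{W}$, and proves (Lemma \ref{lem:walk}) that the cycles of $\fM$ concatenate, injectively, into a single lattice loop $\gamma$ based at some $w\in\partial W$ that visits $\Lambda^c$, with $w(\gamma)=\prod_{\bD\in\fM}w(\cD)$; combined with the injectivity of $\fL\mapsto(\fM,\widetilde{\fL})$ (Lemma \ref{lem:fM-inj}) and the factorization \eqref{e:factorize}, this is exactly what licenses your ``multiply the contributions'' step. Without this concatenation device (or an equivalent), your first displayed inequality is unjustified; moreover, as literally written, summing $\P_x(\text{reach }\partial D_N\text{ then hit }D_k)$ over all $x\in D_n$ costs a factor $n^d$ rather than the Green-mass $n^2$, so the decomposition must be organized as in the paper — first the size of $W_k\cap D_n$, then, independently, the weight of loops from $\partial W_k$ that exit $D_N$ — to recover the stated bound \eqref{e:WSF-bnd}.
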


\begin{proof}
If we successively pop all cycles in $D_n$, then in $D_{n+1}$, then in $D_{n+2}$, etc.,
then we see that $\P$-a.s.~on the event $W_k \subset D_n$ we have $W_{k,\Lambda'} = W_k$
and $T\vert_{E^*_{W,k}} = T_\Lambda\vert_{E^*_{W,k}}$ for all large enough finite $\Lambda'$. Therefore, it is enough to show that for all finite $\Lambda' \supset \Lambda$ we have
\eqn{e:WSF-bnd2}
{ \P \left( \text{$W_{k,\Lambda'} \subset D_n$ but $W_{k,\Lambda'} \not= W_{k,\Lambda}$
     or $T_{\Lambda'}\vert_{\sfE^*_{W,k}} \not= T_\Lambda\vert_{\sfE^*_{W,k}}$} \right)
  \le C_d \frac{k^{d-2} n^2}{N^{d-2}}, }
with $C_d$ independent of $\Lambda$, $\Lambda'$.

In order to prove \eqref{e:WSF-bnd2}, we first pop all cycles we can that are contained 
in $\Lambda$. This leaves on top of the stacks in $\Lambda$ the wired spanning tree 
$T_\Lambda$ of $G_\Lambda$. Let $\fL$ denote the collection of remaining coloured cycles 
contained in $\Lambda'$ that need to be popped in order to obtain the wired spanning tree 
$T_{\Lambda'}$ in $\Lambda'$. For convenience, the cycles in $\fL$ are regarded as having 
colours according to their \emph{current positions} in the stacks, i.e.~after all cycles 
contained in $\Lambda$ have been popped. We claim that the probability distribution of 
$\fL$ is proportional to total weight and that $\fL$ is independent 
of the wired spanning tree $T_{\Lambda'}$ in $\Lambda'$, that is:
\eqn{e:factorize}
{ \P ( \fL = \{ \bC_1, \dots, \bC_K \},\, T_{\Lambda'} = t_{\Lambda'} )
  = \UST_{G_{\Lambda'}}(t_{\Lambda'}) \frac{1}{Z} \prod_{j=1}^K w(\cC_j), }
where $Z$ is a normalization factor. Indeed, we show that this follows 
from Wilson's theorem \cite{W(96)}. Let us write $\fL^0_\Lambda$, respectively
$\fL^0_{\Lambda'}$, for the collection of coloured cycles contained in 
$\Lambda$, respectively $\Lambda'$, that we need to pop in order
to reveal $T_\Lambda$, respectively $T_{\Lambda'}$. Then $\fL$ is a deterministic
function of $\fL^0_{\Lambda'}$ (recall that the colours of cycles in $\fL$
are according to their positions aquired after cycle popping in $\Lambda$
is complete). By Wilson's theorem, $T_{\Lambda'}$ is independent of $\fL^0_{\Lambda'}$,
and hence of $\fL$, and is distributed according to $\UST_{G_{\Lambda'}}$. 
Therefore, the left hand side of \eqref{e:factorize} equals
\eqnst
{ \UST_{G_{\Lambda'}}(t_{\Lambda'}) \, \P ( \fL = \{ \bC_1, \dots, \bC_K \} ). }
In order to show that the second factor is proportional to weight, first
observe that $\fL^0_{\Lambda'}$ and the pair $(\fL^0_\Lambda, \fL)$ are deterministic
functions of each other. We show that $\fL^0_\Lambda$ and $\fL$ are independent. 
This is because, using Wilson's Theorem again, $\fL^0_\Lambda$, $T_\Lambda$, the stacks 
of arrows beneath $T_\Lambda$, and the stacks of arrows in $\Lambda' \setminus \Lambda$ 
are mutually independent, and $\fL$ is a deterministic function of the latter three.
We have
\eqnst
{ \P ( \fL^0_\Lambda = \{ \bC^0_1, \dots, \bC^0_{K^0} \},\, \fL = \{ \bC_1, \dots, \bC_K \} )
  = \frac{1}{Z^0_{\Lambda'}} \times \prod_{\ell=1}^{K^0} w(\cC^0_\ell) \times \prod_{j=1}^K w(\cC_j). }
Summing over all instances of $\fL^0_\Lambda$, the independence of $\fL^0_\Lambda$ and 
$\fL$ implies 
\eqnst
{ \P ( \fL = \{ \bC_1, \dots, \bC_K \} ) 
  = \frac{1}{Z} \prod_{j=1}^K w(\cC_j). }
This proves the claim made in \eqref{e:factorize}

%By the cycle popping argument we know that the final configuration, the spanning tree, and the popped cycles are independent. Hence the probability is determined by the product of the probability of the spanning tree occurring and the probability of the cycles. The cycles are determined by the arrows and as such the probability of them occurring is determined by the weights of the cycles, up to a normalisation factor.

We introduce a partial order on elements of $\fL$ as follows:
we say that $\bC \prec \bC'$, if there exist $j \ge 1$ and a sequence of coloured cycles 
$\bC = \bC_j, \bC_{j-1}, \dots, \bC_0 = \bC'$ all in $\fL$, such that for each $1 \le r \le j$, 
the coloured cycles $\bC_{r-1}$ and $\bC_r$ share at least one vertex whose colour in 
$\bC_r$ is one greater than its colour in $\bC_{r-1}$. The meaning of the relation
$\prec$ is the following: 
\eqn{e:pop-equiv}
{ \text{$\bC \prec \bC'$} \qquad \Longleftrightarrow \qquad
  \text{regardless of the order of popping, $\bC'$ is popped before $\bC$.} } 
(Recall that the set $\fL$ does not depend on the order of popping.) The direction 
$\Longrightarrow$ of this equivalence is immediate from the definition of $\prec$.
To see the $\Longleftarrow$ direction, let us pop every cycle we can without popping
$\bC'$. This does not reveal $\bC$. Now pop $\bC'$, and note that any cycle that is
revealed as a result of popping $\bC'$ necessarily shares a vertex with $\bC'$. Popping 
further cycles it holds that any cycle that is revealed has a chain of cycles 
leading to $\bC'$. In particular, $\bC$ must have this property.
The equivalence \eqref{e:pop-equiv} makes it clear that $\prec$ is a partial order
on $\fL$.

% We check that this is indeed a 
% partial order. Clearly it is reflexive: $\bC \preceq \bC$ follows from a sequence containing 
% only $\bC$. Transitivity follows by combining the sequences: if $\bC \preceq \bC'$ via 
% the sequence $\bC = \bC_0, \dots, \bC_j = \bC'$ and $\bC' \preceq \bC''$ via the
% sequence $\bC' = \bC'_0, \dots, \bC'_{j'} = \bC''$, then we can take
% $\bC_{j+r} = \bC'_r$, $r = 0, \dots, j'$. 
% Finally, antisymmetry holds, because if $\bC \precnoteq \bC'$, and we pop all cycles we can
% then $\bC$ must be on top of the stacks but if $\bC' \prec \bC$ then $\bC'$ must also be on top of the stacks. In order for them both to be on top of the stacks and share a vertex, as cycles in sequence must have at least one shared vertex, they must share all vertices and hence $\bC=\bC'$. 

We apply a parallel popping procedure to reveal $\fL$, defined in stages. In each stage, we pop all cycles on top of the stacks, simultaneously.
If the event on the left hand side of \eqref{e:WSF-bnd2} occurs,
there exists a smallest integer $\ell \ge 1$, such that in stage
$\ell$ we pop some cycle that intersects $\overline{W} := W_{k,\Lambda'} \cup \partial W_{k,\Lambda'}$.
Indeed, if we never popped any such cycles, then the arrows
attached to all the vertices in $\overline{W}$ would have the same
direction as they had in $T_\Lambda$, which would force $W_{k,\Lambda'} = W_{k,\Lambda}$
and $T_{\Lambda'}\vert_{\sfE^*_{W,k}} = T_\Lambda\vert_{\sfE^*_{W,k}}$.
Let us select, according to some fixed arbitrary rule, a cycle
$\bD_1 \in \fL$ popped in stage $\ell$, and a vertex $w \in \bD_1 \cap \overline{W}$. Let \eqnspl{e:M-def}
{ \fM
  &:= \{ \bD \in \fL : \bD \succeq \bD_1 \}. }
Observe that $\fM$ can be popped from $\fL$ (without popping
any other cycles), since by construction, $\fM$ is closed
under domination in the partial order $\prec$. Define $\widetilde{\fL}$ 
to be the collection of coloured cycles left after popping $\fM$ from $\fL$.

\begin{lemma}
\label{lem:fM-inj}
The map $\fL \mapsto (\fM, \widetilde{\fL})$ is injective. \qed
\end{lemma}

The proof is omitted as it immediately follows from the definition of the map. 

We are going to join the cycles in $\fM$ into a single loop $\gamma$ in $\Z^d$, and then bound the probability of the
possible arising loops in Lemma \ref{lem:super-loop-bnd}
below. We set $\gamma(0) = w$. Note that by the definition of
$\bD_1$, the arrow at $w$ is at the top of its stack.
We define $\gamma$ by following the arrows, starting with the one on the top of the stack of $w$, and whenever we visit a vertex $v$ for the $i$-th time, we use the $i$-th coloured arrow at $v$. The walk stops upon the first return to $w$. We call $\gamma$ the \emph{loop associated to $\fM$}.
The purpose of the next lemma is to show that $\gamma$ is well-defined
and the map $\fM \to \gamma$ is injective.

\begin{lemma}
\label{lem:walk} Let $W \subset D_n$ be a fixed set and let $w \in \partial W$
be a fixed vertex. Suppose that $\fL$ is a collection of coloured cycles that can be popped, and $\bD_1 \in \fL$ has the property that $w \in \cD_1$,
but no coloured cycle popped at any earlier stage than $\bD_1$ intersects $\overline{W} = W \cup \partial W$. Let $\fM$ be defined by formula \eqref{e:M-def}. Then we have: \\
(i) The loop associated to $\fM$ is well-defined in that the walk does return to $w$.\\
(ii) Every coloured edge in $\fM$ is used exactly once by the loop. \\
(iii) The map $\fM \mapsto \gamma$ is injective.
\end{lemma}

\begin{proof}
(i), (ii) We prove the two statements together by induction on the number of cycles in $\fM$.
If $\fM$ consists of the single cycle $\bD_1$, the statement is trivial.
Otherwise, consider the first time we return to a vertex $v$
that we visited before. Then the cycle just found, $\bD$, say, is necessarily on top of the stacks and $\bD \not= \bD_1$. Also, since the walk starts with an
arrow belonging to a cycle in $\fM$, it is easy to see that $\bD \in \fM$. Now pop $\bD$, and define $\fL'$, $\fM'$, $\bD_1'$ by moving
the arrows in the stacks of the vertices of $\cD$ up by one (and removing
the arrows in $\bD_1$). Observe that $\fL'$, $\fM'$, $\bD_1'$ also satisfy
the hypotheses of the Lemma, so by the induction hypothesis, the walk $\gamma'$ defined by $\fM'$ visits each arrow of $\fM'$ exactly once. Hence inserting into $\gamma'$ the cycle $\cD$ at $v$
we get the walk $\gamma$ defined by $\fM$. This implies the statements (i) and (ii).

(iii) This follows from the fact that by construction, following the
history of the loop-erasure process on $\gamma$ (started at $w$)
the loops erased are precisely the loops in $\fM$.
\end{proof}

We continue with the proof of Proposition \ref{prop:W_k-bnd}. We bound
the left hand side of \eqref{e:WSF-bnd2} from above as follows.
Let $\Pi$ denote the class of all sets of coloured loops that are
possible values of $\fL$. Let $\Gamma_w$ denote the collection of loops in $\Z^d$ that start and end at $w$ and visit $\Lambda^c$.
Let $\Gamma_{w,\Lambda'}$ denote those loops in $\Gamma_w$ that stay inside $\Lambda'$. By the stated independence of the spanning tree in $\Lambda'$ and $\fL$, we have
\eqnspl{e:decompose}
{ &\P \left( \text{$W_{k,\Lambda'} \subset D_n$ and $W_{k,\Lambda'} \not= W_{k,\Lambda}$ or
    $T_{\Lambda'}\vert_{\sfE^*_{W,k}} \not= T_\Lambda\vert_{\sfE^*_{W,k}}$} \right) \\
  &\qquad \le \sum_{W \subset D_n} \mu_{\Lambda'}(W_{k,\Lambda'} = W)
    \frac{1}{Z} \ \sum_{\substack{\fL \in \Pi : \exists \bD_1 \in \fL,\\ \bD_1 \cap \partial W \not= \es}}
    \ \prod_{\bC \in \fL} w(\cC). }
We fix $W$, and estimate the sum over $\fL$. To every $\fL$ occurring in the
sum, we have associated (by our arbitrary rule), a choice of $w \in \partial W$
and $\fM \subset \fL$ containing $w$. This $\fM$, in turn determines a loop $\gamma$ based at $w$. Observe that \eqnst
{ \prod_{\bC \in \fL} w(\cC)
  = \prod_{\bD \in \fM} w(\cD) \times
    \prod_{\widetilde{\bC} \in \widetilde{\fL}} w(\widetilde{\cC})
  = w(\gamma) \times \prod_{\widetilde{\bC} \in \widetilde{\fL}} w(\widetilde{\cC}). }
Hence, using the injectivity statements in Lemma \ref{lem:fM-inj} and Lemma \ref{lem:walk}(iii), the right hand side of \eqref{e:decompose} is at most
\eqnspl{e:bnd-with-loops}
{ \frac{1}{Z} \sum_{w \in \partial W} \sum_{\gamma \in \Gamma_{w,\Lambda'}}
    w(\gamma) \sum_{\widetilde{\fL} \in \Pi}
    \prod_{\widetilde{\bC} \in \widetilde{\fL}} w(\widetilde{\cC})
  &\le \sum_{w \in \partial W} \sum_{\gamma \in \Gamma_{w,\Lambda'}}
    w(\gamma) \\
  &\le \sum_{w \in \partial W} \sum_{\gamma \in \Gamma_w}
    w(\gamma). }

\begin{lemma}
\label{lem:super-loop-bnd}
For any $w \in D_n$, we have
\eqn{e:super-loop-bnd}
{ \sum_{\gamma \in \Gamma_w}
    w(\gamma)
  \le \frac{C_d}{(N-n)^{d-2}}. } \end{lemma}

\begin{proof}
The weight of a loop is equal to the probability of each step present occurring. Therefore the sum of the weights over loops $\Gamma_w$ equals the sum of the probabilities of random walk paths that start and end at $w$ and exit $\Lambda$. Letting $S$ denote simple random walk and
$\tau_N$ the first exit time of $D_N$ we get
\begin{align*}
  \sum_{\gamma \in \Gamma_w}  w(\gamma)
  &= \sum_{m \ge 0} \sum_{z \in \partial D_N} \sum_{r > m}
  \P^w (\tau_N = m,\, S(m) = z) \P^w ( S(r) = w,\,|\, \tau_N = m,\, S(m)=z )\\
  &= \sum_{m \ge 0} \sum_{z \in \partial D_N} \P^w( \tau_N = m,\, S(m)=z ) G(z,w)\\
  &\leq \frac{C_d}{(N-n)^{d-2}} \sum_{m \ge 0} \sum_{z \in \partial D_N}
     \P^w( \tau_N = m,\, S(m)=z )\\
  &= \frac{C_d}{(N-n)^{d-2}}.
\end{align*}
Here $G(z,w)$ is Green's function, see \cite[Section 4.3]{LLbook} for a proof of the bound on $G(z,w)$. \end{proof}

Inserting \eqref{e:super-loop-bnd} and \eqref{e:bnd-with-loops} into
\eqref{e:decompose} we get
\eqnspl{e:W-bnd}
{ &\P \left( \text{$W_{k,\Lambda'} \subset D_n$ and $W_{k,\Lambda'} \not= W_{k,\Lambda}$
     or $T_{\Lambda'}\vert_{\sfE^*_{W,k}} \not= T_\Lambda\vert_{\sfE^*_{W,k}}$} \right) \\
  &\qquad \le \frac{C_d}{(N-n)^{d-2}} \E_{\mu_{\Lambda'}} \left[ |\partial W_{k,\Lambda'}| : W_{k,\Lambda'} \subset D_n \right] \\
  &\qquad \le \frac{C_d}{(N-n)^{d-2}} \E_{\mu_{\Lambda'}} \left[ |W_{k,\Lambda'}| : W_{k,\Lambda'} \subset D_n \right]. }
We estimate the right hand side in the last equation in the following lemma.

\begin{lemma}
\label{lem:size-of-descendants}
We have
\eqnst
{ \E_{\mu_{\Lambda'}} \left[ |W_{k,\Lambda'}| : W_{k,\Lambda'} \subset D_n \right]
  \le C_d k^{d-2} n^2. }
\end{lemma}

\begin{proof}
By Wilson's algorithm, the probability that a vertex $x \in D_n \setminus D_k$
is in $W_{k,\Lambda'}$ is at most the probability that simple random walk started at $x$ hits $D_k$. This is bounded by $C_d k^{d-2}/|x|^{d-2}$.
Summing over $x \in D_n$ gives
\eqnsplst
{ \E_{\mu_{\Lambda'}} \left[ |W_{k,\Lambda'}| : W_{k,\Lambda'} \subset D_n \right]
  &\le | D_k | + \E_{\mu_{\Lambda'}} \left[ |W_{k,\Lambda'} \cap (D_n \setminus D_k)|\right] \\
  &\le C_d k^d + C_d n^2 k^{d-2} \\
  &\le C_d n^2 k^{d-2}. }
\end{proof}

The above lemma and \eqref{e:W-bnd} completes the proof of Proposition \ref{prop:W_k-bnd}. \end{proof}

\begin{proposition}
\label{prop:cond}
Suppose $d\ge 3$. Then for sufficiently large $n$ we have
\begin{equation*}
  \P( W_k \not \subset D_n )
  \leq C_d k^{d-1} n^{\frac{2-d}{2d}}.
\end{equation*}
\end{proposition}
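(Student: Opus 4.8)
The plan is to bound the probability that $W_k$ (the set of descendants of $D_k$ in the infinite tree $T$ under $\WSF$) fails to be contained in the ball $D_n$. The natural way to control $W_k \not\subset D_n$ is to observe that, by Wilson's algorithm rooted at infinity, a vertex $x$ lies in $W_k$ precisely when the infinite path from $x$ in $T$ passes through $D_k$. Hence $W_k \not\subset D_n$ means there is some vertex $x \in \partial D_n$ (or just outside $D_n$) whose loop-erased walk towards infinity first visits $D_k$, equivalently, whose forward path in $T$ reaches $D_k$. The key probabilistic input is that the probability a simple random walk started at such an $x$ ever hits $D_k$ is comparable to $C_d k^{d-2}/|x|^{d-2}$, the same Green's-function-type estimate already used in Lemma \ref{lem:size-of-descendants}.

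The main steps I would carry out are as follows. First I would express $\{ W_k \not\subset D_n \}$ as the event that the tree path from some vertex on or beyond $\partial D_n$ reaches $D_k$; by the one-end property \eqref{e:one-end} this is governed by the forward trajectories, which are distributed as loop-erased random walks under Wilson's method rooted at infinity. Second, I would bound $\P(x \in W_k)$ for $x$ in the annulus $D_n^c$ by the hitting probability of $D_k$ for simple random walk started at $x$, giving $\P(x \in W_k) \le C_d k^{d-2}/|x|^{d-2}$. Third, I would sum this bound over the relevant vertices $x$ outside $D_n$, i.e.\ estimate
\eqnst
{ \P(W_k \not\subset D_n)
  \le \sum_{x \notin D_n} \P(x \in W_k)
  \le C_d k^{d-2} \sum_{x \notin D_n} \frac{1}{|x|^{d-2}}. }
The bare sum $\sum_{|x| > n} |x|^{2-d}$ diverges for $d \ge 3$, so a naive union bound is insufficient; this is where the exponent $\tfrac{2-d}{2d}$ and the phrase ``for sufficiently large $n$'' must enter.

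The hard part, and the main obstacle, is precisely handling this divergence: one cannot simply sum the hitting probabilities over all of $D_n^c$. The idea I would pursue is a truncation at an intermediate scale $R$. For vertices $x$ with $n < |x| \le R$ one uses the union bound above, which contributes on the order of $k^{d-2} R^2 / (\text{something})$ after summing $|x|^{2-d}$ over that shell (the sum over the annulus $n < |x| \le R$ is of order $R^2$ up to constants in $d \ge 3$, growing in $R$). For $|x| > R$ one needs a genuinely different argument showing it is very unlikely that \emph{any} far-away descendant reaches $D_k$ without the intervening region already forcing $W_k \subset D_n$; this should follow from the transience estimates together with the tightness of $\{W_{k,\Lambda}\}$ established in the proof of Lemma \ref{lem:conv}. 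Optimizing the choice of $R$ as a power of $n$ (balancing the polynomial growth from the near shell against the decay from the far shell) is what produces the exponent $\tfrac{2-d}{2d}$ and the factor $k^{d-1}$. I would expect the balancing of these two regimes, rather than either estimate individually, to be the delicate point of the argument.
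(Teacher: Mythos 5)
There is a genuine gap: the first-moment (union-bound) strategy you propose cannot yield the stated bound, and the truncation at an intermediate scale $R$ does not repair it. Since every descendant of $D_k$ is joined to $D_k$ by a tree path lying entirely inside $W_k$, the event $\{W_k \not\subset D_n\}$ already forces some vertex adjacent to $D_n$ to lie in $W_k$, so the sharpest union bound available from your second step is $\P(W_k \not\subset D_n) \le \sum_{x \in \partial D_n} \P(x \in W_k) \le C_d\, |\partial D_n|\, k^{d-2} n^{2-d} \asymp k^{d-2} n$, which \emph{grows} with $n$. The expected number of descendants in the first shell outside $D_n$ is already of order $k^{d-2} n \gg 1$ (consistent with Lemma \ref{lem:size-of-descendants}, which gives $\E[|W_{k,\Lambda}| : W_{k,\Lambda}\subset D_n] \le C_d k^{d-2} n^2$), so the failure is not caused by far-away vertices, and no choice of $R>n$ helps: the ``near'' regime $n<|x|\le R$ alone contributes $k^{d-2}(R^2-n^2)$, which already exceeds $1$ throughout the relevant range; there is no denominator to balance against. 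The unspecified ``far regime'' step is also not salvageable by the tightness invoked from the proof of Lemma \ref{lem:conv}, which is purely qualitative and carries no rate.

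The paper's proof is structurally different, and the difference is essential. It conditions on $T\vert_{D_k}=K$, splits $D_k$ into the connected components $K_j$ of $K$, and uses the Feder--Mihail negative-association theorem (Lemma \ref{lem:domination}) to stochastically dominate $\desc(K_j)$ by the component of $K_j$ in a spanning forest with $K_j$ wired to infinity. It then runs the effective-conductance martingale argument of \cite[Theorem 4.1]{LMS(08)}: exploring that component edge by edge, the effective conductance $M_n$ from the explored cluster to infinity is a martingale, and the event $\desc(K_j)\not\subset D_n$ forces the exploration to run for many steps, yielding $\P(\desc(K_j)\not\subset D_n \mid T\vert_{D_k}=K)\le C_d\, n^{(2-d)/(2d)} M_0(K_j)$, with $\sum_j M_0(K_j)\le C_d |\partial D_k| \le C_d k^{d-1}$. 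This argument exploits the negative correlations of the spanning tree rather than summing marginal hitting probabilities; it is where both the exponent $\tfrac{2-d}{2d}$ and the factor $k^{d-1}$ actually come from, and neither would emerge from the two-scale balancing you describe.
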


We prove this proposition by extending the argument of \cite[Theorem 4.1]{LMS(08)},
that requires a couple of alterations.

\begin{proof}
Condition on the event that the restriction of the uniform spanning forest to $D_k$,
denoted $T\vert_{D_k}$, is a fixed forest $K$. Let $K_j$, $j = 1, 2, \dots$ denote the connected components of $K$. Then
\begin{align*}
  \P( \desc(D_k) \not\subset D_n \,|\, T\vert_{D_k} = K )
  &= \P \big( \cup_j \big\{ \desc(K_j) \not\subset D_n \big\} \,\big|\, T\vert_{D_k} = K \big) \\
  &\leq \sum_j \P( \desc(K_j) \not\subset D_n \,|\, T\vert_{D_k} = K).
\end{align*}
In order to deal with the summand in the last expression, we need to generalize \cite[Lemma 3.2]{LMS(08)}.
Given a graph $G$, and $V$ a subset of the vertices, we denote by $G/V$ the graph obtained from $G$ by
identifying all the vertices in $V$ to a single vertex and removing loop-edges.

\begin{lemma}
\label{lem:domination}
Let $G$ be a finite graph containing $D_k$ as a subgraph and $s$ a vertex of $G$ with $s \not\in D_k$.
Let $T_K$ denote the uniform spanning tree of $G$ conditioned on its restriction to $D_k$ being $K$.
Let $L_j(T_K)$ denote the unique path from $K_j$ to $s$ in $T_K$. Then on the set of edges
not belonging to $K_j$, the graph $T_K \setminus L_j(T_K)$ is stochastically dominated by
the uniform spanning tree of $G / (K_j \cup \{s\})$, conditioned on the event that its restriction to $D_k/K_j$ equals $K/K_j$.
\end{lemma}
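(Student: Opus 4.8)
The plan is to reduce the conditioned statement to an unconditional domination between two uniform spanning tree measures on a single modified graph, and then to prove that domination by a Wilson's-algorithm coupling. First I would record that conditioning the uniform spanning tree of $G$ on $\{ T\vert_{D_k} = K \}$ amounts to passing to the graph $\tilde{G}$ obtained from $G$ by \emph{contracting} every edge of $K$ and \emph{deleting} every edge induced by $D_k$ that does not lie in $K$; this is the standard effect of conditioning a $\UST$ on edges being forced present, respectively forbidden. Under this operation each component $K_j$ collapses to a single vertex $\kappa_j$, the law of $T_K$ (after forgetting the fixed edges of $K$) becomes $\UST_{\tilde{G}}$, the path $L_j(T_K)$ becomes the unique path from $\kappa_j$ to $s$ in $\UST_{\tilde{G}}$, and the comparison object, the uniform spanning tree of $G/(K_j \cup \{s\})$ conditioned on its restriction to $D_k/K_j$ being $K/K_j$, becomes exactly $\UST_{\tilde{G}/\{\kappa_j,s\}}$. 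On edges not belonging to $K_j$ (equivalently, on edges of $\tilde{G}$) the assertion therefore reduces to a pure domination statement between spanning tree measures on $\tilde{G}$.

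Next I would decompose $\UST_{\tilde{G}}$ by running Wilson's algorithm rooted at $s$ with $\kappa_j$ processed first. The loop-erasure of the walk from $\kappa_j$ to $s$ produces $L_j$, and, conditionally on the value of $L_j$, the remaining tree edges, which are precisely the edges of $T_K \setminus L_j(T_K)$ that do not lie in $K_j$, are distributed as the uniform spanning tree in which the entire vertex set $V(L_j)$ of the path serves as the root, that is, as $\UST_{\tilde{G}/V(L_j)}$.

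The remaining, and main, step is the monotonicity of the uniform spanning tree under enlarging the contracted (root) set: for any $R_1 \subseteq R_2$ one has $\UST_{\tilde{G}/R_2} \preceq \UST_{\tilde{G}/R_1}$ as configurations on the edges of $\tilde{G}$. I would prove this by a coupling in which both trees are generated by Wilson's algorithm from the \emph{same} stacks of arrows and the same vertex ordering: since absorption for the $R_2$-tree occurs on a larger set, and the partially built $R_2$-tree stays contained in the partially built $R_1$-tree throughout, each Wilson trajectory for $R_2$ is an initial segment of the corresponding trajectory for $R_1$. Prefix-monotonicity of loop-erasure then forces the edges contributed to the $R_2$-tree to be a subset of those contributed to the $R_1$-tree, and an induction over the vertices processed maintains the containment $F_2 \subseteq F_1$ of the two sampled trees. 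This is exactly the generalization of \cite[Lemma 3.2]{LMS(08)} that we require, and it is here that the argument needs the stated care; the subtle point to watch is precisely that the loop-erasure of an initial segment of a path is an initial segment of the loop-erasure of the whole path, which is what powers the inductive containment. Applying this with $R_1 = \{\kappa_j,s\}$ and $R_2 = V(L_j) \supseteq \{\kappa_j,s\}$ gives, for every fixed realization of $L_j$, the domination $\UST_{\tilde{G}/V(L_j)} \preceq \UST_{\tilde{G}/\{\kappa_j,s\}}$; since stochastic domination is preserved under mixing over the conditioning variable, averaging over the law of $L_j$ yields the claim.
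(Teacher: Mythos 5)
Your overall architecture matches the paper's: condition on $L_j(T_K)=L$, identify the conditional law of the remaining edges as the uniform spanning tree of the graph with $\Vrt(L)$ contracted, prove a domination between the two contracted-graph spanning trees, and average over $L$. The gap is in your proof of the key monotonicity step. The statement itself — that $R_1\subseteq R_2$ implies $\UST_{\tilde G/R_2}\preceq \UST_{\tilde G/R_1}$ on the common edges — is true, but the shared-stacks Wilson coupling you propose does not establish it, because that coupling is not monotone. The specific fact you invoke, that the loop-erasure of an initial segment of a path is an initial segment of the loop-erasure of the whole path, is false: for the path $a\to b\to c\to b\to d$ the loop-erasure is $a\to b\to d$, while the loop-erasure of the prefix $a\to b\to c$ is $a\to b\to c$, which is not a prefix of $a\to b\to d$. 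Concretely, run Wilson's algorithm on the $4$-cycle $s-a-b-v-s$ with $R_1=\{s\}$ and $R_2=\{s,v\}$, starting at $a$, with top arrows $a\to b$ and $b\to v$: the $R_2$-tree acquires the edge $bv$ immediately, whereas if the continuation of the walk steps back from $v$ to $b$ and eventually reaches $s$ through $a$, the $R_1$-tree contains $as$, $ab$, $vs$ but not $bv$. So the two partial trees are not nested even after the first vertex is processed, and the induction you describe fails at its base case. (If the shared-stacks coupling were monotone under contraction it would, among other things, produce an explicit monotone coupling of free and wired spanning forests, which is a well-known open problem.)

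The correct tool — and what the paper uses — is the Feder--Mihail negative association theorem: conditioning a uniform spanning tree on an edge being present (equivalently, contracting that edge) makes the remaining edge set stochastically smaller. Since $\Vrt(L)$ contains both $K_j$ and $s$, the graph $\tilde G/\Vrt(L)$ is obtained from $\tilde G/\{\kappa_j,s\}$ by repeatedly contracting edges along $L$, so repeated application of Feder--Mihail yields exactly the domination $\UST_{\tilde G/\Vrt(L)}\preceq\UST_{\tilde G/\{\kappa_j,s\}}$ that you need. With that substitution, the rest of your argument — the reduction of the conditioning on $K$ to contraction and deletion, the identification of the conditional law given $L$ via Wilson's algorithm, and the averaging over $L$ at the end — goes through as you wrote it.
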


\begin{proof}
First we further condition on $L_j(T_K) = L$. Note that under this conditioning, $T_K \setminus L$ has the same distribution as the uniform spanning tree of
$G/\Vrt(L)$ given $K$, where $\Vrt(\cdot)$ denotes vertex set of a graph. By the negative association theorem of Feder and Mihail \cite{FM(92)}, \cite[Chapter 4]{LPbook}, conditioning on an edge being present makes the remaining set of edges stochastically smaller. As $\Vrt(L)$ contains both $K_j$ and $s$ we can repeatedly apply this result to deduce that on the edges not belonging to $K_j \cup L$ the set of edges $T_K \setminus L$
is dominated by the uniform spanning tree of $G / (K_j \cup \{ s \})$ given $K / K_j$.
We can now average over all possible paths $L = L_j(T_K)$ to remove this part of the conditioning and get the stated lemma. \end{proof}

We will use the following corollary of Lemma \ref{lem:domination} that can be deduced by taking
weak limits. Let $\mathfrak{F}_{K,j}$ denote the wired spanning forest conditioned on $K$ with
$K_j$ wired to infinity (defined as the weak limit of uniform spanning trees conditioned on $K$ with
$K_j$ wired to the sink).

\eqnsplst
{ \parbox{13cm}{The set of descendants of $K_j$ in the wired uniform spanning forest
  conditioned on $K$ is stochastically dominated by the connected component of $K_j$
  in $\mathfrak{F}_{K,j}$.} }

The rest of the proof follows an outline similar to the proof of \cite[Theorem 4.1]{LMS(08)}.
We define edge sets $E_1 \subset E_2 \subset \dots$ as follows. Let $E_0 = K_j$. Assuming
$E_n$ has been defined, let $S_n$ be the set of vertices of the connected component of
$\mathfrak{F}_{K,j} \cap E_n$ containing $K_j$. If all edges incident with $S_n$ are in $E_n$, we set $E_{n+1} = E_n$. If not, let $e$ be an edge incident with $S_n$ that minimizes
$\min \{ r : e \subset B_r \}$, where $B_r = \{ x \in \Z^d : \| x \|_\infty \le r \}$,
and set \eqnst
{ E_{n+1}
  := \begin{cases}
     E_n \cup \{ e \} & \text{if $e$ does not connect $S_n$
         with a component $K_i$, $i \not= j$;} \\
     E_n \cup \{ e \} \cup K_i & \text{if $e$ connects $S_n$ with $K_i$.}
     \end{cases} }
When in the above $E_n \subset B_{r-1}$, i.e.~a ``new shell is visited'' by the process,
we make the further requirement that $e$ be the edge along which the unit
current flow from $S_n$ to $\infty$ is maximal.

Let $M_n$ be the effective conductance from $S_n$ to $\infty$ in the complement of
$E_n$, with the edges of $K$ shorted:
\eqnsplst
{ M_n
  := \mathcal{C}( \text{$S_n \leftrightarrow \infty$ in $(\mathbb{Z}^d/K) \setminus E_n$} ). }
Then by \cite[Lemma 3.3]{LMS(08)}, \cite[Theorem 7]{Mor(03)}, $(M_n)_{n \ge 0}$ is a martingale
with respect to the filtration $\mathcal{F}_n$ generated by $E_n$, $\mathfrak{F}_{j,K} \cap E_n$.

The $M_0$ term is no longer constant, as in the original proof. Nevertheless, the argument of \cite[Theorem 4.1]{LMS(08)} gives: \begin{equation*}
  \P ( \desc(K_j) \not\subset D_n \,|\, T\vert_{D_k} = K)
  \leq C_d n^\frac{2-d}{2d} M_0(K_j)
\end{equation*}
We now bound $M_0(K_j)$ still with the conditioning that on $D_k$ we have the forest $K$. Therefore we can work on the graph produced by deleting any edges from $D_k$ that do not appear in $K$ and contracting each component of $K$ to a distinct vertex. % except for $K_j$ which has already been identified to infinity.
By definition, the effective conductance from $K_j$ to $\infty$ is the infimum of the energy of functions that are zero on $K_j$ and one except on finitely many vertices.
Therefore consider the function defined by $g(v)=0$ if $v \in K_j$ and one otherwise. This is clearly a valid function with regards to the infimum and will have energy equal to the number of edges connected to $K_j$. As all edges in $D_k$ that are not present in $K$ have been deleted and $K_j$ is a connected component of $K$, the only edges will be those connected to $K_j$ from the outside of $D_k$. The size of this set is at most $C_d |\partial D_k \cap K_j|$.

Summing over the connected components, and using the fact that the $K_j$'s are disjoint and cover all of $D_k$, we get \begin{equation*}
  \sum_j M_0(K_j)
  \le C_d |\partial D_k|
  \le C_d k^{d-1}.
\end{equation*}
Then as this bound is independent of $K$ we can average over all possible $K$ to get the unconditioned result:
\begin{align*}
 \P( \desc(D_k) \not\subset D_n )
 \leq C_d n^\frac{2-d}{2d}k^{d-1}.
\end{align*}
This completes the proof of Proposition \ref{prop:cond}.
\end{proof}

\begin{proof}[Proof of Theorem \ref{thrm:d>2}]
If $W_k = W_{k,\Lambda}$ and $T$ and $T_\Lambda$ agree on $E^*_{W,k}$, then $\psi_\cD$ and $\psi_{\cD,\Lambda}$ will produce the same sandpile configuration on $D_k$. Therefore to bound the difference of the
measures on any cylinder event $E$ defined on $D_k$ it suffices to bound the probability that the descendants in the spanning trees differ, or
the trees differ on that set of descendants.
\begin{equation}
\label{e:final-bound-d>2}
\begin{split}
  |\nu(E) - \nu_\Lambda(E)|
  &\le \P ( E_{\Z^d} \Delta E_\Lambda ) \\
  &\le \P \left( \text{$W_k \not= W_{k,\Lambda}$ or $T\vert_{\sfE^*_{W,k}} \not= T_\Lambda\vert_{\sfE^*_{W,k}}$} \right) \\
  &\le C_d \frac{k^{d-2} n^2}{(N-n)^{d-2}} + C_d k^{d-1} n^{\frac{2-d}{2d}}.
\end{split}
\end{equation} We now optimise the choice of $n$. We may assume $N \geq 2n$, in
which case $(N-n)^{d-2} \ge c_d N^{d-2}$.

When $d \ge 5$, we take $n = \frac{1}{2}N$, which gives the bound
$C_d k^{d-1} N^\frac{2-d}{2d}$.

When $d = 4$, the two terms in the right hand side of \eqref{e:final-bound-d>2}
are of the same order if $n = k^{4/9} N^{8/9}$. This gives the bound
$C k^{26/9} N^{-2/9}$.

When $d = 3$, we take $n = k^{6/13} N^{6/13}$. This yields the bound
$C k^{17/9} N^{-1/9}$. \end{proof}

\section{Rate of convergence in $\mathbb{Z}^2$.}
\label{sec:d=2}

In this section we bound the rate of convergence on $\Z^2$ in Theorem \ref{thrm:d=2}
below. As was the case for $d\geq 3$, the result will follow directly from the bijections and a bound on the probability that, in a suitable coupling, the descendants of $D_k$ in $\Z^2$ differ from those in $\Lambda$. This bound is given in Proposition \ref{tree d=2}. Due to recurrence, we cannot use Wilson's method rooted at infinity, so the
construction of the coupling is more involved. Write $G = (\Lambda \cup \{ s \},\sfE_\Lambda)$
for the graph on which the sandpile is defined. Recall that given a cylinder event $E$ determined by the sandpile heights in $D_k$, we write
$E_{\Z^2} = \{ \psi_\cD(T) \in E \}$ and $E_\Lambda = \{ \psi_{\cD,\Lambda}(T_\Lambda) \in E \}$, where $T$ is a sample from $\WSF$ and $T_\Lambda$ is a sample from $\UST_G$.

\begin{theorem}
\label{thrm:d=2}
Let $E$ be a cylinder event determined by the sandpile heights in $D_k$, and
let $\Lambda \subset \Z^2$ be a finite set. Let $N$ be the largest integer such that $D_N \subset \Lambda$. Given $\eps > 0$, there is a constant $C = C(\eps) > 0$ and a coupling $\P = \P_{\Lambda,k,\eps}$ of
$T$ and $T_\Lambda$, such that in this coupling we have
\eqnst
{ \left| \nu(E) - \nu_\Lambda(E) \right|
  \le \P ( E_{\Z^2} \Delta E_\Lambda )
  \le C \frac{k^{5/32}}{N^{1/16-\varepsilon}}. }
\end{theorem}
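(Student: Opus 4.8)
The plan is to reduce Theorem~\ref{thrm:d=2} to a single probabilistic estimate, exactly as the $d\ge 3$ case reduced to Proposition~\ref{prop:W_k-bnd} and Proposition~\ref{prop:cond}. As noted in the theorem's preamble, the bijections $\psi_\cD$ and $\psi_{\cD,\Lambda}$ produce the same sandpile heights on $D_k$ whenever $W_k=W_{k,\Lambda}$ and $T$ agrees with $T_\Lambda$ on $\sfE^*_{W,k}$. Therefore the first step is to write
\eqnst
{ |\nu(E)-\nu_\Lambda(E)|
  \le \P(E_{\Z^2}\Delta E_\Lambda)
  \le \P\big(\text{$W_k\not=W_{k,\Lambda}$ or $T\vert_{\sfE^*_{W,k}}\not=T_\Lambda\vert_{\sfE^*_{W,k}}$}\big), }
so that everything rests on controlling this last probability, which is the content of the forthcoming Proposition~\ref{tree d=2}. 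The whole difficulty is therefore pushed into constructing a good coupling $\P=\P_{\Lambda,k,\eps}$ of the $\WSF$ sample $T$ and the $\UST_G$ sample $T_\Lambda$, and bounding the probability that their restrictions to a neighbourhood of $\desc(D_k)$ disagree.

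Since $\Z^2$ is recurrent, Wilson's method rooted at infinity is unavailable, so I would build the coupling from Wilson's algorithm run on finite graphs together with a control on how far the loop-erased walks generating the two forests travel. The natural approach is to generate both $T$ and $T_\Lambda$ from the \emph{same} stacks of arrows via cycle popping, using procedure~\eqref{e:popping} (pop all cycles in $D_1$, then $D_2$, and so on). If all the branches of the tree emanating from $D_k$ — equivalently, the loop-erased walks started inside $D_k$ in Wilson's algorithm — reach their absorbing set without leaving some intermediate ball $D_n$ with $k<n<N$, then the popping inside $D_n$ is identical for the two graphs and one gets $W_k=W_{k,\Lambda}$ together with agreement of the trees on $\sfE^*_{W,k}$. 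Thus the error is controlled by the single geometric event $\{W_k\not\subset D_n\}$, i.e.\ that some descendant branch of $D_k$ escapes radius $n$. This is the two-dimensional analogue of combining Proposition~\ref{prop:W_k-bnd} with Proposition~\ref{prop:cond}, but here the diffusive nature of planar LERW forces the estimate to be only polynomially small rather than of order $(N-n)^{2-d}$.

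The key quantitative input will therefore be a bound of the form $\P(W_k\not\subset D_n)\le C(\eps)\,k^{a}n^{-b+\eps}$ for the event that a descendant of $D_k$ in the spanning forest reaches distance $n$. I would obtain this from known results on planar loop-erased random walk and uniform spanning trees: the probability that the LERW (equivalently a branch of the UST) started near $D_k$ travels to distance $n$ is governed by the growth exponent $5/4$ of planar LERW, so the typical branch from a vertex at distance $r$ reaches distance $n$ with probability roughly $(r/n)^{5/4}$ up to the $\eps$-loss. Summing over the $O(k^2)$ vertices of $D_k$, or more efficiently over $\partial D_k$ and using the expected size of descendant trees, produces an estimate whose exponents, after balancing $n$ against $N$ in the same way as in the proof of Theorem~\ref{thrm:d>2}, yield the stated $k^{5/32}N^{-1/16+\eps}$; the fractional exponents $5/32$ and $1/16$ are precisely what one expects from the planar LERW exponent. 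The dominant term comes from choosing $n$ to balance the ``escape'' probability $\P(W_k\not\subset D_n)$ against the ``coupling failure inside $D_n$'' probability, just as $n=k^{6/13}N^{6/13}$ was chosen when $d=3$.

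The main obstacle, and the reason this section is harder than the $d\ge 3$ case, is the absence of a direct cycle-popping realization rooted at infinity: transience was what made the super-loop bound of Lemma~\ref{lem:super-loop-bnd} and the clean factorization~\eqref{e:factorize} available. In two dimensions I must instead build the coupling through finite approximations and carefully argue that running Wilson's algorithm on $\Z^2/\!\!\sim$ (wiring outside a large box) versus on $G_\Lambda$ agrees on the relevant branches with high probability; the delicate point is that a single branch of the $\WSF$ that wanders out to distance comparable to $N$ can see the difference between the two boundary conditions, and one must show such excursions are rare with the correct power. Establishing the sharp $5/4$-type escape estimate uniformly in $\Lambda$, with the $\eps$-loss absorbed into $C(\eps)$, is the technical heart of the argument; everything else is the same bookkeeping as in Theorem~\ref{thrm:d>2}.
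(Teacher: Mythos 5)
There is a genuine gap at the heart of your proposal: the coupling mechanism you describe does not exist in two dimensions. You correctly note that Wilson's method rooted at infinity is unavailable because of recurrence, but you then propose to ``generate both $T$ and $T_\Lambda$ from the same stacks of arrows via cycle popping, using procedure \eqref{e:popping}.'' That procedure is only well defined on transient graphs: on $\Z^2$ the walk returns to every vertex infinitely often, infinitely many cycles through each vertex get popped, and the limiting forest $T$ is \emph{not} revealed by any stacks-of-arrows construction on the infinite lattice. Likewise, the quantitative engine of the $d\ge 3$ argument, Lemma \ref{lem:super-loop-bnd}, bounds the weight of extra popped loops by the Green function $G(z,w)$, which is infinite in $d=2$; so even the ``coupling failure inside $D_n$'' half of your balance has no analogue along these lines. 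Your reduction to the single event $\{W_k\not\subset D_n\}$ is also not justified: agreement of the popping inside $D_n$ is not implied by the branches staying in $D_n$, since cycles intersecting $\overline{W}$ can be fed by arrows outside $D_n$, which is exactly why the $d\ge 3$ proof needed Proposition \ref{prop:W_k-bnd} in addition to Proposition \ref{prop:cond}. Finally, the exponent heuristic (branch escape probability $(r/n)^{5/4}$ from the LERW growth exponent) is not a correct reading of that exponent and does not produce $5/32$ and $1/16$.

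The paper's actual route is entirely different and is forced by these obstructions: it passes to the planar dual, where the dual of $\UST_{G_\Lambda}$ is a free spanning tree and the dual of $T$ is sampled from $\FSF=\WSF$ on $(\Z^2)^*$. It first couples the dual \emph{backbones} inside $D^*_r$ using LERW estimates (Lemma \ref{results}: the $5/4$ growth exponent, exponential tails, and non-return bounds), then constructs a ``block'' of dual edges surrounding $D^*_k$ inside the annulus $D^*_n\setminus D^*_k$ by running successive LERWs from midpoints of shrinking arcs and controlling them with Beurling's estimate (Lemma \ref{lem:almost loop}, giving the $(k/n)^{1/4}$ term), and finally uses the monotone coupling between wired and free dual trees to conclude that the primal descendants of $D_k$ are trapped inside the block and coincide for $T$ and $T_\Lambda$ (Lemma \ref{lem:contained}). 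The exponents $5/32$ and $1/16-\eps$ come from optimizing the four parameters $k<n<r<R<N$ over the five resulting error terms. None of this machinery appears in your proposal, so the argument as written cannot be completed.
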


We will write $W_k$, respectively $W_{k,\Lambda}$,
for the set of descendants of $D_k$ in $T$, respectively $T_\Lambda$.
Then Theorem \ref{thrm:d=2} follows immediately from the following proposition.

\begin{proposition}
\label{tree d=2}
For any $\eps > 0$ there exists $C = C(\eps) > 0$ and a coupling $\P = \P_{\Lambda,k,\eps}$ of $T$ and $T_\Lambda$ such that in
this coupling \begin{align*}
  \P \left( \text{$W_k \neq W_{k,\Lambda}$ or
     $T$ and $T_\Lambda$ differ on some edge touching $W_k$} \right)
  \leq C \frac{k^{5/32}}{N^{1/16-\varepsilon}}.
\end{align*}
\end{proposition}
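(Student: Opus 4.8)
The plan is to couple the wired uniform spanning forest $T$ on $\Z^2$ with the uniform spanning tree $T_\Lambda$ on $G_\Lambda$ using a common collection of stacks of arrows, just as in the $d \ge 3$ case, but with the crucial difference that cycle popping from infinity no longer terminates because simple random walk on $\Z^2$ is recurrent. To circumvent recurrence, I would fix an intermediate scale $n$ with $k < n < N$ and run Wilson's algorithm to build both trees only inside a large ball $D_M$ with $M$ chosen far larger than $N$, wiring the boundary of $D_M$. On this finite graph the coupling is clean: popping all cycles contained in $\Lambda$ first reveals $T_\Lambda$, and continuing to pop cycles out to $D_M$ reveals (an approximation of) $T$. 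The event on the left-hand side fails to be controlled only if some popped cycle reaches outside $\Lambda$ while touching $\overline{W}_{k,\Lambda} = W_{k,\Lambda} \cup \partial W_{k,\Lambda}$, exactly as in the analysis leading to \eqref{e:decompose}.

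The main step is therefore to bound, for a fixed $w \in \partial W_{k,\Lambda}$, the total weight $\sum_{\gamma \in \Gamma_w} w(\gamma)$ of loops based at $w$ that exit $\Lambda$, which now equals the probability that a simple random walk started at $w$ leaves $D_N$ and returns to $w$. In $d=2$ this is no longer summable as a clean power of $N-n$; instead, the return probability after excursion to distance $N$ decays only logarithmically, so I expect the bound to take the form $C/\log(N/n)$ rather than $C(N-n)^{2-d}$. This weaker decay is exactly why the exponents in Theorem \ref{thrm:d=2} are so much smaller than in dimension $d \ge 3$, and why a genuinely different, more delicate argument is needed: one must play off the logarithmic loop bound against the probability $\P(W_k \not\subset D_n)$ that the descendant set of $D_k$ is large. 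For the latter I would adapt the two-dimensional descendant-size estimates; the one-ended property \eqref{e:one-end} guarantees $W_k$ is a.s.~finite, and known bounds on the diameter of $W_k$ (growing polynomially in $k$) combined with hitting estimates for LERW in $\Z^2$ give a tail bound on $\P(W_k \not\subset D_n)$ of the form $C k^{a} n^{-b}$ for suitable exponents.

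Having both pieces, the final estimate decomposes as in \eqref{e:final-bound-d>2}:
\eqnst
{ \P ( E_{\Z^2} \Delta E_\Lambda )
  \le \P(W_k \not\subset D_n)
    + \P\left( \text{$W_k \subset D_n$ but the trees differ on $\sfE^*_{W,k}$} \right), }
where the first term is the descendant tail bound and the second is controlled by $C\,\E[\,|\partial W_{k,\Lambda}|\,] / \log(N/n)$ via the loop-weight estimate and a two-dimensional analogue of Lemma \ref{lem:size-of-descendants}. Optimising over the free scale $n$ against the two competing terms — the polynomially decaying descendant tail and the logarithmically decaying loop contribution — then produces the stated bound $C\,k^{5/32} N^{-1/16+\eps}$, with the $\eps$ absorbing the logarithmic factors.

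I expect the hard part to be twofold. First, making sense of the coupling itself: since Wilson's method rooted at infinity is unavailable in $\Z^2$, one must carefully set up the finite-volume popping in $D_M$ and verify that as $M \to \infty$ the revealed forest converges to $\WSF$ while the coupling with $T_\Lambda$ is preserved — this is where the construction is ``more involved,'' and one must confirm that the factorization analogous to \eqref{e:factorize} survives. Second, and more delicate, is obtaining a sufficiently sharp two-dimensional tail bound on $\P(W_k \not\subset D_n)$; the recurrence of the walk makes the descendant tree genuinely larger and harder to confine than in higher dimensions, so the martingale/effective-conductance argument underlying Proposition \ref{prop:cond} must be replaced or substantially reworked using LERW-specific hitting and one-arm estimates in the plane.
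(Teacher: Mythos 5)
There is a genuine gap at the heart of your plan: the loop-weight estimate that drives the $d\ge 3$ argument simply does not survive in two dimensions. In the proof of Lemma \ref{lem:super-loop-bnd} the sum $\sum_{\gamma\in\Gamma_w}w(\gamma)$ is evaluated as $\sum_{m,z}\P^w(\tau_N=m,\,S(m)=z)\,G(z,w)$, and the whole bound rests on the Green's function decay $G(z,w)\le C_d(N-n)^{2-d}$. On $\Z^2$ the walk is recurrent, so $G(z,w)=\infty$: the expected number of loops at $w$ that exit $D_N$ is infinite, not $O(1/\log(N/n))$ as you assert (you appear to be conflating the \emph{probability} of one return with the \emph{expected number} of returns, which is what the union bound over popped cycles actually requires). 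Working in a large wired ball $D_M$ does not rescue this, since the killed Green's function there is of order $\log(M/N)$, which blows up as $M\to\infty$; so the cycle-popping union bound yields nothing in the limit. A second, independent problem is your tail bound on $\P(W_k\not\subset D_n)$: the martingale in Proposition \ref{prop:cond} is built from the effective conductance to infinity, which is identically zero on $\Z^2$, so that argument also has no two-dimensional analogue, and you give no concrete replacement.

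The paper's actual route is structurally different and is designed precisely to sidestep both failures. It passes to the planar dual, where the wired UST on $G_\Lambda$ corresponds to the free UST on $G^*$ and $\FSF=\WSF$ on $(\Z^2)^*$. It first couples the dual \emph{backbones} (the infinite LERW from $o^*$ in $T^*$ versus the path to $s^*$ in the wired dual tree) using the LERW growth exponent $5/4$ and exponential tail bounds of Barlow--Masson together with Lawler's coupling estimate (Lemma \ref{results}); then, on the event that the backbones agree in $D^*_r$, it uses shared stacks of arrows to build a \emph{block} of dual edges surrounding $D^*_k$ inside the annulus $D^*_n\setminus D^*_k$, with failure probability $C(k/n)^{1/4}+Cn/r$ controlled by Beurling's estimate (Lemma \ref{lem:almost loop}). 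The existence of a common block forces $W_k=W_{k,\Lambda}\subset D_r$ and agreement of the primal trees on all edges touching $W_k$ (Lemma \ref{lem:contained}), so confinement of the descendant set comes for free from planar duality rather than from a conductance martingale. If you want to salvage your outline, you would need to replace both the loop-weight step and the descendant-tail step with genuinely two-dimensional tools of this kind; as written, neither of your two quantitative inputs is available.
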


The coupling will be achieved by passing to the planar dual graphs.
The idea is to construct paths in the dual tree that together surround $D_k$ in
such a way that all descendants of $D_k$ are necessarily in the
interior of the region defined by the paths. Then it will be sufficient
to couple the dual trees in the interior of that region.

Let $G^* = (\Lambda^*,\sfE_\Lambda^*)$ denote the planar dual of $G$. The vertex set $\Lambda^*$ is naturally identified with a subset of the dual lattice $(\Z^2)^* = \Z^2 + (1/2,1/2)$. The planar graph $G^*$
has one unbounded face: the face corresponding to the sink $s$ via duality.
The dual spanning tree $T_\Lambda^*$ is defined on $G^*$, by including a dual edge $e^*$ in $T_\Lambda^*$ if and only if the corresponding edge $e$ is not in $T_\Lambda$. Then $T_\Lambda^*$ is a sample from $\UST_{G^*}$ (i.e.~with
free boundary conditions). It is well known that as $\Lambda \uparrow \Z^2$, the measure $\UST_{G^*}$ converges weakly to the free spanning forest measure $\FSF$, which for $\Z^2$ coincides with $\WSF$ \cite{Pem(91),LPbook}.
Let $T^*$ denote a sample from this measure on the graph $(\Z^2)^*$.
We refer to paths in $\Z^2$ as \emph{primal} paths, and paths in
$(\Z^2)^*$ as \emph{dual} paths. Let $o^*$ be the dual vertex $o + (1/2,1/2) \in (\Z^2)^*$,
where $o$ is the origin in $\Z^2$. For any $m \ge 0$ we define the balls 
in the dual graph:
\eqnst
{ D^*_m
  := \{ w \in (\Z^2)^* : |w - o^*| \le m \}. }

The construction of the coupling is broken down into a sequence of steps, and
the required estimates stated as lemmas. We collect the estimates at the
end and prove Proposition \ref{tree d=2}.
The integers $\ell \ge 1$ and $k < n < r < R < N$ will be parameters that we choose at the end to optimize the bound.

\medbreak

\emph{Step 1. Coupling the backbones inside $D^*_r$.}
We will need to work with fixed ``backbones'' in our trees. Since $T^*$ has one end $\WSF$-a.s., 
there is a unique infinite path $\gamma^*$ in 
$T^*$ that starts at $o^*$. We call $\gamma^*$ the \emph{backbone} of $T^*$.
The free spanning tree on $\Lambda^*$ does not have a unique backbone (there are typically several paths from $o^*$ to the boundary of $\Lambda^*$). Therefore, we will first work with the \emph{wired} boundary condition in the \emph{dual graph}, i.e.~we consider the graph $\widetilde{G}^* = (\Lambda^* \cup \{ s^* \}, \widetilde{\sfE}_\Lambda^*)$
obtained by connecting each vertex in $\Lambda^*$ to $s^*$ by as many edges
as it needs, for its degree to be $4$.
Then we will compare $\UST_{\widetilde{G}^*}$ to $\UST_{G^*}$
using the well known monotone coupling between them \cite{Pem(91),LPbook}.
Let $\widetilde{T}_\Lambda^*$ denote a sample from $\UST_{\widetilde{G}^*}$.
Let $\gamma^*_\Lambda$ denote the unique path between $o^*$ and $s^*$ in $\widetilde{T}_\Lambda^*$.
We call $\gamma^*_\Lambda$ the \emph{backbone} of $\widetilde{T}_\Lambda^*$.

We fix a coupling between $\gamma^*$ and $\gamma^*_\Lambda$ that maximizes the probability that their first $\ell$ steps are identical. The next lemma collects some LERW estimates from the literature that we use to
estimate the probability that the restrictions of $\gamma^*_\Lambda$ and $\gamma^*$ to the ball $D^*_r$ differ from each other.

\begin{lemma}
\label{results}
(i) For $l < \sqrt{N}$, we have
\eqnst
{ \P(\text{first $\ell$ steps of $\gamma^*$ and $\gamma^*_\Lambda$ are not identical})
  \leq C \frac{l^2}{N} \ln\left(\frac{N}{l}\right). }
(ii) If $R > 4r$, we have
\eqnst
{ \P(\text{$\gamma^*_\Lambda$ returns to $D^*_r$ after its first exit from $D^*_R$})
  \le C \frac{r}{R} }
and
\eqnst
{ \P(\text{$\gamma^*$ returns to $D^*_r$ after its first exit from $D^*_R$})
  \le C \frac{r}{R}. }
(iii) We have
\eqnst
{ \E[\text{number of steps of $\gamma^*$ until first exit from $D^*_R$}]
  = R^{\frac{5}{4}+o(1)} \quad \text{as $R \to \infty$.} }
(iv) For all $\lambda, \eps > 0$, $N > 4R$ we have that there exist $C(\eps), C_1, C_2 > 0$
such that \eqnst{
  \P(\text{number of steps of $\gamma^*_\Lambda$ until first exit from $D^*_R >\lambda C(\eps) R^{\frac{5}{4}+\eps}$})
  \leq C_1 e^{-C_2\lambda}. }
\end{lemma}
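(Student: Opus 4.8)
The plan is to derive the exponential tail directly for $\gamma^*_\Lambda$ by a restart argument based on the domain Markov property of loop-erased walk, using part (iii) only to fix the relevant length scale. Write $M_R$ for the number of steps of $\gamma^*_\Lambda$ until its first exit from $D^*_R$. By Wilson's algorithm applied to the wired dual graph $\widetilde{G}^*$, the backbone $\gamma^*_\Lambda$ is the loop-erasure $\LE(S^*)$ of the simple random walk $S^*$ started at $o^*$ and killed upon reaching $s^*$. Since $N > 4R$, the killing takes place well outside $D^*_R$, so up to its first exit from $D^*_R$ the walk $S^*$ behaves like bulk simple random walk; in particular the first-moment estimate of part (iii) applies to $M_R$, giving $\E[M_R] \le R^{5/4+o(1)}$.

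The key structural input is the domain Markov property: conditionally on the first $t$ steps $\gamma^*_\Lambda[0,t]$, the remainder of $\gamma^*_\Lambda$ is distributed as a loop-erased walk from the current endpoint to $s^*$ in the graph with the already-traced vertices removed. I would combine this with the following \emph{uniform} first-moment bound, which upgrades (iii): there is a constant $C(\eps)$ such that for every interior vertex $z \in D^*_R$ and every self-avoiding ``slit'' $\beta \subset D^*_R$,
\begin{equation*}
  \E\big[\,\#\{\text{loop-erased steps to exit } D^*_R \text{, started at } z \text{ in the slit domain}\}\,\big]
  \le C(\eps)\, R^{5/4+\eps}.
\end{equation*}
Granting this, set $\ell_0 := 2C(\eps)R^{5/4+\eps}$. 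Let $\mathcal{F}_t$ be generated by $\gamma^*_\Lambda[0,t]$ and let $A$ be the additional number of steps to exit $D^*_R$. By the domain Markov property we have $\E[A \mid \mathcal{F}_{m\ell_0}] \le C(\eps)R^{5/4+\eps} = \ell_0/2$ on $\{M_R > m\ell_0\}$, so Markov's inequality gives $\P(M_R > (m+1)\ell_0 \mid M_R > m\ell_0) \le \E[A\mid\mathcal{F}_{m\ell_0}]/\ell_0 \le 1/2$. Iterating yields $\P(M_R > m\ell_0) \le 2^{-m}$, and writing $\lambda = m$ and renaming constants produces $\P(M_R > \lambda\, C(\eps)R^{5/4+\eps}) \le C_1 e^{-C_2\lambda}$ with $C_2 = \ln 2$, which is exactly (iv).

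I expect the main obstacle to be the uniform first-moment bound displayed above: part (iii) controls only the expected length of the full backbone from $o^*$, whereas the restart argument requires the same $R^{5/4+o(1)}$ control for a loop-erased walk started at an arbitrary interior point and run in an arbitrary sub-domain of $D^*_R$ obtained by deleting a previously traced slit. Establishing this uniformity — in particular ruling out that deleting a slit can lengthen the loop-erasure beyond scale $R^{5/4+\eps}$ — is the technical heart, and I would handle it through the one-point and length estimates for two-dimensional loop-erased walk of Lawler--Masson and Barlow--Masson, together with the observation that $N > 4R$ keeps the killing boundary away from $D^*_R$. As an alternative, one could cite the exponential tail bound of Barlow--Masson directly and use (iii) merely to identify the scale, but the domain Markov restart above has the advantage of being essentially self-contained given the uniform mean estimate.
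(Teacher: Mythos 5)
Your proposal addresses only part (iv) of the four-part lemma, so as it stands it is incomplete. Parts (i), (ii) and (iii) are substantive statements in their own right: (i) is a quantitative coupling estimate between the infinite two-dimensional LERW and the LERW in the wired finite domain, which the paper obtains from the proof of Proposition 7.4.2 in Lawler's book; (ii) is the non-reentry estimate of Barlow--Masson (Lemma 2.4 of their 2011 paper); and (iii) is the growth exponent $5/4$ for planar LERW, due to Kenyon and recoverable from Barlow--Masson together with Masson's work. None of these is addressed in your write-up, and none of them follows from the restart argument you describe; you would need to either prove or cite each one separately, as the paper does.

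For part (iv) itself, your restart scheme is in spirit the right mechanism (it is essentially how the exponential tail is proved in Barlow--Masson), but the step you flag as ``the technical heart'' is genuinely the whole content, and it is not correct to describe the conditional law after $t$ steps as a loop-erased walk ``in the graph with the already-traced vertices removed.'' The domain Markov property for LERW says that, given $\gamma^*_\Lambda[0,t]$, the continuation is the loop-erasure of a random walk from the current endpoint \emph{conditioned} to avoid the traced initial segment until it reaches $s^*$ --- an $h$-transformed walk, not a walk in a slit domain. The uniform first-moment bound $\E[A\mid\mathcal{F}_{m\ell_0}]\le C(\eps)R^{5/4+\eps}$ must therefore be proved for this conditioned walk uniformly over all possible initial segments, which is precisely the content of the Barlow--Masson estimates (their Corollary 3.4 and Theorem 5.8(4)) that the paper cites directly. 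Since your argument presupposes exactly the result it is meant to establish, the honest options are either to cite those results, as the paper does, or to reproduce their proof, which is considerably more involved than the Markov-inequality iteration you outline.
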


\begin{remark}
Note that in contrast with \cite[Proposition 11.3.1]{LLbook}, the above bounds give us
power law (rather than logarithmic) control on the errors, since we are free to
discard a collection of ``bad paths'' in $D^*_r$ of small probability on which convergence to the infinite LERW would be much slower.
\end{remark}

\begin{proof}[Proof of Lemma \ref{results}.]
(i) The statement follows from \cite[Proposition 7.4.2]{Lawler}. Note that although the exact statement is not present in the reference, it immediately follows from the proof presented there.

(ii) This is \cite[Lemma 2.4]{Barlow(11)}.

(iii) This result was first shown by Kenyon \cite{Kenyon(00)} (stated there
in the upper half plane). It also follows by combining \cite[Proposition 6.2(2)]{Barlow(10)}
and \cite[Theorem 5.7]{Masson(09)}.

(iv) This follows from \cite[Corollary 3.4]{Barlow(10)}, \cite[Theorem 5.8(4)]{Barlow(10)} and part (iii).
\end{proof}

The next lemma puts the above estimates together and bounds the probability that the restrictions of $\gamma^*_\Lambda$ and $\gamma^*$ to the ball $D^*_r$ are not
identical.

\begin{lemma}
\label{backbone constant}
% If $N > 8R > 16r > 64n > 128k$ If $N > 8R > 16r$ and $\lambda, \eps > 0$ and $N > \lambda^2 R^{\frac{5}{2}+\eps}$ then
\begin{align*}
  &\P( \gamma^*_\Lambda \cap D^*_r \not= \gamma^* \cap D^*_r )
  \leq C \frac{\lambda^2 C(\eps)^2 R^{\frac{5}{2}+2\eps}}{N} \ln\left(\frac{N}{\lambda C(\eps) R^{5/4+\eps}}\right)
          + C_1 \exp (-C_2 \lambda) + 2 C \frac{r}{R}
\end{align*}
\end{lemma}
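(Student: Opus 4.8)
The plan is to bound the event $\{\gamma^*_\Lambda \cap D^*_r \neq \gamma^* \cap D^*_r\}$ by decomposing it into a small number of ``bad'' sub-events, each controlled by one part of Lemma \ref{results}, and then combining via a union bound. The guiding picture is this: if the two backbones agree on their first $\ell$ steps, and if neither backbone takes too many steps to exit $D^*_R$, and if neither backbone returns to $D^*_r$ after first exiting $D^*_R$, then the portions of $\gamma^*$ and $\gamma^*_\Lambda$ inside $D^*_r$ are forced to coincide. The free parameter $\lambda$ enters because we convert the expectation bound in Lemma \ref{results}(iii) and the tail bound in (iv) into a high-probability statement that the exit time from $D^*_R$ is at most of order $\lambda C(\eps) R^{5/4+\eps}$.

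Concretely, first I would use the coupling that maximizes agreement of the first $\ell$ steps. I would choose $\ell$ to be (a high-probability upper bound on) the number of steps each backbone takes before first leaving $D^*_R$; that is, I would set $\ell \approx \lambda C(\eps) R^{5/4+\eps}$, so that with probability at least $1 - C_1 e^{-C_2\lambda}$ (by Lemma \ref{results}(iv), together with (iii) for $\gamma^*$) both backbones have exited $D^*_R$ within their first $\ell$ steps. On the complementary good event, the initial $\ell$-step segments are long enough to contain the entire excursion inside $D^*_R$. Applying Lemma \ref{results}(i) with this value of $\ell$ then produces the first term in the stated bound, namely
\eqnst
{ C \frac{\ell^2}{N} \ln\!\left(\frac{N}{\ell}\right)
  = C \frac{\lambda^2 C(\eps)^2 R^{\frac{5}{2}+2\eps}}{N}
    \ln\!\left(\frac{N}{\lambda C(\eps) R^{5/4+\eps}}\right). }

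Next, even when the first $\ell$ steps of the two backbones agree and contain the first exit from $D^*_R$, the paths could in principle re-enter $D^*_r$ at a later time, after which they are no longer coupled and may diverge inside $D^*_r$. To rule this out I would invoke Lemma \ref{results}(ii), which bounds by $Cr/R$ the probability that either backbone returns to $D^*_r$ after its first exit from $D^*_R$; the two return events contribute the final $2C\,r/R$ term. Putting these pieces together: on the intersection of the three good events (first $\ell$ steps identical, both exit times from $D^*_R$ at most $\ell$, and neither path returns to $D^*_r$ after exiting $D^*_R$), the restrictions $\gamma^* \cap D^*_r$ and $\gamma^*_\Lambda \cap D^*_r$ must be equal, since both equal the common initial segment truncated at the last visit to $D^*_r$. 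A union bound over the three complementary events yields exactly the claimed inequality. The main obstacle is the bookkeeping needed to verify the deterministic implication that these three events genuinely force equality inside $D^*_r$ — one must argue carefully that no discrepancy can be created either before the exit from $D^*_R$ (handled by the coupling of the first $\ell$ steps) or after it (handled by the non-return estimate), and that the choice $\ell \approx \lambda C(\eps) R^{5/4+\eps}$ is consistent with the hypothesis $\ell < \sqrt{N}$ required in Lemma \ref{results}(i), which constrains the admissible range of $R$ and $\lambda$ relative to $N$.
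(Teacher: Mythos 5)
Your proposal is essentially the paper's proof: the same three events (first $\ell$ steps identical under the coupling, the backbone's exit time from $D^*_R$ at most $\ell = \lambda C(\eps)R^{5/4+\eps}$, and no return to $D^*_r$ after the first exit from $D^*_R$), the same union bound, and the same applications of Lemma \ref{results}(i), (iv), (ii). One small correction: you do not need --- and could not obtain from Lemma \ref{results}(iii), which is only an expectation bound and via Markov would give a polynomial rather than exponential error in $\lambda$ --- a separate tail bound on the exit time of $\gamma^*$; on the event that the first $\ell$ steps agree and $\gamma^*_\Lambda$ exits $D^*_R$ within $\ell$ steps, $\gamma^*$ automatically exits within $\ell$ steps as well, which is how the paper argues (its event $F_2$ concerns $\gamma^*_\Lambda$ only).
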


\begin{proof}
Let $F_1$ be the event that the first $\ell$ steps of $\gamma^*$ and $\gamma^*_\Lambda$ coincide, the event maximized by our choice of coupling. We therefore need to choose $\ell$ appropriately to get the desired result.

Let $F_2$ be the event that the length of $\gamma_\Lambda^*$ until first exit of $D^*_R$ is less than $\ell$.

Let $F_3$ be the event that neither $\gamma_\Lambda^*$ nor $\gamma^*$ return to $D^*_r$ after their first exits from $D^*_R$.

On the event $F_2 \cap F_3$, we have that the first $\ell$ steps of $\gamma_\Lambda^*$ includes
$\gamma^*_\Lambda \cap D^*_r$. If $F_1$ also occurs, then we have $\gamma^*_\Lambda \cap D^*_r = \gamma^* \cap D^*_r$.
We choose $\ell = \lambda C(\eps) R^{5/4+\eps}$. By Lemma \ref{results}(i),(iv),(ii) we have
\eqnsplst
{ \P (\gamma^*_\Lambda \cap D^*_r \not= \gamma^* \cap D^*_r)
  &\le \P(F_1^c) + \P(F_2^c) + \P(F_3^c) \\
  &\le C \frac{\lambda^2 C(\eps)^2 R^{\frac{5}{2}+2\eps}}{N} \ln\left(\frac{N}{\lambda C(\eps) R^{5/4+\eps}}\right)
       + C_1 \exp (-C_2 \lambda)
       + 2 C \frac{r}{R}. }
\end{proof}

\emph{Step 2. Constructing the dual paths that surround $D^*_k$.}
On the event $\gamma^*_\Lambda \cap D^*_r \not= \gamma^* \cap D^*_r$, we extend the coupling of
$\gamma^*_\Lambda$ and $\gamma^*$ to a coupling of $\WSF$ and $\UST_{\widetilde{G}^*_\Lambda}$
in an arbitrary way. (For example: make them conditionally independent given the backbones.)
On the event $\gamma^*_\Lambda \cap D^*_r = \gamma^* \cap D^*_r$, we extend the coupling via Wilson's stacks of arrows construction. For each $x \in D^*_r \setminus \gamma^*$, we assign
identical stacks for the constructions in $\Lambda^*$ and $(\Z^2)^*$, respectively. For all other vertices, the stacks in $\Lambda^*$ are assigned independently from those in $(\Z^2)^*$.
This defines a coupling of $\WSF$ and $\UST_{\widetilde{G}^*}$ on $(\Z^2)^*$.

We now construct the required set of dual paths. Write $\gamma^*_r$ for the portion of
$\gamma^*$ up to its first exit from $D^*_r$. By a \emph{block}, we mean a set $U$ of dual edges with the properties:\\
(i) $U \subset D^*_n \setminus D^*_k$;\\
(ii) $U \cup \gamma^*_r$ is a connected set of edges;\\
(iii) the set of vertices of $U \cup \gamma^*_r$ disconnects $D^*_k$ from $(D^*_n)^c$.

\begin{lemma} 
\label{lem:almost loop}
Suppose that $r > 4n > 16k$ and $\gamma^*_\Lambda \cap D^*_r = \gamma^* \cap D^*_r$ . There exists $C > 0$ such that \eqnst
{ \P \left( \parbox{8.5cm}{we can pop a set of coloured cycles contained in
     $D^*_n \setminus D^*_k$ so that the arrows revealed contain a block} \right)
  \ge 1 - C \left(\frac{k}{n}\right)^{1/4} - C \frac{n}{r}. }
\end{lemma}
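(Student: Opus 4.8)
The plan is to construct the block out of loop-erased random walk paths that close up around $D^*_k$, using the coupled stacks of arrows inside $D^*_r$, and to control failure by the probability that such paths escape the annulus $D^*_n \setminus D^*_k$. Since on the event $\gamma^*_\Lambda \cap D^*_r = \gamma^* \cap D^*_r$ the stacks for $x \in D^*_r \setminus \gamma^*$ are identical in both constructions, it suffices to produce the block from these shared stacks; then any block we reveal is automatically common to $T^*$ and $\widetilde T^*_\Lambda$. The backbone $\gamma^*_r$ already reaches from $D^*_k$ out past $D^*_n$ (it exits only at $D^*_r$ with $r > 4n$), so $\gamma^*_r$ provides one ``rib'' of the encircling set; I need to grow additional LERW-generated branches that hook onto $\gamma^*_r$ and together wind once around $D^*_k$ while staying inside $D^*_n$.

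First I would fix a finite collection of starting vertices on $\partial D^*_k$ (or just outside it), spaced around the annulus, and run Wilson's algorithm from each of them using the shared stacks: follow arrows on top of the stacks, pop cycles as they close, and stop each walk when it first meets $\gamma^*_r$ or a previously-constructed branch. By Wilson's theorem these loop-erased traces are exactly the tree branches connecting those vertices to the backbone, and their union with $\gamma^*_r$ is a connected set of edges attached to the backbone. The geometric point is that if every such branch stays inside $D^*_n$ and successfully attaches to $\gamma^*_r$ before exiting, then the resulting connected set, together with $\gamma^*_r$, separates $D^*_k$ from $(D^*_n)^c$: the walks fan out around $D^*_k$ and, being joined to the single path $\gamma^*_r$ that pierces the annulus, form a topological barrier enclosing $D^*_k$. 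This gives a block as required by (i)--(iii), and all its edges lie among the shared arrows revealed by popping cycles contained in $D^*_n \setminus D^*_k$.

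The failure probability splits into two pieces matching the two error terms. The term $C(k/n)^{1/4}$ comes from the event that one of the LERW branches started near $D^*_k$ exits $D^*_n$ before attaching — i.e.\ a loop-erased walk crosses the annulus of ratio $n/k$; the probability that LERW (equivalently, the harmonic/beta estimate for the uniform spanning tree path) crosses an annulus of aspect ratio $n/k$ without returning is bounded by $C(k/n)^{1/4}$, using the standard two-dimensional LERW one-arm / crossing exponent (this is where the exponent $1/4$ enters, consistent with the $5/4$ growth exponent in Lemma~\ref{results}). The term $C\, n/r$ accounts for the backbone $\gamma^*$ (hence $\gamma^*_r$) itself misbehaving inside $D^*_n$ — namely the chance that $\gamma^*$ re-enters $D^*_n$ after leaving it, which by the Beurling-type estimate of Lemma~\ref{results}(ii) applied with radii $n$ and $r$ is at most $C\,n/r$; such re-entry could spoil the clean separation property of $\gamma^*_r$ in the annulus.

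The main obstacle will be the geometry/topology of showing that the union of a \emph{finite} set of branches plus $\gamma^*_r$ genuinely disconnects $D^*_k$ from $(D^*_n)^c$, rather than leaving a gap through which descendants could escape; I would handle this by choosing the starting vertices densely enough around $\partial D^*_k$ that consecutive attached branches, together with the arcs of $\gamma^*_r$ and the branch endpoints on it, bound a chain of regions covering a full loop around $D^*_k$, and then invoke planar duality (a dual connected set surrounding $D^*_k$ blocks all primal descendant paths). The quantitative cost of the encircling — how many branches are needed and the union bound over their individual crossing failures — is absorbed into the constant $C$ and does not change the stated exponents, so the remaining work is the routine assembly of the two LERW estimates above.
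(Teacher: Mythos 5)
Your high-level strategy --- grow LERW branches from the shared stacks, attach them to $\gamma^*_r$, and argue that their union with $\gamma^*_r$ forms a block whose failure probability is controlled by annulus-escape estimates --- matches the paper's, and your accounting of the $C\,n/r$ term (the backbone re-entering $D^*_n$, via Lemma \ref{results}(ii)) is essentially correct. But there are two genuine gaps in the quantitative part. First, you start your walks on or just outside $\partial D^*_k$ and attribute the per-branch failure probability $C(k/n)^{1/4}$ to an ``LERW one-arm/crossing exponent'' related to the growth exponent $5/4$. That is not where the exponent comes from, and a walk started adjacent to $D^*_k$ steps into $D^*_k$ with probability bounded below, which already violates the requirement that the popped cycles and the block be contained in $D^*_n \setminus D^*_k$. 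In the paper the walks are started on the circle of radius $\sqrt{kn}$, and the exponent $1/4$ arises from Beurling's estimate applied across each half of the annulus: escaping from radius $\sqrt{kn}$ out to radius $n$ (or in to radius $k$) without hitting the crossing path $\gamma^*_{k,n}$ costs $(\sqrt{kn}/n)^{1/2} = (k/n)^{1/4}$, respectively $(k/\sqrt{kn})^{1/2} = (k/n)^{1/4}$. The LERW growth exponent plays no role in this lemma.

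Second, and more seriously, you dismiss the number of branches as a constant ``absorbed into $C$''. It is not: to close a loop around $D^*_k$ with walks spaced around the annulus you need a number of branches growing with $k$ and $n$, and a naive union bound of (number of branches) times (per-branch escape probability) does not give $O((k/n)^{1/4})$. This is precisely the difficulty the paper's construction is built to overcome: it bisects an arc $\alpha_i$ of the circle $H_{\sqrt{kn}}$ at each stage and starts the $i$-th walk at distance $O(2^{-i}\sqrt{kn})$ from the already-built tree, so that Beurling contributes an extra factor $2^{-i/2}$ to the $i$-th failure probability and the resulting geometric series sums to $O((k/n)^{1/4})$. Without such a scheme (or some substitute making the per-branch errors summable), your argument does not yield the stated bound. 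Relatedly, the topological separation claim for a \emph{finite} collection of wiggly branches needs justification; the paper obtains it by tracking, via the shrinking arcs $\alpha_i$, the remaining gap through which a crossing from $H_k$ to $H_n$ could pass, stopping only once its diameter is $O(1)$ and then closing it with a bounded number of extra walks.
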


\begin{proof}
Due to Lemma \ref{results}(ii), we have $\P ( \gamma^* \cap D^*_n \not= \gamma^*_r \cap D^*_n ) \le C (n/r)$. Henceforth assume that we are on the event when $\gamma^* \cap D^*_n = \gamma^*_r \cap D^*_n$.

We start with a minor adaptation of the argument of \cite[Lemma 6.1]{ABNW(99)}.
Let $v \in (\Z^2)^*$ be a vertex at distance $\sqrt{k n}$ from $o^*$, and let 
$\{ S(n) \}_{n \ge 0}$ be simple random walk starting at $v$. Let $\tau$ be the 
first time when either $S$ exits $D^*_n \setminus D^*_k$, or when the loop-erasure 
of $S$ has made a non-contractible loop around $D^*_k$. Let us use the sequence 
$S(1), S(2), \dots, S(\tau)$ as our successive choices in Wilson's algorithm, 
where $\gamma^*_r$ is already part of the tree to be constructed. That is, 
whenever a random walk step is to be made, we use the next step of $S$ for the 
random walk step, and whenever a new vertex is to be chosen in the algorithm, 
we use the next vertex visited by $S$ as the new vertex.

We claim that on the event $S[0,\tau] \subset D^*_n \setminus D^*_k$ the
set of edges $U$ that we have added to the tree is a block. Indeed, condition (i)
holds because the walk never left $D^*_n \setminus D^*_k$. Also, observe that the 
set of vertices of $\LE(S[0,\tau))$ do not get erased, and hence condition (iii)
holds. Finally, condition (ii) holds, because each piece of the tree we create 
gets joined to $\gamma^*_r$ (here is where we use that 
$\gamma^* \cap D^*_n = \gamma^*_r \cap D^*_n$). Note that since $S(\tau-1)$ does not
get erased, the last piece is also joined. This proves the claim. Interpreting 
the construction in terms of stacks of arrows, we see that the probability of 
the event in part (i) is at least the probability that
$S[0,\tau] \subset D^*_n \setminus D^*_k$.

The probability that a non-contractible loop is created could be bounded by
$\ge 1 - C (k/n)^\zeta$ with some $\zeta, C > 0$, by ideas similar 
to \cite[Exercise 3.3]{LLbook}, showing the statement (i) with $\zeta$ in place of $1/4$.
In order to get the explicit exponent $1/4$, we combine the argument with an idea 
that was inspired by \cite{Benj(00)}.

Again we are going to start with $\gamma^*_r$ as our initial tree.
Choose a subpath $\gamma^*_{k,n}$ of $\gamma^*_r$ that forms a crossing from
$D^*_k$ to $(D^*_n)^c$. Write $H_\rho$ for the circle of radius $\rho$ centred at $o^*$. Define the annulus:
\eqnst
{ A_{k,n}
  = \{ z \in \R^2 : k+1 < |z - o^*| < n-1 \}. }
Choose a point $Q \in \gamma^*_{k,n} \cap H_{\sqrt{k n}}$, and let $\alpha_0 = H_{\sqrt{k n}} \setminus \{ Q \}$. Let $P_1$ be the mid-point of $\alpha_0$, and let $v_1$ be a vertex of $(\Z^2)^*$ closest to $P_1$. Run a random walk $S^{(1)}$ from $v_1$ to $\gamma^*_{k,n}$, and add edges to the tree in the same way as we did with $S$. Let $\pi_1$ be the set of edges added.
Note that $\pi_1$ is not necessarily a connected set of edges, however,
$\gamma^*_r \cup \pi_1$ is.
From the two subarcs of $\alpha_0$ defined by $P_1$, throw away the one that is on the same side of $\gamma^*_{k,n}$ as where $\pi_1$ hit, and let us call the other arc $\alpha_1$. On the event when $\{ S^{(1)} \} \subset A_{k,n}$, the arc $\alpha_1$ has the property that any dual lattice path from $H_k$ to $H_n$ that is vertex-disjoint from $\gamma^*_r \cup \pi_1$ has to intersect $\alpha_1$.

Continue inductively in the following way. Suppose that for some $i \ge 1$ the
arc $\alpha_i$ and the sets of edges $\pi_1, \dots, \pi_i$ have been defined. 
Let $P_{i+1}$ be the mid-point of $\alpha_i$ and let $v_{i+1}$ be the vertex of $(\Z^2)^*$ closest to $P_{i+1}$. Run a random walk $S^{(i+1)}$ from $v_{i+1}$
to $\gamma^*_{k,n} \cup \pi_1 \cup \dots \cup \pi_i$, and let $\pi_{i+1}$ be the set of edges that get added to the tree. From the two subarcs
of $\alpha_i$, throw away the one that is on the same side of $\gamma^*_r$ as where $\pi_{i+1}$ hit, and call
the other one $\alpha_{i+1}$. On the event when $\{ S^{(i+1)} \} \subset A_{k,n}$,
the arc $\alpha_{i+1}$ has the property that any dual lattice path from $H_k$ to $H_n$ that is vertex-disjoint from $\gamma^*_r \cup \pi_1 \cup \dots \cup \pi_{i+1}$
has to intersect $\alpha_{i+1}$.

The construction is well defined until a time when the length of the arc $\alpha_i$ becomes of order $1$. Stop the construction the first time when $\diam(\alpha_i) < 10$, say. We can select further vertices $v_{i+1}, \dots, v_{i+K}$ (with $K$ a fixed constant, say, $K = \lceil 10 \sqrt{2} + 4 \rceil$) such that if we start further random walks at these vertices, then $\gamma^* \cup \pi_1 \cup \dots \cup \pi_{i+K}$ contains a block.
An example of the start of this construction is shown in Figure \ref{fig:block}.

\psfrag{gamma^*_r}{$\gamma^*_r$}
\psfrag{gamma^*_k,n}{$\gamma^*_{k,n}$}
\psfrag{r}{$r$}
\psfrag{n}{$n$}
\psfrag{k}{$k$}
\psfrag{v_1}{$v_1$}
\psfrag{v_2}{$v_2$}
\psfrag{v_3}{$v_3$}
\psfrag{v_4}{$v_4$}
\psfrag{v_5}{$v_5$}

\begin{figure}%[h]
\centering
\includegraphics[scale=.6]{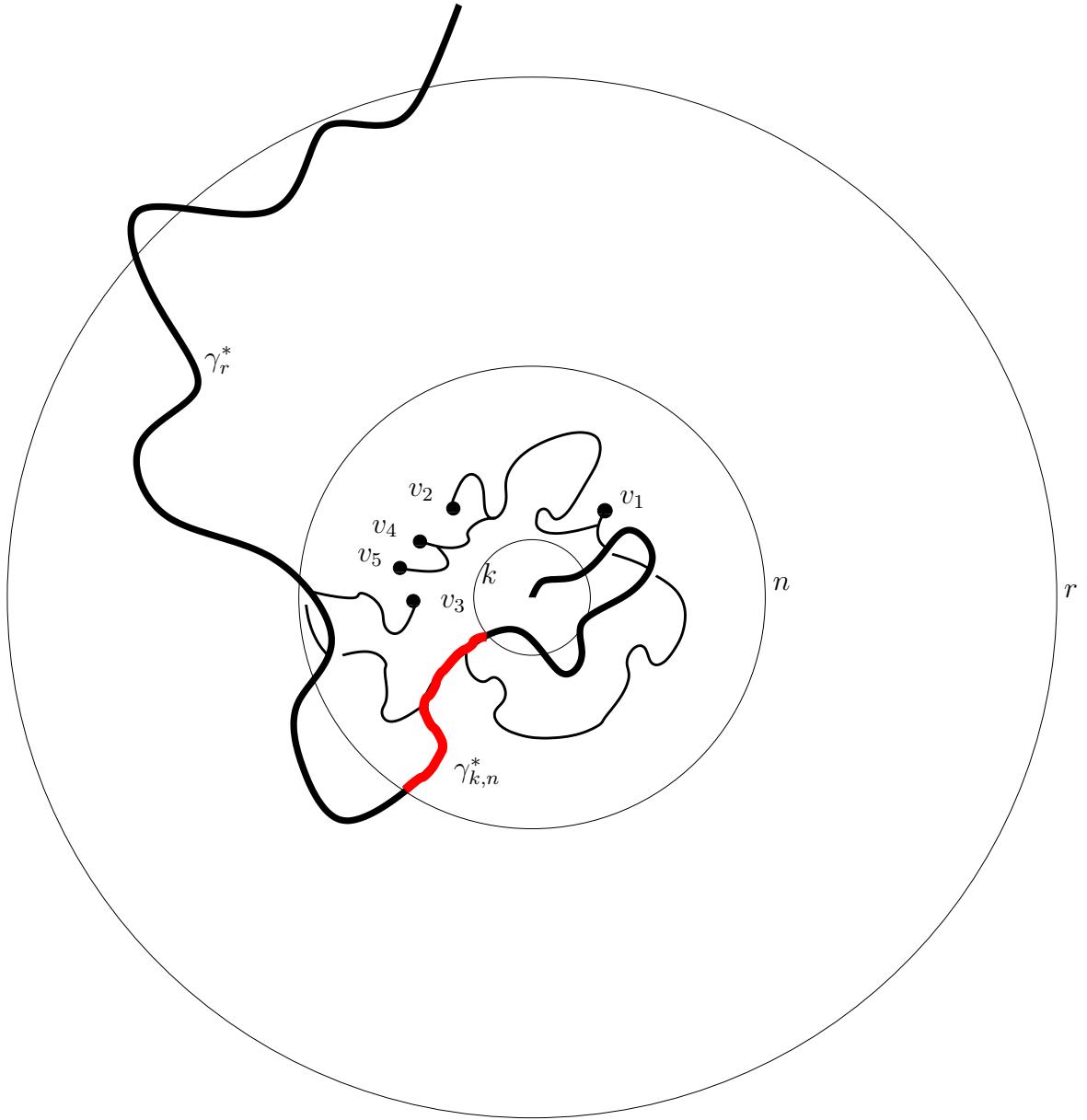}
\caption{An example of the construction of a block. The thick line is $\gamma^*_r$, and the red piece is $\gamma^*_{k,n}$. LERWs were started successively at $v_1$, $v_2$, etc. Note the gaps between pieces in some of the LERWs, where an intersection with $\gamma^*_r \setminus \gamma^*_{k,n}$ has occurred.}
\label{fig:block}
\end{figure}

It remains to bound the probability that the walks $S^{(1)}, S^{(2)}, \dots$ all remain inside 
$D^*_n \setminus D^*_k$. The $i$-th walk $S^{(i)}$ starts at distance $O(2^{-i} \sqrt{k n})$ 
from the current tree $\mathfrak{T}_{i-1} := \gamma^*_{k,n} \cup \pi_1 \cup \dots \cup \pi_{i-1}$.
If it were to leave $D^*_n \setminus D^*_k$ without hitting $\mathfrak{T}_{i-1}$, it would 
first have to leave the ball 
\eqnst
{ B^*(v_i; (1/4) \sqrt{kn})
  := \{ w \in (\Z^2)^* : |w - v_i| \le (1/4) \sqrt{kn} \}. } 
without hitting $\mathfrak{T}_{i-1}$. 
Using Beurling's estimate \cite[Section 6.8]{LLbook}, the probability of this
is at most $C (2^{-i} \sqrt{kn} / \sqrt{kn} )^{1/2}$.
Regardless of where the walk exits $B^*(v_i; (1/4) \sqrt{kn})$, the exit point
$z^*_i$ is still at distance $\asymp \sqrt{k n}$ from $o^*$.
It follows, again using Beurling's estimate, that the probability that the 
continuation of the walk from $z^*_i$ exits $D^*_n$ without hitting 
$\mathfrak{T}_{i-1}$ is at most $C (\sqrt{kn} / n )^{1/2}$. Similarly,
together with a time-reversal argument, the probability that the 
walk started at $z^*_i$ hits $D^*_k$ before hitting $\mathfrak{T}_{i-1}$ is
at most $C (k / \sqrt{kn} )^{1/2}$. Combining these three estimates we get the bound
\eqnsplst
{ \P ( \{ S^{(i)} \} \not\subset D^*_n \setminus D^*_k \,|\, \mathfrak{T}_{i-1} )
  &\leq C \left(\frac{2^{-i} \sqrt{k n}}{\sqrt{k n}}\right)^{\frac{1}{2}} \times
       \left[ \left(\frac{\sqrt{k n}}{n}\right)^{\frac{1}{2}}
       + \left(\frac{k}{\sqrt{k n}}\right)^{\frac{1}{2}} \right] \\
  &= O \left( 2^{-i/2} \left(\frac{k}{n}\right)^{\frac{1}{4}} \right). }
Summing over $i$ we get the claimed bound $O((k/n)^{1/4})$.
\end{proof}

\medbreak

\emph{Step 3. Coupling the set of descendants.}
We now complete the definition of the coupling of $T_\Lambda$ and $T$.
Fix a monotone coupling between $\widetilde{T}^*_\Lambda$ and $T^*_\Lambda$, such that $e^* \in \widetilde{T}^*_\Lambda \cap \sfE^*_\Lambda$ implies
$e^* \in T^*_\Lambda$; see \cite[Chapter 10]{LPbook}. Define $T_\Lambda$ and $T$ as the dual trees of $T^*_\Lambda$ and $T^*$.
This completes the definition of required coupling $\P_{\Lambda,k,\eps}$.

\begin{lemma}
\label{lem:contained} \ \\
(i) When $\widetilde{T}^*_\Lambda$ contains a block, we have 
$W_{k,\Lambda} \subset D_r$. \\
(ii) When $T^*$ contains a block, we have $W_k \subset D_r$. \\
(iii) When the event in Lemma \ref{lem:almost loop} occurs, we have
$W_{k,\Lambda} = W_k \subset D_r$ and $T$ and $T_\Lambda$
agree on the set of edges with at least one endvertex in $W_k$.
\end{lemma}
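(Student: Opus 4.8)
The plan is to show that a block in the relevant dual tree encloses $D_k$ so tightly that no edge of the corresponding primal tree can carry a descendant of $D_k$ out past $D_n$. Throughout I use planar duality in the form $e \in T_\Lambda \iff e^* \notin T^*_\Lambda$ (and likewise $e \in T \iff e^* \notin T^*$), so that an edge present in a dual tree is dual to an edge absent from the corresponding primal tree. The central deterministic ingredient, which I would isolate as an \emph{enclosure principle}, is this: if the dual of a primal spanning tree contains a connected set of edges $B$ whose vertex set separates $D^*_k$ from $(D^*_n)^c$ in the dual graph, then the descendants of $D_k$ in the primal tree lie in the bounded complementary region of $B$, hence in $D_n \subset D_r$. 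I would take $B = U \cup \gamma^*_r$; since $U \subset D^*_n \setminus D^*_k$ and, by condition (iii) in the definition of a block, the vertices of $B$ disconnect $D^*_k$ from $(D^*_n)^c$, the bounded region enclosed by $B$ lies inside $D_n$.

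The justification of the enclosure principle is where the work lies. Orient the primal tree towards its root ($s$ in the finite case, the unique end in the infinite case); this root lies outside $D_n$. A vertex $v$ is a descendant of $D_k$ exactly when its oriented path meets $D_k$. Were some $v \notin D_n$ a descendant, its oriented path would have to cross from the unbounded side of $B$ into the region enclosing $D_k$; by the standard $\Z^2$-versus-$(\Z^2)^*$ connectivity duality (primal rook-connectivity across a circuit is blocked precisely by a $*$-connected dual barrier), such a crossing must traverse a primal edge whose dual lies on $B$. But every edge of $B$ is present in the dual tree, so its primal dual is absent from the primal tree, a contradiction. Hence $W_k \subset D_r$ (resp.\ $W_{k,\Lambda} \subset D_r$). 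This proves (ii) directly, using $U \subset T^*$ together with the backbone $\gamma^*_r \subset T^*$. For (i), the edges of $U \cup \gamma^*_r$ lie in $\sfE^*_\Lambda$, so the monotone coupling $e^* \in \widetilde{T}^*_\Lambda \cap \sfE^*_\Lambda \Rightarrow e^* \in T^*_\Lambda$ of \cite[Chapter 10]{LPbook} places the whole barrier in $T^*_\Lambda$, the dual of $T_\Lambda$, and the enclosure principle applies verbatim.

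For part (iii) I would work on the event of Lemma \ref{lem:almost loop}, on which $\gamma^*_\Lambda \cap D^*_r = \gamma^* \cap D^*_r$ and the block $U$ is revealed from the shared stacks assigned to $D^*_r \setminus \gamma^*$. Thus $U \cup \gamma^*_r$ is simultaneously a block in $T^*$ and in $\widetilde{T}^*_\Lambda$, and (i), (ii) already give $W_k \subset D_r$ and $W_{k,\Lambda} \subset D_r$. To upgrade this to $W_k = W_{k,\Lambda}$ with agreement of $T$ and $T_\Lambda$ on every edge touching $W_k$, I would note that the enclosed region is explored by Wilson's algorithm using the \emph{same} stacks in both constructions, with the common $\gamma^*_r$ serving as the already-built part of the tree; hence $T^*$ and $\widetilde{T}^*_\Lambda$ coincide on all dual edges inside the enclosure, and via the monotone coupling so does $T^*_\Lambda$. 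Dually, $T$ and $T_\Lambda$ agree on all primal edges inside the enclosure, and since $W_k$ and $W_{k,\Lambda}$ are both read off from the primal tree restricted to this region, they are equal and the trees agree on every incident edge.

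The hard part will be making the enclosure principle fully rigorous: one must reconcile the fact that $B$ is a \emph{tree} (its edge set is acyclic and so cannot block every primal crossing of a circle) with the \emph{vertex}-separation asserted in condition (iii), which is exactly the point at which the $\Z^2/(\Z^2)^*$ connectivity duality, and the infinite/one-ended nature of the backbone, must be invoked with care. A second delicate point, needed only for (iii), is checking that the monotone coupling creates no discrepancy between $\widetilde{T}^*_\Lambda$ and $T^*_\Lambda$ \emph{inside} the enclosure, i.e.\ that any extra edges it introduces all sit on the unbounded side near the wired boundary $s^*$.
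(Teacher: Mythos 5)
Your overall route is the same as the paper's (pass to the dual, use the block as a barrier, transfer it to $T^*_\Lambda$ by the monotone coupling for (i), and use the shared stacks for (iii)), but the two points you defer to ``the hard part'' are exactly where the content lies, and the first of them is a genuine gap: the enclosure principle as you state it is false. A block $U \cup \gamma^*_r$ is a subtree of the dual tree, so as a planar set it is simply connected and has \emph{connected} complement; there is no ``bounded complementary region'' and no ``unbounded side''. Correspondingly, a single primal arm from $D_k$ to $(D_n)^c$ is \emph{not} blocked --- it cannot be, since $T$ is a spanning tree and its backbone from the origin to infinity is precisely such an arm. The vertex-separation in condition (iii) of the definition of a block does not upgrade to edge-blocking of single primal crossings: a primal edge of $T$ can pass between two adjacent barrier vertices through a dual edge that lies in $T^*$'s complement. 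What the paper actually uses is the \emph{two-arm} structure of a putative descendant: if $v \in D_r^c$ were a descendant of $D_k$, the path $\beta$ in the primal tree from $v$ to the sink (resp.\ to infinity) starts outside $D_r$, visits $D_k$, and ends outside $D_r$. This arc cuts the annulus into two sectors; the block has vertices in both sectors (this is where vertex-separation of $D^*_k$ from $(D^*_n)^c$ enters), it is connected, and no edge of it can cross $\beta$ since block edges lie in the dual tree while $\beta$ lies in the primal tree. That contradicts the connectivity of the block as a planar edge set. Your proposal never uses the second arm, so it cannot close this gap.

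The second point you flag --- that the monotone coupling between $\widetilde{T}^*_\Lambda$ and $T^*_\Lambda$ adds no edges inside the enclosure, which you need for (iii) --- also requires an argument, and the paper supplies one: any dual vertex interior to the block connects to $s^*$ in $\widetilde{T}^*_\Lambda$ by a path that, by vertex-separation, must first hit the block; routing through the (connected) block shows any two interior dual vertices are already joined inside $\widetilde{T}^*_\Lambda \cap \sfE^*_\Lambda$, so adding an interior edge in $T^*_\Lambda$ would create a cycle. With that, the trees $T$ and $T_\Lambda$ agree on the interior region, and (i)--(ii) localize both sets of descendants there, giving (iii). In short: right skeleton, but the two deterministic planarity facts you postpone are the proof, and the one you sketch in place of the first would not survive scrutiny.
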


\begin{proof}
(i) Since $T^*_\Lambda$ is stochastically larger than $\widetilde{T}^*_\Lambda$,
the edges in the block are also present in $T^*_\Lambda$. Since the the union 
of the block with $\gamma^*_r$ is connected, any two dual vertices in the interior 
of the block are connected by a path in $\widetilde{T}^*_\Lambda$. Hence no new 
edges are added in the interior of the block when passing from 
$\widetilde{T}^*_\Lambda$ to $T^*_\Lambda$.

Suppose that $D_k$ had a descendant $v \in D_r^c$ in $T_\Lambda$. Then there would 
be a primal path $\beta$ starting at $v$ that visits $D_k$ and ends outside $D_r$. 
Since the block surrounds $D_k$, this would contradict the connectivity of the block 
(as a set of edges). 

(ii) The same argument as in the previous paragraph applies here.

(iii) Since we are using the same stacks of arrows in $D^*_r \setminus \gamma^*_r$,
the same block exists in $\Lambda^*$ and in $(\Z^2)^*$, and the trees
coincide in the interior of the region defined by the block. Therefore, the trees 
$T_\Lambda$ and $T$ also coincide in this region. By parts (i) and (ii), 
the set of descendants are contained in this region
and are equal in $T_\Lambda$ and $T$.
\end{proof}

\medbreak

\begin{proof}[Proof of Proposition \ref{tree d=2}]
By Lemma \ref{lem:contained} we have $W_{k,\Lambda}=W_k$ if the event in Lemma \ref{lem:almost loop} occurred which in turn assumed that the event in Lemma \ref{backbone constant} did not occur. Therefore we have
\begin{align*}
 &\P \left( \text{$W_k \neq W_{k,\Lambda}$ or $T$ and $T_\Lambda$ differ on some edge
       with an endvertex in $W_k$} \right) \\
 &\qquad \leq C \frac{\lambda^2 C(\eps)^2 R^{\frac{5}{2}+2\eps}}{N} \ln\left(\frac{N}{\lambda C(\eps) R^{5/4+\eps}}\right)
          + C_1 \exp (-C_2 \lambda) +  C \frac{r}{R}
          +  C \left(\frac{k}{n}\right)^{1/4} + C \frac{n}{r}. \end{align*}
We can now optimise our choice of parameters by taking $n=(kr^4)^{1/5}$, $r=(R^5k)^{1/6}$, $R=(k N^{6})^{1/16}$ and choose $\lambda$ such that $\lambda^2 R^{2\eps}=N^\eps$.
\end{proof}

\medbreak

\textbf{Acknowledgements.} We are grateful to an anonymous referee for useful suggestions.

\medbreak


\begin{thebibliography}{2}

\bibitem{ABNW(99)} Aizenman, M.; Burcahrd, A.; Newman, C. M.; Wilson, D. B. 
Scaling limits for minimal and random spanning trees in two dimensions. 
Random Structures Algorithms \textbf{15} (1999), no.~3-4, 319--367.

\bibitem{Athreya(04)}
Athreya, S. R.; J\'arai, A. A. Infinite volume limit for the stationary distribution of abelian sandpile models.
Comm. Math. Phys. \textbf{249} (2004), no. 1, 197--213.

\bibitem{Bak(87)}
Bak, P. ; Tang, C.; Wiesenfeld, K.
Self-organized criticality: An explanation of the 1/f noise.
Physics Review Letters \textbf{59} (1987), 381--384

\bibitem{Barlow(10)}
Barlow, M. T.; Masson, R.
 Exponential tail bounds for loop-erased random walk in two dimensions.
 Ann. Probab. \textbf{38} (2010), no. 6, 2379--2417.

\bibitem{Barlow(11)}
Barlow, M. T.; Masson, R.
 Spectral dimension and random walks on the two dimensional uniform spanning tree.
 Comm. Math. Phys. \textbf{305} (2011), no. 1, 23--57.

\bibitem{Benj(00)} Benjamini, I. Large scale degrees and the number of spanning clusters for the uniform spanning tree, in {\it Perplexing problems in probability}, 175--183, Progr. Probab., 44, Birkh\"auser, Boston, Boston, MA (2000).

\bibitem{BLPS(01)}
Benjamini, I.; Lyons, R.; Peres, Y. and Schramm, O. Uniform spanning forests. Ann. Probab. \textbf{29} (2001), no.~1, 1--65.

\bibitem{Dhar(90)}
Dhar, D.
 Self-organized critical state of sandpile automaton models. Phys. Rev. Lett. \textbf{64} (1990), no. 14, 1613--1616.

\bibitem{Dhar(06)}
Dhar, D.
 Theoretical studies of self-organized criticality.
 Phys. A \textbf{369} (2006), no.~1, 29--70.

\bibitem{FM(92)} Feder, T. and Mihail, M.
Balanced matroids. In Proceedings of the Twenty-Fourth Annual ACM Symposium on
Theory of Computing, pages 26--38, New York. Association for Computing Machinery
(ACM). Held in Victoria, BC, Canada (1992).

\bibitem{Holroyd(08)}
Holroyd, A. E.; Levine, L.; M\'{e}sz\'{a}ros, K.; Peres, Y.; Propp, J.; Wilson, D. B.
 Chip-firing and rotor-routing on directed graphs. In and out of equilibrium. 2, 331--364,
 Progr. Probab., \textbf{60}, Birkh\"{a}user, Basel, (2008).

\bibitem{Jarai(11)}
J\'{a}rai, A.A.
Rate of convergence estimates for the zero dissipation limit in Abelian sandpiles.
Preprint (2011), \hfil\break {\tt http://arxiv.org/abs/1101.1437}.

\bibitem{Jarai(14)}
J\'{a}rai, A.A.
 Sandpile models. Preprint (2014), \hfil\break
 {\tt http://arxiv.org/abs/1401.0354}.

\bibitem{JL(07)}
J\'{a}rai, A.A. and Lyons, R.
 Ladder sandpiles. Markov Process. Related Fields \textbf{13} (2007), no.~3, 493--518.

\bibitem{JW(12)} J\'arai, A.A.; Werning, N.
 Minimal configurations and sandpile measures. 
 J. Theoret. Probab. {\bf 27} (2014), no.~1, 153--167. 

\bibitem{JR(08)}
J\'{a}rai, A.A. and Redig, F. 
Infinite volume limit of the abelian sandpile model in dimensions $d\geq 3$. 
Probab. Theory Related Fields \textbf{141} (2008), no.~1--2, 181--212.

\bibitem{JRS(14)}
J\'{a}rai, A.A.; Redig, F. and Saada, E.
Approaching criticality via the zero dissipation limit in the abelian avalanche model.
Preprint (2014), \hfil\break
{\tt http://arxiv.org/abs/0906.3128}.

\bibitem{JPR(06)}
Jeng, M.; Piroux, G. and Ruelle, P.
Height variables in the Abelian sandpile model: scaling fields and correlations.
J. Stat. Mech. Theory Exp. (2006), no.~10, P10015.

\bibitem{Kenyon(00)}
Kenyon, R.
 The asymptotic determinant of the discrete Laplacian.
 Acta Math. \textbf{185} (2000), no.~2, 239--286.

\bibitem{Lawler}
Lawler, G. F.
Intersections of random walks. Reprint of the 1996 edition. Modern Birkh\"{a}user Classics. Birkh\"{a}user/Springer, New York, (2013).

\bibitem{LLbook}
Lawler, G. F.; Limic, V.
Random walk: a modern introduction.
Cambridge Studies in Advanced Mathematics \textbf{123}. Cambridge University Press, Cambridge, (2010).

\bibitem{LMS(08)}
Lyons, R.; Morris, B. J.; Schramm, O.
 Ends in uniform spanning forests.
 Electron. J. Probab. \textbf{13} (2008), no.~58, 1702--1725.

\bibitem{LPbook}
Lyons, R. {\rm with} Peres, Y.
{\em Probability on Trees and Networks}. Cambridge University Press. In preparation (2014). Current
version available at \hfil\break {\tt http://mypage.iu.edu/\string~rdlyons/}.

\bibitem{MD(91)}
Majumdar, S.N. and Dhar, D.
Height correlations in the Abelian sandpile model.
J. Phys. A \textbf{24} (1991), no.~7, L357--L362.

\bibitem{MD(92)}
Majumdar, S.N.; Dhar, D.
Equivalence between the Abelian sandpile model and the $q \to 0$ limit of the Potts model.
J. Phys. A \textbf{185} (1991), 129--145.

\bibitem{Masson(09)}
Masson, R.
The growth exponent for planar loop-erased random walk.
Electron. J. Probab. \textbf{14} (2009), no. 36, 10121073.

\bibitem{Mor(03)}
Morris, B.J.
The components of the wired spanning forest are recurrent.
Probab. Theory Related Fields \textbf{125} (2003), 259--265.

\bibitem{Pem(91)}
Pemantle, R. 
Choosing a spanning tree for the integer lattice uniformly. 
Ann. Probab. \textbf{19} (1991), no.~4, 1559--1574.

\bibitem{Pr(94)}
Priezzhev, V.B.
Structure of two-dimensional sandpile. I. Height probabilities.
J. Stat. Phys. \textbf{74} (1994), no.~5--6, 955--979.

\bibitem{Redig(06)}
Redig, F.
 Mathematical aspects of the abelian sandpile model, 
 in {\it Mathematical statistical physics}, 657--729, 
 Elsevier B. V., Amsterdam, (2006).

\bibitem{SV(09)}
Schmidt, K. and Verbitskiy, E. 
Abelian sandpiles and the harmonic model. 
Comm. Math. Phys. \textbf{292} (2009), no.~3, 721--759.

\bibitem{W(96)} Wilson, D.B. (1996):
Generating random spanning trees more quickly than the cover time. 
Proceedings of the Twenty-eighth Annual ACM Symposium on the 
Theory of Computing (Philadelphia, PA, 1996), 296--303, ACM, New York.

\end{thebibliography}
\end{document}